\newtheorem{assumption}{Assumption}
\newtheorem{theorem}{Theorem}[section]
\newtheorem{lemma}[theorem]{Lemma}
\newtheorem{remark}{Remark}[section]
\theoremstyle{definition}
\newtheorem{definition}{Definition}[section]
\newtheorem{prop}{Proposition}[section]
\newcommand{\R}{\mathbb{R}}
\newcommand{\om}{{\Omega_d}}
\newcommand{\omq}{{\Omega_q}}
\newcommand{\B}{\mathbb{B}}
\newcommand{\Uoq}{U^{(q)}_0}
\newcommand{\tmu}{\tilde{\mathbf{u}}}
\newcommand{\tu}{\tilde{{u}}}
\newcommand{\cha}{\mathbb{1}_{\Omega_d}}
\newcommand{\chaq}{\mathbb{1}_{\Omega_q}}
\newcommand{\out}{\mathbb{1}_{\mathbb{R}^m\setminus\Omega_d}}
\newcommand{\gdag}{{\gamma^\dagger}}
\newcommand{\bydef}{\stackrel{\mbox{\tiny\textnormal{\raisebox{0ex}[0ex][0ex]{def}}}}{=}}
\title{Stability analysis for localized solutions in PDEs and nonlocal equations on $\R^m$}
\author{
Matthieu Cadiot
\footnote{McGill University, Department of Mathematics and Statistics, 805 Sherbrooke Street West, Montreal, QC, H3A 0A9, Canada. {\tt matthieu.cadiot@mail.mcgill.ca}}
}
\date{}
\begin{document}

\maketitle

\begin{abstract}
In this paper, we present a general methodology for investigating the linear stability of localized solutions in PDEs and nonlocal equations on $\R^m$. More specifically, we control the spectrum of the Jacobian $D\mathbb{F}(\tilde{u})$ at a localized solution $\tilde{u}$, enclosing both the eigenvalues and the essential spectrum. Our approach is computer-assisted and is based on a controlled approximation of $D\mathbb{F}(\tilde{u})$ by its Fourier coefficients counterpart on a bounded domain $\om = (-d,d)^m$. We first control the spectrum of the Fourier coefficients operator combining a pseudo-diagonalization and a generalized Gershgorin disk theorem. Then, deriving explicit estimates between the problem on $\om$ and the one on $\R^m$, we construct disks in the complex plane enclosing the eigenvalues of $D\mathbb{F}(\tilde{u})$. Using computer-assisted analysis, the localization of the spectrum is made rigorous and fully explicit. We present applications to the establishment of stability for localized solutions in the planar Swift-Hohenberg PDE, in the planar Gray-Scott model and in the capillary-gravity Whitham equation.
\end{abstract}

\begin{center}
{\bf \small Key words.} 
{ \small Stability analysis, Spectrum of differential operators, Unbounded domains, Gershgorin theorem, Fourier analysis}
\end{center}

\begin{center}
{\bf \small Mathematics Subject Classification (2020)}  \\ \vspace{.05cm}
{\small 	35B35 $\cdot$ 47F05 $\cdot$ 35P05 $\cdot$ 35P15 $\cdot$ 35K58} 
\end{center}

% \tableofcontents
% \newpage

\section{Introduction}\label{sec : introduction}

In this paper, we present a general computer-assisted methodology for the study of linear stability  in partial differential equations (PDEs) and nonlocal equations on $\R^m$. More specifically, we consider a class of autonomous equations given as  
\begin{equation}\label{eq : equation intro}
    \mathbb{F}(u) = 0, ~~ \text{ where } ~  \mathbb{F}(u) = \mathbb{L} u + \mathbb{G}(u),
\end{equation}
and $u : \R^m \to \R$ satisfies $u(x) \to 0$ as $|x|_2 \to \infty$. $\mathbb{L}$ is the linear part of $\mathbb{F}$ and $\mathbb{G}$ is the nonlinear counterpart satisfying $\mathbb{G}(0) = 0$ and $D\mathbb{G}(0)=0$. In particular, we assume that $\mathbb{L}$ is a Fourier multiplier operator, that is it is given by its symbol $l : \R^m \to \mathbb{C}$ as 
\begin{align}\label{eq : definition of l}
    \mathcal{F}\left(\mathbb{L}u\right)(\xi) = l(\xi)  \mathcal{F}({u})(\xi) 
\end{align}
for all $\xi \in \R^m$, where $\mathcal{F}$ is the Fourier transform defined in Section \ref{ssec : notation and main assumption}. If $l$ is polynomial, then $\mathbb{L}$ is a linear differential operator with constant coefficients. Along this paper, we will denote by \emph{localized solutions} the solutions to \eqref{eq : equation intro} vanishing at infinity. Given a smooth zero $\tilde{u} : \R^m \to \R$ of $\mathbb{F}$, we are interested in controlling the spectrum of $D\mathbb{F}(\tilde{u})$, denoted as $\sigma\left(D\mathbb{F}(\tilde{u})\right)$. 

Under  Assumptions \ref{ass:A(1)} and \ref{ass : LinvG in L1} given below, one can decompose the spectrum as $\sigma\left(D\mathbb{F}(\tilde{u})\right) = \sigma_{ess}\left(D\mathbb{F}(\tilde{u})\right) \cup \text{Eig}\left(D\mathbb{F}(\tilde{u})\right)$, where $\sigma_{ess}\left(D\mathbb{F}(\tilde{u})\right)$ amounts for the essential spectrum and $\text{Eig}\left(D\mathbb{F}(\tilde{u})\right)$ the eigenvalues. In particular, under the aforementioned assumptions, we classically prove that 
\begin{equation}\label{eq : essential spectrum}
    \sigma_{ess}\left(D\mathbb{F}(\tilde{u})\right) = \{l(\xi), ~ \xi \in \R^m\},
\end{equation}
that is the essential spectrum is given by the range of the symbol $l$. In this context, the essential spectrum is easily determined and the stability analysis boils down to controlling   $\text{Eig}\left(D\mathbb{F}(\tilde{u})\right)$.

 Such a problematic is of course at the core of linear stability investigations. For instance, it naturally arises in parabolic PDEs of the form
\begin{equation}\label{eq : parabolic intro}
    \partial_t u = \mathbb{L}u + \mathbb{G}(u),
\end{equation}
where, in this case, $\tilde{u}$ is a localized stationary solution to \eqref{eq : parabolic intro}. One other major application of interest is the linear stability (sometimes called spectral stability in this context) of localized traveling waves, also denoted \emph{solitary waves}. Linear stability is essential in understanding the local dynamics surrounding localized stationary solutions and traveling waves. In particular, this field has been abundantly prolific, leading to the development of numerous analytic and numerical approaches. We refer the interested reader to the following non-exhaustive references, and the references therein \cite{ beck_2020, beck_2018, bramburger_2025, kapitula_2013,  sandstede_2019, sandstede_2002, scheel_2023}. 

In general, accessing the spectrum of $D\mathbb{F}(\tilde{u})$ is an highly non-trivial task, since the solution $\tilde{u}$ might  not be known explicitly. This is the case for instance when the existence of $\tilde{u}$ is established as a minimization process (cf. \cite{albert_concentration_compactness, arnesen_existence_solitary,  lions_1984, lions_1984_2} for instance). In this context, the field of computer-assisted proofs (CAPs) becomes a faithful ally. Indeed, CAPs can help establish the existence of $\tilde{u}$ constructively, in the sense that one might prove the existence of $\tilde{u}$ in a vicinity of an approximate solution $u_0$, which is known explicitly (and usually constructed numerically). Then, the use of rigorous numerics might allow to enclose rigorously the spectrum of $D\mathbb{F}(\tilde{u})$, when combined with an adequate analysis. In the next subsection, we present the various developments in CAPs concerning stability problems in differential equations.

\subsection{A computer-assisted approach}

In the last decades, CAPs have become a central tool for the study of PDEs. From the proof of the Feigenbaum conjectures \cite{MR648529}, to the existence of chaos
and global attractor in the Lorenz equations \cite{MR1276767,MR1701385,MR1870856}, the proof of 
 Wright's conjecture \cite{MR3779642},
chaos in the Kuramoto-Sivashinsky PDE \cite{MR4113209}, blowup in 3D
Euler \cite{MR4846737} and imploding solutions for 3D compressible
fluids \cite{MR4862915}, the role of CAPs has become increasingly present in the analysis of ordinary differential equations,
dynamical systems and PDEs.
We refer the interested reader to the following review papers \cite{ gomez_cap,  koch_computer_assisted, nakao_numerical, jb_rigorous_dynmamics} and the book \cite{plum2019numerical} for additional details.

In particular, concerning stability problems, CAPs have known strong advances in multiple directions. As noted above, the strength of a computer-aided approach lies in the fact that a constructive existence proof for $\tilde{u}$ can be established in a vicinity of a numerical approximate solution $u_0$. In practice, one obtains the existence of an explicit $r_0>0$ such that $\|\tilde{u} - u_0\| \leq r_0$, for a well-chosen norm $\|\cdot\|$. Such a result, usually arises from the application of a fixed point argument in a neighborhood of $u_0$ (see \cite{unbounded_domain_cadiot} for instance). This (tight) control on the solution $\tilde{u}$ allows to establish a posteriori stability results on $D\mathbb{F}(\tilde{u})$.\\
~~ In particular, Plum et al.   \cite{Plum2000eigenvalue_domain_decomp,  plum2019numerical, plum1990eigenvalue_homotopy, plum1991bounds, plum_wata2016norm, plum_2009}  have develop a strong framework for the control of the spectrum in PDEs. Their methodology relies on classical Sobolev estimates, combined with various homotopy techniques. In particular, using a domain decomposition homotopy,  this approach allows to treat unbounded domains. This is illustrated in \cite{plum_2004, plum_2009}, in which the  Orr-Sommerfeld equation is treated on the real line. Other applications on the same framework on the real line are given in \cite{breden_2025_bif, breden_2025}. Then, using a finite-elements approach, Liu et al. in \cite{liu_self_adjoint,liu_inverse} provided a general approach for controlling the spectrum in elliptic PDEs on bounded domains. In particular, this approach allows to control the spectral gap of $D\mathbb{F}(\tilde{u})$, an essential step when developing a Banach fixed point approach. Using spectral techniques and Fourier analysis, eigencouples of $D\mathbb{F}(\tilde{u})$  can be established as isolated fixed points of a well-suited zero finding problem, see \cite{breden_2018} for an illustration on a bounded domain and \cite{cadiot2024constructiveproofsexistencestability, unbounded_domain_cadiot} for illustrations of stability of solitary waves on  the real line. \\
Furthermore, the spectrum of $D\mathbb{F}(\tilde{u})$ might also be controlled by using different techniques than the direct computation of the spectrum. Indeed, Beck et al. in \cite{ beck_2020, beck_2018, beck_2022, beck_2024} established a general framework for stability analysis in 1D PDEs thanks to the Maslov index. In particular, such a set-up is compatible with rigorous numerics, as illustrated in \cite{beck_2022} by the rigorous computation of conjugate points  for  one dimensional Sturm-Liouville operators. The number of conjugate points corresponding to the number of unstable direction, one is then able to conclude about linear stability. Finally, Barker et al. in \cite{barker_2024, barker_2016} provided a computer-assisted framework for the construction of an Evans function. Then, the zeros of the Evans function (which can be enclosed via rigorous numerics) provide the localization and multiplicity of the eigenvalues, leading to a spectral analysis.

In the above, we presented various computer-aided approaches allowing the computation of the spectrum of $D\mathbb{F}(\tilde{u})$. One could separate the ``global" methods, as \cite{beck_2022} and \cite{barker_2016}, for which a full control of the unstable directions is obtained at once, to the ``local" methods,   as in \cite{plum_2004} or \cite{cadiot2024constructiveproofsexistencestability, unbounded_domain_cadiot}, in which isolated eigenvalues can be provided. However, the former is, as of now, restricted to one dimensional problems. On the other hand, the latter can be applied to higher dimensional problems, but requires the computation of regions of non-existence of the spectrum in order to conclude about stability.  In other words, one has to provide regions for $\lambda$ in which $D\mathbb{F}(\tilde{u}) - \lambda I$ is invertible. In practice, this can be  numerically very expensive and becomes challenging in higher dimensional problems. Moreover, such a framework is adequate when dealing with simple eigenvalues (which can be established thanks to a fixed point argument), but does not readily apply when dealing with eigenvalues with higher multiplicity. Such hurdles are at the core of this paper, and ask for the development of a Gershgorin type of approach.

\subsection{Localization of the eigenvalues}

For finite dimensional problems, the Gershgorin theorem provides a global enclosure of the spectrum of a matrix using its diagonal entries. Such a result has been used multiple times in CAPs, see \cite{gomez_2021, lessard_2025,  rump_2020, jb_2021} for instance. Generalizations of the Gershgorin theorem to infinite matrices are available, under an additional assumption of compactness (see \cite{FARID19917}). In particular, it was recently used and generalized in the context of CAP in \cite{breden2025turinginstabilitynonlocalheterogeneous} for establishing Turing instability in a reaction-diffusion system. Under very broad assumptions, the authors were able to derive a generalized version of the Gershgorin theorem, applicable whenever a linear operator is expressed on an adequate Schauder basis. In particular, we will show that similar ideas allow to  control of the spectrum of the Jacobian of a PDE with periodic boundary conditions. This is the starting point of our approach, which we expose in Section \ref{sec : control spectrum DF fourier}. 

More precisely, given $q >0$, \eqref{eq : equation intro} has a Fourier coefficients counterpart when expressed on the hypercube $\omq \bydef (-q,q)^m$ with periodic boundary conditions. In fact, the operator $\mathbb{L}$ becomes an infinite diagonal matrix $L_q$ with entries $\left(l(\frac{1}{2q}n)\right)_{n \in \mathbb{Z}^m}$ on the diagonal. Concerning the nonlinear part $\mathbb{G}(u)$, it becomes a nonlinear operator $G_q(U)$, where products $uv$ are turned into discrete convolutions $U*V$ given as 
\[
(U*V)_n = \sum_{k \in \mathbb{Z}^m} U_k V_{n-k}.
\]
Consequently, we obtain a periodic boundary value problem counterpart given as 
\begin{equation}\label{eq : periodic equation intro}
    F_q(U) = 0, ~~ \text{ where } F_q(U) = L_qU + G_q(U)
\end{equation}
and $U$ is a sequence of Fourier coefficients, indexed on $\mathbb{Z}^m$. More details on such Fourier coefficients operators are given in Section \ref{ssec : fourier coefficients counterpart}. As illustrated in \cite{unbounded_domain_cadiot, sh_cadiot, cadiot20242dgrayscottequationsconstructive, cadiot2024constructiveproofsexistencestability}, we expect that localized solutions to \eqref{eq : equation intro} can be approximated thanks to solutions to \eqref{eq : periodic equation intro} if $q$ is big enough. Under such a perspective, we fix $d>0$ and, given an approximate solution $U_0$ to \eqref{eq : periodic equation intro} with $q=d$, we first want to control the spectrum of $DF_d(U_0)$. Under some assumptions on $\mathbb{L}$ and $\mathbb{G}$ given below (cf. Assumptions \ref{ass:A(1)} and \ref{ass : LinvG in L1}), we address this problem in Section \ref{sec : control spectrum DF fourier} thanks to a pseudo-diagonalization of $DF_d(U_0)$ and a generalized Gershgorin theorem.

 At this point, our objective becomes to relate the spectrum of $DF_d(U_0)$ to the one of $D\mathbb{F}(\tilde{u})$, where $U_0$ and $d$ are chosen appropriately. We tackle this problem using the framework developed in \cite{unbounded_domain_cadiot} for PDEs on $\R^m$, which  has recently been  extended  to nonlocal equations in \cite{cadiot2024constructiveproofsexistencestability} and  to systems of PDEs in \cite{cadiot20242dgrayscottequationsconstructive}. In particular, given an hypercube $\om= (-d,d)^m$ with $d$ big enough, we construct an approximate solution $u_0$  thanks to a Fourier series on $\om$ given as 
\begin{equation}\label{eq : approximate solution intro}
    u_0(x) = \mathbb{1}_{\om}(x)\sum_{n \in \mathbb{Z}^m} (U_0)_n e^{\frac{i\pi}{d}n\cdot x}
\end{equation}
for all $x \in \mathbb{R}^m$, where $\mathbb{1}_{\om}$ is the characteristic function on $\om.$ In particular, $U_0 = (U_0)_n$ is a finite dimensional sequence, obtained numerically. Using \cite{unbounded_domain_cadiot}, we are able to prove, after some computer-assisted analysis and the rigorous utilization of a fixed-point argument, that there exists $r_0>0$ and a true solution $\tilde{u} \in \mathcal{H}$ to \eqref{eq : equation intro} such that 
\begin{equation}
    \|\tilde{u} - u_0\|_{\mathcal{H}} \leq r_0,
\end{equation}
where the Hilbert space $\mathcal{H}$ and its norm $\|\cdot\|_{\mathcal{H}}$ are given in \eqref{eq : hilbert space H}. In particular, using the analysis of \cite{unbounded_domain_cadiot}, $r_0$ is known explicitly. Summarizing, given $U_0$ chosen numerically, we are able to construct an approximate solution $u_0$ to \eqref{eq : equation intro} and of the form \eqref{eq : approximate solution intro}. Then, under some explicit bound computations given in \cite{unbounded_domain_cadiot}, we are able to prove the existence of a true solution $\tilde{u}$ in a vicinity of $u_0$, controlled by the radius $r_0$. In particular, the analysis derived in \cite{unbounded_domain_cadiot}, allows to bridge the periodic boundary value problem \eqref{eq : periodic equation intro} to the problem on $\R^m$ \eqref{eq : equation intro}. 

In the present work, we leverage such an analysis and provide a localization of the eigenvalues of $D\mathbb{F}(\tilde{u})$ away from the essential spectrum. More specifically, given $\delta >0$,  we define $\sigma_\delta$ as
\begin{align}\label{def : sigma delta intro}
    \sigma_\delta \bydef \left\{\lambda \in \mathbb{C},~ |l(\xi) - \lambda| > \delta ~\text{ for all } \xi \in \R^m\right\}.
\end{align}
Then $\sigma_\delta \cap \text{Eig}(D\mathbb{F}(\tilde{u}))$ corresponds to the eigenvalues of $D\mathbb{F}(\tilde{u})$ which are $\delta$ away from the essential spectrum (cf. \eqref{eq : essential spectrum}). In particular, given $\delta>0$, we want to localize $\sigma_\delta \cap \text{Eig}(D\mathbb{F}(\tilde{u}))$ using $\sigma_\delta \cap \text{Eig}(DF_d(U_0))$. For this purpose, as hinted in Section 4 of \cite{kato2013perturbation}, we consider a closed Jordan domain $\mathcal{J} \subset \overline{\sigma_\delta}$, that is the closed interior of a Jordan curve. 
Using a first homotopy with the parameter $q$ in \eqref{eq : periodic equation intro} and a second one between $u_0$ and $\tilde{u}$, we obtain the following result, given here in a simplified statement (see Theorem \ref{th : gershgorin unbounded} for the complete result).
\begin{theorem}\label{th : intro}
Denote $(\lambda_n)_{n \in \mathbb{Z}^m}$ the eigenvalues of $DF_d(U_0)$.
   Let $\delta>0$ and let $\mathcal{J}$ be a closed Jordan domain such that $\mathcal{J} \subset \overline{\sigma_\delta}$.  Then  there exists a positive sequence $(\epsilon_n)_{n \in \mathbb{Z}^m}$, depending only on  $\mathcal{J}$, $U_0$ and $d$ such that we have the following :
    Let  $k \in \mathbb{N}$ and $I \subset \mathbb{Z}^m$ such that  $|I| =k$. If $\cup_{n \in {I}} B_{\epsilon_n}(\lambda_n) \subset \mathcal{J}$ and $\left(\cup_{n \in {I}} \overline{B_{\epsilon_n}(\lambda_n)}\right) \bigcap\left( \cup_{n \in \mathbb{Z}^m \setminus {I}} \overline{B_{\epsilon_n}(\lambda_n)}\right) = \varnothing$, then there are exactly $k$ eigenvalues of $D\mathbb{F}(\tilde{u})$ in $\cup_{n \in {I}} \overline{B_{\epsilon_n}(\lambda_n)} \subset \mathcal{J}$ counted with multiplicity.
\end{theorem}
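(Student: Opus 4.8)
The plan is to express the number of eigenvalues of $D\mathbb{F}(\tilde u)$ lying in a given region as the rank of a Riesz spectral projection, and to track this rank along two homotopies — one in the period $q$ of \eqref{eq : periodic equation intro} and one from $u_0$ to $\tilde u$ — using the $\om$-versus-$\R^m$ estimates of \cite{unbounded_domain_cadiot} together with the a posteriori bound $\|\tilde u-u_0\|_{\mathcal H}\le r_0$ to guarantee that no eigenvalue escapes its prescribed disk. Fix $\delta>0$ and a closed Jordan domain $\mathcal J\subset\overline{\sigma_\delta}$ with boundary curve $\gamma=\partial\mathcal J$. For a closed operator $A$ on $\mathcal H$ (or on the Fourier-coefficient space) with $\gamma\cap\sigma(A)=\varnothing$, set $P_\gamma(A)\bydef\frac{1}{2\pi i}\oint_\gamma(\lambda I-A)^{-1}\,d\lambda$. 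Since $\mathcal J\subset\overline{\sigma_\delta}$ lies at distance at least $\delta$ from the essential spectrum $\{l(\xi):\xi\in\R^m\}$ (recall \eqref{eq : essential spectrum}), Assumptions \ref{ass:A(1)} and \ref{ass : LinvG in L1} ensure that $\sigma(D\mathbb F(\tilde u))\cap\overline{\sigma_\delta}$ consists only of eigenvalues of finite algebraic multiplicity and that $\dim\mathrm{ran}\,P_\gamma(D\mathbb F(\tilde u))$ equals their total number inside $\mathcal J$ counted with multiplicity; the analogous statement holds for each periodic Jacobian $DF_q(\cdot)$, with $\{l(\xi)\}$ replaced by its samples $\{l(\tfrac{1}{2q}n):n\in\Z^m\}$, which stay at distance at least $\delta$ from $\gamma$ uniformly in $q$. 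Under the hypotheses of the theorem, $\mathcal J_I\bydef\cup_{n\in I}\overline{B_{\epsilon_n}(\lambda_n)}$ is a compact subset of the interior of $\mathcal J$ disjoint from $\cup_{n\notin I}\overline{B_{\epsilon_n}(\lambda_n)}$, so one can choose a rectifiable Jordan curve $\gamma_I$ inside $\mathcal J$, avoiding every disk $\overline{B_{\epsilon_n}(\lambda_n)}$, that encloses exactly $\mathcal J_I$; the claim then reduces to $\dim\mathrm{ran}\,P_{\gamma_I}(D\mathbb F(\tilde u))=k$.

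I would build the radius $\epsilon_n$ as a sum of three explicit contributions, all depending only on $\mathcal J$ (through a uniform bound on $\|(\lambda I-\cdot)^{-1}\|$ along the relevant curves), on $U_0$, and on $d$: (i) the generalized Gershgorin radius $\epsilon_n^{(1)}$ coming from the pseudo-diagonalization of $DF_d(U_0)$ of Section \ref{sec : control spectrum DF fourier}, so that $\sigma(DF_d(U_0))\subset\cup_n B_{\epsilon_n^{(1)}}(\lambda_n)$ with the matching count on isolated sub-unions, via the infinite-dimensional Gershgorin theorem of \cite{FARID19917, breden2025turinginstabilitynonlocalheterogeneous}; (ii) a bound $\epsilon_n^{(2)}$ on the spectral drift along the homotopy $q\mapsto DF_q$, $q\ge d$, interpolating between the periodic problem on $\om$ and the problem on $\R^m$ at $u_0$, supplied by the explicit $\om$-to-$\R^m$ estimates of \cite{unbounded_domain_cadiot}; and (iii) a bound $\epsilon_n^{(3)}$, proportional to $r_0$ (which is itself determined by $U_0$ and $d$), on the drift along the linear homotopy $t\mapsto D\mathbb F(u_0+t(\tilde u-u_0))$, $t\in[0,1]$, using $\|\tilde u-u_0\|_{\mathcal H}\le r_0$, the boundedness of $D\mathbb G$ from $\mathcal H$ into the relevant space, and the uniform resolvent bound. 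The decay in $|n|$ of the off-diagonal terms, of the $\om$-to-$\R^m$ error, and of the resolvent weights forces $\epsilon_n\to0$ and the $\lambda_n$ away from the bounded set $\mathcal J$, so only finitely many disks meet $\mathcal J$ and the curve $\gamma_I$ above genuinely exists.

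The engine of the proof is the homotopy invariance of the projection rank: if $(A_t)_{t\in[0,1]}$ is a family of closed operators, continuous in the norm-resolvent sense, with $\gamma_I\cap\sigma(A_t)=\varnothing$ for all $t$, then $\dim\mathrm{ran}\,P_{\gamma_I}(A_t)$ is independent of $t$. I would verify the no-crossing hypothesis by the elementary perturbative criterion: whenever $\sup_{\lambda\in\gamma_I}\|(\lambda I-A)^{-1}\|\le C$ and $\|B-A\|<C^{-1}$, one has $\gamma_I\cap\sigma(B)=\varnothing$ and $P_{\gamma_I}(A)$, $P_{\gamma_I}(B)$ of equal rank; subdividing each homotopy into steps of size below $C^{-1}$ chains the equalities. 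Applying this along homotopy (ii) transports the Gershgorin count from $DF_d(U_0)$ to $D\mathbb F(u_0)$, and along homotopy (iii) from $D\mathbb F(u_0)$ to $D\mathbb F(\tilde u)$; by the definition of the $\epsilon_n$, throughout both homotopies every eigenvalue stays inside its disk $\overline{B_{\epsilon_n}(\lambda_n)}$, so $\gamma_I$ is never met and $\mathcal J_I$ keeps its total eigenvalue count $k$, which is the assertion.

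The main obstacle is not the abstract index bookkeeping but the uniform resolvent control $\sup_t\sup_{\lambda\in\gamma}\|(\lambda I-A_t)^{-1}\|<\infty$ together with norm-resolvent continuity of $t\mapsto A_t$ across the change of functional setting — from the Fourier-coefficient operator $DF_q(\cdot)$ on sequences over $\Z^m$ to $D\mathbb F(\cdot)$ on $\mathcal H\subset L^2(\R^m)$. This is precisely where the framework of \cite{unbounded_domain_cadiot} must be exploited and extended: one has to compare, with an error that is explicit and vanishes as $q\to\infty$, the resolvent of the periodic Jacobian on $\om$ with that of the $\R^m$ Jacobian, uniformly for $\lambda$ in a compact subset of $\sigma_\delta$ — which is possible exactly because $\gamma\subset\overline{\sigma_\delta}$ keeps a distance $\delta$ from $\{l(\xi)\}$ and from all its periodic samples, so the hypothesis $\mathcal J\subset\overline{\sigma_\delta}$ cannot be dropped. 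Making every constant above — the resolvent bounds, the decay rates in $|n|$, the $q$-dependence of the $\om$-to-$\R^m$ error — fully explicit, so that the $\epsilon_n$ are computable, is what renders the statement computer-assisted; this is the substance of the proof of Theorem \ref{th : gershgorin unbounded}.
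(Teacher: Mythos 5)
Your proposal is correct and follows essentially the same route as the paper's proof of Theorem~\ref{th : gershgorin unbounded}: pseudo-diagonalize $DF_d(U_0)$ and apply the generalized Gershgorin theorem of \cite{FARID19917}, then build one homotopy in the period $q$ from $DF_d(U_0)$ to $D\mathbb{F}(u_0)$ and a second from $D\mathbb{F}(u_0)$ to $D\mathbb{F}(\tilde u)$, tracking eigenvalue counts via continuity of Riesz spectral projections (the paper cites Chapter~4.3 of \cite{kato2013perturbation}, which is exactly this machinery), with the $\ell^2$-to-$L^2$ comparison handled through the $\Gamma^\dagger_q$ embeddings and the explicit estimates of \cite{unbounded_domain_cadiot}. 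The only cosmetic difference is that you assemble $\epsilon_n$ additively from three drift contributions, whereas the paper takes $\epsilon_n=\max\{\sup_q\epsilon_n^{(q)},\epsilon_n^{(\infty)}\}$ with the Gershgorin radii already absorbed into the bounds of Lemmas~\ref{lem : link of spectrum unbounded to U0}--\ref{lem : link of spectrum Fourier coeff in q}; both express the same requirement that each disk contain the relevant eigenvalues of every operator along the homotopy.
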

First, note that in the complete statement of Theorem \ref{th : gershgorin unbounded}, we use the pseudo-diagonalization of $DF_d(U_0)$ instead of $DF_d(U_0)$. This has the advantage of providing an explicit expression for the values of  $(\epsilon_n)_{n \in \mathbb{Z}^m}$. Then, it is important to notice that Theorem \ref{th : intro} provides a complete localization of the eigenvalues in $\mathcal{J}$, in the sense that it also provides a control on the multiplicity of eigenvalues. The latter is of major importance when studying stability of localized solutions, since the kernel of $D\mathbb{F}(\tilde{u})$ can have a multiplicity higher than $1$ due to the natural translation and rotation invariances (cf. Section \ref{sec : application proof stability}).

In fact, we illustrate our methodology on highly non-trivial examples, which represent novel results in the stability analysis of localized solutions. First, we investigate stability of localized stationary solutions in the planar Swift-Hohenberg PDE (cf. Section \ref{ssec : application SH}). By controlling the dimension of the kernel, we are able to conclude about nonlinear stability ``modulo trivial invariances" for a first localized pattern (cf. Section \ref{ssec : stable hexagonal pattern}). On the other hand, when studying unstability, Theorem \ref{th : intro} might provide the dimension of the unstable directions. In fact, having access to the dimension of the unstable directions and their relative amplitude can be useful for further analysis of the  local dynamics. We illustrate such an idea on a second localized solution for the Swift-Hohenberg PDE in Section \ref{ssec : unstable square pattern}, and prove that it possesses exactly 3 unstable directions. \\
Then, we demonstrate that our methodology applies to nonlocal equations, and we present an application to a localized traveling wave in the capillary-gravity Whitham equation (see Section \ref{ssec : application whitham}). In such a case, linear stability can be obtained proving that $D\mathbb{F}(\tilde{u})$ has exactly one positive eigenvalue, one simple eigenvalue at $0$ and the rest of the spectrum negative (see \cite{cadiot2024constructiveproofsexistencestability}). This application especially underlies  the ability to localize the eigenvalues and their multiplicity.\\
Finally, we show that our set-up extends to systems of PDEs and we treat the planar Gray-Scott model in Section \ref{sec : application GS}. In fact, we illustrate that our detailed control of the spectrum provides a deeper characterization of the solution $\tilde{u}$. In particular, in the case of the Gray-Scott model, we prove that our solution $\tilde{u}$ is nonlinearly unstable and that it is radially symmetric. The latter is obtained  thanks to a precise control of the kernel of $D\mathbb{F}(\tilde{u})$.

This paper is organized as follows. In Section \ref{sec : presentation of the problem} we introduce the required notation for our analysis, as well as assumptions determining the class of equations \eqref{eq : equation intro} under study. In Section \ref{sec : control spectrum DF fourier}, we present a methodology for controlling the spectrum of the Fourier coefficients operator $DF_d(U_0)$ using a generalized Gershgorin theorem. This approach is then used in Section \ref{sec : control spectrum DF unbounded} in order to localize a subset of the eigenvalues of $D\mathbb{F}(\tilde{u})$ and provide a proof for Theorem \ref{th : intro}. Finally, we present in Section \ref{sec : application proof stability} applications of our methodology to the investigation of linear stability of localized solutions. More specifically, we establish stability results in the planar Swift-Hohenberg PDE, in the capillary-gravity Whitham equation and in the planar Gray-Scott model. All computer-assisted proofs, including the requisite codes, are accessible on GitHub at \cite{julia_cadiot}.

\section{Presentation of the problem}\label{sec : presentation of the problem}

Before presenting our main results and their demonstrations, we provide additional notation and assumptions, completing the description of the Introduction \ref{sec : introduction}.

\subsection{Notations and main assumptions}\label{ssec : notation and main assumption}
 
Let $m \in \mathbb{N}$ and $s \in \mathbb{N}$, we denote by $H^s(\R^m)$ the usual Sobolev space on $\R^m$ and $L^2 = L^2(\mathbb{R}^m)$ the usual Lebesgue space on square integrable functions. More generally, $L^p = L^p(\R^m)$ is the usual $p$ Lebesgue space associated to its  norm $\| \cdot \|_{p}$.
% Denote by $$\mathcal{S} \bydef \left\{ u \in C^{\infty}(\mathbb{R}^m) : \sup_{x\in \mathbb{R}^m}|x^\beta \partial^\alpha u(x)| < \infty \text{ for all } \beta, \alpha \in \mathbb{N}^m \right\}$$ the {\em Schwartz space} on $\mathbb{R}^m$.
Denote by $\mathcal{B}(L^2)$ the space of bounded linear operators on $L^2$ and given $\B \in \mathcal{B}(L^2)$, denote by $\B^*$ the adjoint of $\B$ in $L^2.$ In particular, given $z \in \mathbb{C}$, denote by $z^*$ the complex conjugate of $z$. Moreover, $|\cdot|_p$ denotes the usual $p$-norm on $\mathbb{R}^m$.

Denote by  $\mathcal{F} :L^2 \to L^2$ the \textit{Fourier transform} operator and
write $\mathcal{F}(f) \bydef \hat{f}$, where $\hat{f}$ is defined as $\hat{f}(\xi) \bydef \int_{\R^m}f(x)e^{-i2\pi x\cdot \xi}dx$ for all $\xi \in \R^m$. Similarly, the \textit{inverse Fourier transform} operator is expressed as $\mathcal{F}^{-1}$ and defined as $\mathcal{F}^{-1}(f)(x) \bydef \int_{\R^m}f(\xi)e^{i2\pi x\cdot \xi}d\xi$. In particular, recall the classical Plancherel's identity
\begin{equation}\label{eq : plancherel definition}
    \|f\|_2 = \|\hat{f}\|_2, \quad \text{for all }f \in L^2.
\end{equation}
Finally, for $f_1,f_2 \in L^1$, the continuous convolution of $f_1$ and $f_2$ is represented by $f_1*f_2$.

In this section, we recall some elements of notation from \cite{unbounded_domain_cadiot} as well as the presentation of our framework. In particular, we introduce the required assumptions for the development of our analysis. Extensive details can be found in \cite{unbounded_domain_cadiot}, and applications to such a framework can be found in \cite{cadiot2024constructiveproofsexistencestability,cadiot20242dgrayscottequationsconstructive,sh_cadiot}.

Recall that we are interested in the following zero-finding problem
\begin{equation*}
    \mathbb{F}(u) = 0, ~~ \text{ where } ~  \mathbb{F}(u) = \mathbb{L} u + \mathbb{G}(u),
\end{equation*}
and $u : \R^m \to \R$ satisfies $u(x) \to 0$ as $|x|_2 \to \infty$.  In particular, $\mathbb{L}$ is given by its symbol $l$ as in \eqref{eq : definition of l}. 
Concerning the nonlinear operator $\mathbb{G}$, we assume the following. $\mathbb{G}$ is a purely nonlinear ($\mathbb{G}(0)=0$ and $D\mathbb{G}(0)=0$) polynomial  operator of order $N_{\mathbb{G}} \in \mathbb{N}$  where $N_{\mathbb{G}} \geq 2$, that is we can decompose it as a finite sum 
\begin{equation}\label{def: G and j}
     \mathbb{G}(u) \bydef \displaystyle\sum_{j = 2}^{N_{\mathbb{G}}}\mathbb{G}_j(u) 
\end{equation}
where $\mathbb{G}_j$ is a sum of monomials of degree $j$ in $u$.
%a multi-linear differential operator of polynomial degree $i$. 
In particular, for $j \in \{2,\dots,N_{\mathbb{G}}\}$, $\mathbb{G}_j$ can be decomposed as follows
\vspace{-.2cm}
\[
\mathbb{G}_j(u) \bydef \sum_{k \in I_j} (\mathbb{G}^1_{j,k}u) \cdots (\mathbb{G}^j_{j,k}u)
\vspace{-.2cm}
\]
where $I_j \subset \mathbb{N}$ is a finite set of indices, and where $\mathbb{G}^p_{j,k}$ is a Fourier multiplier operator for all $1\leq p \leq j$ and $k \in I_j$. Note that under such a formulation, \eqref{eq : equation intro} is an autonomous equation. Now, similarly as in \cite{unbounded_domain_cadiot}, we impose the following assumptions on $\mathbb{L}$ and $\mathbb{G}$.
\begin{assumption}\label{ass:A(1)}
Let $l$ be defined in \eqref{eq : definition of l}. Assume that there exists $\rho >0$ such that $l$ is analytic on the strip $I_\rho \bydef \{z \in \mathbb{C}^m, ~ |\text{Im}(z)|_\infty \leq \rho\}$, where $\text{Im}(z)$ is the imaginary part of $z$.
Moreover, assume that there exists $l_{min} >0$ such that
\begin{equation} \label{eq:l>0}
|l(\xi)| \geq l_{min} \qquad \text{for all } \xi \in \mathbb{R}^m 
\qquad \text{and } \lim_{|\xi|_2 \to +\infty}|l(\xi)| = +\infty. 
\end{equation}
\end{assumption}

\begin{assumption}\label{ass : LinvG in L1}
For all $2 \leq j \leq N_{\mathbb{G}}$, $k \in I_j$ and $1 \leq p \leq j$, define $g_{j,k}^p$ such that
\vspace{-.2cm}
\[
\mathcal{F}\bigg(\mathbb{G}_{j,k}^p u\bigg)(\xi) = g_{j,k}^p(\xi)\hat{u}(\xi),
\quad \text{for all } \xi \in \mathbb{R}^m. 
\vspace{-.2cm}
\]
Moreover, assume that each $\frac{g_{j,k}^p}{l}$ is analytic on $I_\rho$, that $\frac{g_{j,k}^p(\cdot + is)}{l(\cdot + is)} \in L^1$ and that $\displaystyle\lim_{|x|_2 \to \infty}\frac{g_{j,k}^p(x + is)}{l(x + is)} = 0 $ for all $s \in I_\rho$, where $I_\rho$ is defined in Assumption \ref{ass:A(1)}.
\end{assumption}

\begin{remark}
    Note that under Assumptions \ref{ass:A(1)} and \ref{ass : LinvG in L1},  Cauchy's integral theorem is applicable to $\frac{g_{j,k}^p}{l}$ on $I_\rho$ and one can prove that $\mathcal{F}^{-1}\left(\frac{g_{i,k}^p}{l}\right)(x) = \mathcal{O}(e^{-2\pi \rho |x|_1})$. This criterion is one of the main feature of the framework established in \cite{unbounded_domain_cadiot}. In practice, one might be able to show that  $\mathcal{F}^{-1}\left(\frac{g_{i,k}^p}{l}\right)(x) = \mathcal{O}(e^{-2\pi \rho |x|_1})$ can be satisfied for weaker conditions on the integrability of $\frac{g_{i,k}^p}{l}$ (cf. \cite{cadiot2024constructiveproofsexistencestability} or \cite{cadiot20242dgrayscottequationsconstructive} for instance). However, the analytic regularity imposed in Assumptions \ref{ass:A(1)} and \ref{ass : LinvG in L1} is essential for the desired exponential decay.
\end{remark}

\begin{remark}
    If $l$ and each $g^{p}_{i,k}$ are polynomials, then $\mathbb{F}(u) =0$ is an autonomous semi-linear PDE on $\R^m$. In particular $\mathbb{L}$ is a linear differential operator with constant coefficients and $\mathbb{G}(u)$ is polynomial in $u$ and its derivatives.
\end{remark}

As exposed in \cite{unbounded_domain_cadiot}, Assumption \ref{ass:A(1)} allows to define the Hilbert space $\mathcal{H}$ as 
\begin{align}\label{eq : hilbert space H}
    \mathcal{H} \bydef \left\{ u \in L^2, \|u\|_\mathcal{H} \bydef \|\mathbb{L}u\|_2 < \infty \right\}.
\end{align}
In particular $\mathcal{H}$ is associated to a natural inner product $(\cdot,\cdot)_\mathcal{H}$ given by $(u,u)_\mathcal{H} \bydef (\mathbb{L}u,\mathbb{L}u)_2$.
By construction $\mathbb{L} : \mathcal{H} \to L^2$ is an isometric isomorphism. Under Assumption \ref{ass : LinvG in L1}, Lemma 2.4 in \cite{unbounded_domain_cadiot} provides that $\mathbb{G} : \mathcal{H} \to H^1(\R^m)$ is smooth. In fact, the smoothness of $\mathbb{G} : \mathcal{H} \to L^2$ implies that $\mathbb{F} : \mathcal{H} \to L^2$, given in \eqref{eq : equation intro}, is also smooth.  As presented in \cite{unbounded_domain_cadiot}, the operators and spaces introduced in this section have a natural correspondence in the Fourier coefficients world. We recall some of these objects in the following section.

\begin{remark}
    Note that the space $\mathcal{H}$ is denoted $H^l$ in \cite{unbounded_domain_cadiot}, referring to the fact that 
    \[
    \|\mathbb{L}u\|_2 = \|l \hat{u}\|_2  ~~ \text{ for all } u \in \mathcal{H},
    \]
    using Plancherel's identity \eqref{eq : plancherel definition} and the definition of $l$ in \eqref{eq : definition of l}. However, in order to avoid all possible confusions with Sobolev spaces, we choose the more neutral notation $\mathcal{H}$ instead.
\end{remark}

\subsection{Fourier coefficients counterparts}\label{ssec : fourier coefficients counterpart}

Given some $q>0$, we consider the open cube $\Omega_q$ given by 
\begin{equation} \label{eq:definition_Omega_0}
\Omega_q \bydef (-q,q)^m \subset \R^m
\end{equation}
 Similarly to $\mathcal{H}$, we introduce the following Hilbert space $X_q$ on sequences 
\[
     X_q \bydef \{ U=(u_n)_{n\in \mathbb{Z}^m} ~:~ (U,U)_{X_q} < \infty\} \text{ where } (U,V)_{X_q} \bydef \sum\limits_{\underset{}{n \in \mathbb{Z}^m}} u_n v_n \left|l\left(\frac{n}{2q}\right)\right|^{2}.\vspace{-.4cm}
\]
Denote by $\|U\|_{X_q} \bydef \sqrt{(U,U)_{X_q}}$ the induced norm on $X_q$. 
Finally, $\ell^p \bydef \ell^p(\mathbb{Z}^m)$ denotes the usual $p$ Lebesgue space for sequences associated to its natural norm $\| \cdot \|_{p}$.

As in the continuous case, we introduce the notion of a convolution, which in this context is the discrete convolution between sequences $U=(U_n)_n$ and $V = (V_n)_n$, and is given by
\[
    (U*V)_n \bydef \sum\limits_{\underset{}{k \in \mathbb{Z}^m}} U_kV_{n-k},
    \quad \text{for all } n \in \mathbb{Z}^m.\vspace{-.2cm}
\]
The symbol ``$*$'' is used for both discrete and continuous convolutions when no confusion arises. Now, our operators in Section \ref{ssec : notation and main assumption} have a representation in $X_q$ for the  corresponding periodic boundary value problem on $\Omega_q$.   In fact the linear part $\mathbb{L}$ becomes an operator ${L}_q : X_q \to \ell^2$ defined as $L_qU = (l(\frac{n}{2q})u_n)_{n\in \mathbb{Z}^m}$ for all $U \in X.$
Similarly, the nonlinearity $\mathbb{G}$ given in \eqref{def: G and j} has a representation ${G}_q$ in $X_q$ that can be written as 
$
G_q(U) \bydef \sum_{i=1}^{N_\mathbb{G}}{G}_{i,q}(U) $
where ${G}_{i,q}$ is a multi-linear operator on $X_q$ of order $i$, and where products are interpreted as discrete convolutions. Hence, we can define $F_q : X_q \to \ell^2$ and the equivalent of the zero finding problem \eqref{eq : equation intro} on $X_q$ as
\begin{equation}\label{eq : f(u)=0 on X}
  F_q(U) \bydef L_qU + G_q(U), \quad \text{for } U \in X_q.
\end{equation}

In the rest of the paper, we fix some $d>0$ and denote $F = F_d$, $L = L_d$ and $G=G_d$ for simplicity. When necessary, we will use the subscript $d$ to avoid potential confusions.

\subsubsection{Passage between $L^2(\R^m)$ and $\ell^2$, and vice versa}

As explained in the introduction, our goal is to analyse the properties of $D\mathbb{F}(\tilde{u})$ thanks to those of $DF(U_0)$ in Fourier coefficients space, where $U_0 \in X_d$ for some $d>0$. Consequently, we need to introduce some transformations and associated spaces to switch from operators on $L^2(\R^m)$ to operators on $\ell^2$.
Let us recall the maps $\gamma_q : L^2 \to \ell^2$ and $\gamma^\dagger_q : \ell^2 \to L^2$  defined as 
\begin{align}\label{eq : gamma and gamma dagger functions}
    \gamma_q(u)  \bydef  \left(\frac{1}{|\omq|}\int_{\Omega_q} u(x)e^{-2\pi i \tilde{n}\cdot x}dx \right)_{n \in \mathbb{Z}^m} \text{ and }  \left(\gamma_q^\dagger(U)\right)(x) \bydef \chaq(x) \sum_{n \in \mathbb{Z}^m}u_ne^{2\pi i \tilde{n}\cdot x}
\end{align}
for all $u \in L^2$,
for all $U = (u_n)_{n \in \mathbb{Z}^m} \in \ell^2$ and all $x \in \R^m$, where $\chaq$ is the characteristic function on $\omq$. 

 Now, given a Hilbert space $H$ of functions defined on $\mathbb{R}^m$, denote by $H_\omq \subset H$ the subspace of $H$ defined as
\begin{equation}\label{def : Homega}
   H_\omq \bydef \{u \in H ~: ~ \text{supp}(u) \subset \overline{\omq} \}.
\end{equation}
For instance, we have $L^2_\omq \bydef \{u \in L^2 ~:~ \text{supp}(u) \subset \overline{\omq} \}$ or $\mathcal{H}_\omq \bydef \{u \in \mathcal{H} ~:~ \text{supp}(u) \subset \overline{\omq} \}$. Then, recall that $\mathcal{B}(L^2)$ (respectively $\mathcal{B}(\ell^2)$) denotes the space of bounded linear operators on $L^2$ (respectively $\ell^2$) and denote by $\mathcal{B}_\omq(L^2)$ the following subspace of $\mathcal{B}(L^2)$
\begin{equation}\label{def : Bomega}
    \mathcal{B}_\omq(L^2) \bydef \{\mathbb{B}_\omq \in \mathcal{B}(L^2) ~:~  \mathbb{B}_\omq = \chaq \mathbb{B}_\om \chaq\}.
\end{equation}
Finally, define $\Gamma_q : \mathcal{B}(L^2) \to \mathcal{B}(\ell^2)$ and $\Gamma^\dagger_q : \mathcal{B}(\ell^2) \to \mathcal{B}(L^2)$ as follows
\begin{align}\label{def : Gamma and Gamma dagger}
    \Gamma_q(\mathbb{B}) \bydef \gamma_q \mathbb{B} \gdag_q ~~ \text{ and } ~~  \Gamma^\dagger_q(B) \bydef \gamma^\dagger_q {B} \gamma_q. 
\end{align}
Now, we recall Lemma 3.2 from \cite{unbounded_domain_cadiot} which provides some useful properties of the previously defined transformations that will help us pass from $\ell^2$ to $L^2$ and vice versa.
\begin{lemma}\label{lem : gamma and Gamma properties}
    The map $\sqrt{|\omq|} \gamma_q : L^2_\omq \to \ell^2$ (respectively $\Gamma_q : \mathcal{B}_\omq(L^2) \to \mathcal{B}(\ell^2)$) is an isometric isomorphism whose inverse is given by $\frac{1}{\sqrt{|\omq|}} \gdag : \ell^2 \to L^2_\omq$ (respectively $\Gamma^\dagger_q :   \mathcal{B}(\ell^2) \to \mathcal{B}_\omq(L^2)$).\\
    In particular,
    \begin{align}\label{eq : parseval's identity}
        \|u\|_2 = \sqrt{\omq}\|U\|_2 \text{ and } \|\mathbb{B}_\omq\|_2 = \|B\|_2
    \end{align}
    for all $u \in L^2_\omq$ and $\mathbb{B}_\omq \in \mathcal{B}_\omq(L^2)$, and where $U \bydef \gamma_q(u)$ and $B \bydef \Gamma_q(\mathbb{B}_\omq)$.
\end{lemma}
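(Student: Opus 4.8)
The statement is the standard fact about Fourier series on the cube $\omq$, recast in the language of the maps $\gamma_q,\gdag_q,\Gamma_q,\Gamma^\dagger_q$. The plan is to reduce everything to the orthonormality of the exponential family $\{|\omq|^{-1/2}e^{2\pi i \tilde n\cdot x}\}_{n\in\Z^m}$ in $L^2(\omq)$, and then to propagate the scalar statement to the operator statement by a unitary conjugation, keeping careful track of the projection $\chaq$.

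\emph{Step 1 (scalar isomorphism).} First I would record that the functions $e_n(x)\bydef e^{2\pi i \tilde n\cdot x}$ satisfy $\int_{\omq}e_n\overline{e_k}\,dx=|\omq|\delta_{n,k}$, so that $\{|\omq|^{-1/2}e_n\}_{n\in\Z^m}$ is an orthonormal basis of $L^2(\omq)$, which I identify with $L^2_\omq$. For $u\in L^2_\omq$ the component $\gamma_q(u)_n=\frac{1}{|\omq|}\int_{\omq}u\,\overline{e_n}\,dx$ is exactly $|\omq|^{-1/2}$ times the $n$-th coordinate of $u$ in this basis, so Parseval's identity gives $\sum_n|\omq|\,|\gamma_q(u)_n|^2=\|u\|_2^2$, which is the first identity in \eqref{eq : parseval's identity} and shows that $\sqrt{|\omq|}\,\gamma_q$ maps $L^2_\omq$ isometrically into $\ell^2$. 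Conversely, for $U=(u_n)\in\ell^2$ the series defining $\gdag_q(U)$ converges in $L^2(\omq)$ with $\|\gdag_q(U)\|_2=\sqrt{|\omq|}\,\|U\|_2$, hence $\gdag_q(U)\in L^2_\omq$; the orthogonality relations give $\gamma_q\gdag_q=\mathrm{Id}_{\ell^2}$, while the $L^2$-convergence of Fourier series on $\omq$ gives $\gdag_q\gamma_q=\mathrm{Id}_{L^2_\omq}$. Thus $\sqrt{|\omq|}\,\gamma_q$ is a surjective isometry with inverse $|\omq|^{-1/2}\gdag_q$.

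\emph{Step 2 (auxiliary identities on $L^2$) and Step 3 (operator isomorphism).} Before handling operators I would note the three facts that make the $\chaq$-dressing transparent: multiplication by $\chaq$ is the orthogonal projection of $L^2$ onto $L^2_\omq$; $\gamma_q=\gamma_q\chaq$, since $\gamma_q(u)$ depends only on $u|_{\omq}$; and $\gdag_q\gamma_q=\chaq$ as operators on $L^2$, which is Step 1 applied to $\chaq u$. Now, given $\mathbb{B}_\omq\in\mathcal{B}_\omq(L^2)$, write $\mathbb{B}_\omq=\chaq\mathbb{B}_\omq\chaq$; since $\gdag_q$ takes values in $L^2_\omq$, one gets $\Gamma_q(\mathbb{B}_\omq)=\gamma_q\mathbb{B}_\omq\gdag_q=P\,\mathbb{b}\,P^{-1}$, where $P\bydef\sqrt{|\omq|}\,\gamma_q|_{L^2_\omq}$ is the unitary of Step 1 and $\mathbb{b}\bydef\mathbb{B}_\omq|_{L^2_\omq}:L^2_\omq\to L^2_\omq$. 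As $P$ is unitary, $\Gamma_q(\mathbb{B}_\omq)\in\mathcal{B}(\ell^2)$ with $\|\Gamma_q(\mathbb{B}_\omq)\|_2=\|\mathbb{b}\|_2=\|\mathbb{B}_\omq\|_2$ (the last equality because $\mathbb{B}_\omq u=\mathbb{b}(\chaq u)$ for $u\in L^2$, and $\chaq$ is a norm-one projection attaining its norm on $L^2_\omq$), giving the second identity in \eqref{eq : parseval's identity}. For the inverse, given $B\in\mathcal{B}(\ell^2)$ set $\Gamma^\dagger_q(B)=\gdag_q B\gamma_q$; Step 2 gives $\chaq\Gamma^\dagger_q(B)\chaq=\Gamma^\dagger_q(B)$, so $\Gamma^\dagger_q(B)\in\mathcal{B}_\omq(L^2)$, and $\gamma_q\gdag_q=\mathrm{Id}_{\ell^2}$, $\gdag_q\gamma_q=\chaq$ then yield $\Gamma_q\Gamma^\dagger_q(B)=B$ and $\Gamma^\dagger_q\Gamma_q(\mathbb{B}_\omq)=\chaq\mathbb{B}_\omq\chaq=\mathbb{B}_\omq$.

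\emph{Main obstacle.} There is no deep difficulty: the content is entirely the orthonormality of the exponential basis plus bookkeeping of the projection $\chaq$. The one place to be careful is the operator step, namely to keep straight that $\mathbb{B}_\omq$ as an operator on $L^2$ carries the same data as its restriction $\mathbb{b}$ to the invariant subspace $L^2_\omq$, up to the harmless pre- and post-composition with $\chaq$, so that the operator norm genuinely transfers; and to ensure that the Fourier series converges in $L^2_\omq$ (and not merely in $L^2(\omq)$, i.e.\ as a function supported in $\overline{\omq}$) so that all the composition identities hold literally as stated.
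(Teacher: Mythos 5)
Your proof is correct and is the standard argument: everything reduces to the orthonormality and completeness of the exponential family on $\omq$, with the operator case following by unitary conjugation once one observes that $\chaq$ is the orthogonal projection onto $L^2_\omq$, that $\gamma_q\chaq=\gamma_q$, and that $\gdag_q\gamma_q=\chaq$. The paper itself does not prove this lemma but simply recalls it from \cite{unbounded_domain_cadiot}, so there is nothing to compare against in-text; your argument is the natural one, it fills that gap completely, and it correctly reads the paper's $\sqrt{\omq}$ in \eqref{eq : parseval's identity} as the intended $\sqrt{|\omq|}$.
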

In particular, given a bounded operator $B : \ell^2 \to \ell^2$, then $\Gamma_q^\dagger(B)$ provides an equivalent bounded operator on $L^2$ with $\|\Gamma_q^\dagger(B)\|_2 = \|B\|_2$. This fact will notably be useful to approximate operators on $L^2 \to L^2$  thanks to bounded operators on Fourier coefficients. 

Finally, the computer-assisted part of our approach involves the rigorous computation of vector and matrix quantities on the computer. In particular, it requires considering finite dimensional projections.
Let $N \in \mathbb{N}$ and define the projections $\pi^N : \ell^2 \to \ell^2$ and $\pi_N : \ell^2 \to \ell^2$ as follows
\vspace{-.2cm}
\begin{align}\label{def : projection on size N}
    \left(\pi^N(U)\right)_n  =  \begin{cases}
          U_n,  & n \in I^N \\
              0, &n \notin I^N
    \end{cases} 
     ~~ \text{and} ~~
     \left(\pi_N(U)\right)_n  =  \begin{cases}
          0,  & n \in I^N \\
              U_n, &n \notin I^N
    \end{cases}
    \vspace{-.2cm}
\end{align}
for all $n \in \mathbb{Z}^m$, where $I^N \bydef \{n \in \mathbb{Z}^m,~  |n_1| \leq N,\dots,|n_m|\leq N\}.$ Specifically, the above projection operators will help us separate finite-dimensional computations (which will be handled on the computer) and theoretical estimates.

\subsection{Some spectral properties}

The framework of \cite{unbounded_domain_cadiot} provides a general framework for proving constructively the existence of zeros of $\mathbb{F}$ in $\mathcal{H}$, around an approximate solution $u_0 \in \mathcal{H}$. We recall some of the elements of such a construction.

Using the methodology of \cite{unbounded_domain_cadiot}, $u_0$ is constructed via its Fourier coefficients $U_0$, and more precisely  $u_0 \bydef \gamma_d^\dagger(U_0)$. Moreover, we assume that $U_0 \in \ell^2$ satisfies $U_0 = \pi^N U_0$ (that is $U_0$ only has a finite number of non-zero elements). In practice, $U_0$ is determined numerically and $N$ represents the size of the numerical truncation. By construction, note that supp$(u_0) \subset \overline{\om}$ by definition of $\gamma_d^\dagger.$ In particular, using Section 4.1 in \cite{unbounded_domain_cadiot}, we can ensure that $u_0 \in \mathcal{H}$ using a projection on the set of function with zero trace on $\Omega_d$. Consequently, without loss of generality, we assume for the rest of this manuscript that we have access to an approximate zero of $\mathbb{F}$ such that 
\begin{equation}\label{eq : approximate solution}
    u_0 = \gamma_d^\dagger(U_0) \in \mathcal{H} ~~ \text{ and } ~~ U_0 = \pi^N U_0 \in \ell^2.
\end{equation}
Now, using the set-up exposed in \cite{unbounded_domain_cadiot}, and especially Theorem 4.6, suppose that we were able to prove the existence of $\tilde{u} \in \mathcal{H}$ such that $\mathbb{F}(\tilde{u}) = 0$ and 
\begin{equation}\label{eq : defect with true solution}
    \|\tilde{u} - u_0\|_{\mathcal{H}} \leq r_0
\end{equation}
for some known $r_0$. Then, our strategy is to control the spectrum of $D\mathbb{F}(\tilde{u}) : \mathcal{H} \to L^2$ thanks to the one of $DF(U_0) : {X_d} \to \ell^2.$

Given a bounded linear operator $\mathbb{K}$, denote by $\sigma(\mathbb{K})$ the spectrum, $\text{Eig}(\mathbb{K})$ the eigenvalues and $\sigma_{ess}(\mathbb{K})$ the essential spectrum of $\mathbb{K}$. Moreover, given $\delta \geq 0$, let $\sigma_\delta$ be the following set
\begin{align}\label{def : sigma delta}
    \sigma_\delta \bydef \left\{\lambda \in \mathbb{C},~ |l(\xi) - \lambda| > \delta ~\text{ for all } \xi \in \R^m\right\}.
\end{align}
Then we derive the following lemma which allows to separate the spectrum of $D\mathbb{F}(\tilde{u})$ thanks to $\sigma_0.$
\begin{lemma}\label{lem : separation of the spectrum sigma 0}
    We have the following identities
    \begin{align*}
        \sigma_0 \cap \sigma\left(D\mathbb{F}(\tilde{u})\right) &= \text{Eig}\left(D\mathbb{F}(\tilde{u})\right)\\
        \left(\R^m \setminus \sigma_0\right) \cap \sigma\left(D\mathbb{F}(\tilde{u})\right) &= \sigma_{ess}\left(D\mathbb{F}(\tilde{u})\right).
    \end{align*}
    Moreover, the eigenvalues of $D\mathbb{F}(\tilde{u})$ have a finite multiplicity.
\end{lemma}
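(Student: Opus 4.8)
The plan is to write $D\mathbb{F}(\tilde u) = \mathbb{L} + D\mathbb{G}(\tilde u)$ and treat $D\mathbb{G}(\tilde u)$ as a relatively compact perturbation of $\mathbb{L}$. The first step is to establish the compactness claim: by Assumption \ref{ass : LinvG in L1} and the product rule, $D\mathbb{G}(\tilde u)$ is a finite sum of terms of the form $(\mathbb{G}^{p_1}_{j,k}\tilde u)\cdots(\mathbb{G}^{p_{j-1}}_{j,k}\tilde u)\,\mathbb{G}^{p_j}_{j,k}(\cdot)$, i.e. multiplication by an $H^1\cap L^\infty$ function composed with a Fourier multiplier $\mathbb{G}^p_{j,k}$. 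Since $\mathbb{L}:\mathcal H\to L^2$ is an isometric isomorphism, it suffices to show $D\mathbb{G}(\tilde u)\mathbb{L}^{-1}:L^2\to L^2$ is compact; equivalently that $\frac{g^p_{j,k}}{l}$ defines a compact multiplier up to multiplication by the (decaying, by Lemma 2.4 of \cite{unbounded_domain_cadiot}) functions $\mathbb{G}^{q}_{j,k}\tilde u\in H^1\cap L^\infty$. The hypotheses $\frac{g^p_{j,k}}{l}\in L^1$ and $\frac{g^p_{j,k}(x)}{l(x)}\to 0$ as $|x|_2\to\infty$ force $\mathcal F^{-1}(g^p_{j,k}/l)\in L^\infty$ with decay, and the combination ``decaying multiplication operator'' $\times$ ``multiplier vanishing at infinity'' is a classical recipe for compactness on $L^2(\R^m)$ (it is a norm limit of Hilbert–Schmidt operators, obtained by truncating both the spatial variable and the frequency variable). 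This is the step I expect to be the main obstacle: one must be careful that the spatial factors genuinely decay (which is exactly the $\mathcal O(e^{-2\pi\rho|x|_1})$ statement in the Remark after Assumption \ref{ass : LinvG in L1}, applied to $\tilde u$ rather than to a kernel) and that the frequency factor is small at infinity, so that the usual cut-off argument produces a Cauchy sequence of compact operators.

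Given compactness of $D\mathbb{G}(\tilde u)$ relative to $\mathbb{L}$, Weyl's theorem on the invariance of the essential spectrum gives $\sigma_{ess}(D\mathbb{F}(\tilde u)) = \sigma_{ess}(\mathbb{L})$. The second step is to identify $\sigma_{ess}(\mathbb{L})$: since $\mathbb{L}$ is the Fourier multiplier with symbol $l$, unitary conjugation by $\mathcal F$ turns it into multiplication by $l$ on $L^2$, whose spectrum (indeed whose essential spectrum, as $l$ is continuous and $|l(\xi)|\to\infty$, so no point of the range is isolated with finite-dimensional eigenspace) is exactly $\overline{\{l(\xi):\xi\in\R^m\}}$; by continuity and properness of $l$ this closure equals $\{l(\xi):\xi\in\R^m\}$. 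This proves \eqref{eq : essential spectrum} and, by the definition \eqref{def : sigma delta} of $\sigma_0$, that $(\R^m\setminus\sigma_0)\cap\sigma(D\mathbb{F}(\tilde u)) = \sigma_{ess}(D\mathbb{F}(\tilde u))$, since $\R^m\setminus\sigma_0$ is precisely the closure of the range of $l$ intersected with $\R^m$ — here one uses that $l$ is real-analytic data and, in the applications, real-valued, so the essential spectrum sits on the real axis; more precisely one should write $\sigma_0 = \mathbb C\setminus\overline{\{l(\xi)\}}$ directly and note $\sigma_{ess}(D\mathbb F(\tilde u)) = \overline{\{l(\xi)\}} = \mathbb C\setminus\sigma_0$, so its complement in $\sigma(D\mathbb F(\tilde u))$ is what remains.

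The third step handles the eigenvalue part. For $\lambda\in\sigma_0$ we have $|l(\xi)-\lambda|\ge\delta>0$ for all $\xi$, so $\mathbb{L}-\lambda I:\mathcal H\to L^2$ is boundedly invertible; then $D\mathbb{F}(\tilde u)-\lambda I = (\mathbb{L}-\lambda I)(I + (\mathbb{L}-\lambda I)^{-1}D\mathbb{G}(\tilde u))$ is Fredholm of index zero because $(\mathbb{L}-\lambda I)^{-1}D\mathbb{G}(\tilde u)$ is compact on $\mathcal H$ (same compactness input as above). Hence on $\sigma_0$, $\lambda\in\sigma(D\mathbb F(\tilde u))$ iff $D\mathbb F(\tilde u)-\lambda I$ is not injective, i.e. iff $\lambda$ is an eigenvalue; this is the first displayed identity. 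Moreover each such $\lambda$ is an isolated point of the spectrum with finite-dimensional generalized eigenspace, by the analytic Fredholm theorem applied to the operator-valued map $\lambda\mapsto (\mathbb{L}-\lambda I)^{-1}D\mathbb{G}(\tilde u)$ on the connected open set $\sigma_0$ (note $\sigma_0$ is nonempty and connected since it is the complement of the range of a proper continuous map). This yields the final claim that eigenvalues of $D\mathbb F(\tilde u)$ have finite multiplicity, and also shows $\mathrm{Eig}(D\mathbb F(\tilde u))\subset\sigma_0$, completing the two identities.
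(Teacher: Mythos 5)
Your proposal is correct and follows essentially the same route as the paper: decompose $D\mathbb{F}(\tilde u)=\mathbb{L}+D\mathbb{G}(\tilde u)$, establish relative compactness of $D\mathbb{G}(\tilde u)$ with respect to $\mathbb{L}$, invoke Weyl's theorem on the essential spectrum, and use Fredholm theory on $\sigma_0$ to identify the remaining spectrum as eigenvalues of finite multiplicity. The only differences are cosmetic: the paper simply cites Lemma 3.1 of \cite{unbounded_domain_cadiot} for the compactness rather than re-deriving it, and closes with the Fredholm alternative rather than the analytic Fredholm theorem. If you do use the analytic Fredholm theorem, note that to exclude the degenerate alternative you should also observe that $I+(\mathbb{L}-\lambda I)^{-1}D\mathbb{G}(\tilde u)$ is invertible for at least one $\lambda$ in each connected component of $\sigma_0$ (for instance for $|\lambda|$ large, as in Proposition \ref{prop : minimum of the spectrum full}); connectedness of $\sigma_0$ itself is neither needed nor obvious in general.
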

\begin{proof}
First, notice that the proof of the second identity  is a direct consequence of Weyl's perturbation theory (cf. \cite{kato2013perturbation} Theorem 5.35 in Chapter IV for instance). Indeed,  using Lemma 3.1 in \cite{unbounded_domain_cadiot}, we known that $D\mathbb{G}(\tilde{u})$ is relatively compact with respect to $\mathbb{L}$. This implies that $\sigma_{ess}\left(D\mathbb{F}(\tilde{u})\right) = \sigma_{ess}\left(\mathbb{L}\right) =\R^m \setminus \sigma_0 = \left(\R^m \setminus \sigma_0\right) \cap \sigma\left(D\mathbb{F}(\tilde{u})\right).$ Moreover, this also yields that $$\text{Eig}\left(D\mathbb{F}(\tilde{u})\right) \subset  \sigma_0 \cap \sigma\left(D\mathbb{F}(\tilde{u})\right).$$
     If $\lambda \in \sigma_0 \cap \sigma\left(D\mathbb{F}(\tilde{u})\right)$, then $\mathbb{L}-\lambda I : \mathcal{H} \to L^2$ is boundedly invertible and, using the proof of Lemma 3.1 in \cite{unbounded_domain_cadiot}, we obtain that $D\mathbb{G}(\tilde{u})$ is relatively compact with respect to $\mathbb{L}-\lambda I$. This implies that $D\mathbb{F}(\tilde{u}) - \lambda I : \mathcal{H} \to L^2$ is a Fredholm operator of index zero and therefore $\lambda \in \text{Eig}\left(D\mathbb{F}(\tilde{u})\right).$   Moreover, the Fredholm alternative provides that $\lambda$ has a finite multiplicity (see \cite{brezis2011functional}).
\end{proof}

In practice, using the above lemma, one can easily determine $\sigma_{ess}\left(D\mathbb{F}(\tilde{u})\right)$ thanks to the range of the symbol $l : \R^m \to \mathbb{C}$, as already exposed in \eqref{eq : essential spectrum}.  Now we derive a result which will be useful when constructing explicit disks for the enclosure of the eigenvalues of $D\mathbb{F}(\tilde{u})$.
\begin{prop}\label{prop : minimum of the spectrum full}
    There exists $t \in \mathbb{C}$ such that both $DF(U_0) + tI : {X_d} \to \ell^2$ and $D\mathbb{F}(\tilde{u}) + tI : \mathcal{H} \to L^2$ are boundedly invertible. 
\end{prop}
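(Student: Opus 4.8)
The plan is to show that the resolvent sets of $DF(U_0)$ and $D\mathbb{F}(\tilde u)$ are both nonempty and that their intersection (suitably identified via $-t$) is nonempty. The key observation is that $D\mathbb{F}(\tilde u)-\lambda I$ is boundedly invertible for all $\lambda$ in a sufficiently large ball of the left half-plane (in the $\text{Re}$ direction), thanks to the structure $D\mathbb{F}(\tilde u)=\mathbb{L}+D\mathbb{G}(\tilde u)$. Indeed, for $\lambda\in\sigma_0$ Lemma \ref{lem : separation of the spectrum sigma 0} tells us $D\mathbb{F}(\tilde u)-\lambda I$ is Fredholm of index zero, so it is boundedly invertible precisely when $\lambda\notin\text{Eig}(D\mathbb{F}(\tilde u))$; since the eigenvalues have finite multiplicity and accumulate only on $\sigma_{ess}(D\mathbb{F}(\tilde u))=\{l(\xi):\xi\in\R^m\}$ (a closed set not containing, e.g., $-t$ for $t$ with $|t|$ large and $\text{Re}(t)$ large), the set $\text{Eig}(D\mathbb{F}(\tilde u))$ is discrete in $\sigma_0$. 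Hence $\{t\in\mathbb{C}: D\mathbb{F}(\tilde u)+tI \text{ boundedly invertible}\}$ is the complement of $-\sigma(D\mathbb{F}(\tilde u))$, an open set containing all but a discrete set of points of $-\sigma_0$, hence containing a nonempty open set.

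Next I would do the same for $DF(U_0):X_d\to\ell^2$. This operator has the form $L+DG(U_0)$ where $L$ is the diagonal multiplication by $(l(n/2d))_{n\in\Z^m}$ and $DG(U_0)$ is a finite-rank-type perturbation (since $U_0=\pi^N U_0$, the convolution operators $U\mapsto (\mathbb{G}^p_{j,k}U_0)*(\cdots)$ involved in $DG(U_0)$ are, after composing with $L^{-1}$, compact on $\ell^2$ by the analogue of Lemma 3.1 of \cite{unbounded_domain_cadiot} in Fourier-coefficient space). Therefore $DF(U_0)+tI = L+tI+DG(U_0)$ is, for $t\notin -\{l(n/2d):n\in\Z^m\}$, a Fredholm operator of index zero, hence boundedly invertible away from a discrete set of eigenvalues accumulating only at $\{l(n/2d):n\in\Z^m\}$. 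So $\{t\in\mathbb{C}: DF(U_0)+tI \text{ boundedly invertible}\}$ is again the complement of a closed set of empty interior union that discrete grid, i.e., an open dense set.

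Finally, the intersection of two open dense subsets of $\mathbb{C}$ is open and dense, hence nonempty: any $t$ in the intersection works. Concretely, one may simply pick $t$ real with $t$ large enough that $t+l(n/2d)\ne 0$ for all $n$ (possible since the grid values avoid a whole interval by \eqref{eq:l>0}, or at worst one perturbs $t$ slightly), $t\notin -\sigma(DF(U_0))$, and $t\notin -\sigma(D\mathbb{F}(\tilde u))$; the two excluded sets are each closed with empty interior, so such $t$ exists.

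The main obstacle I anticipate is the bookkeeping needed to transfer the relative-compactness arguments from $\mathcal{H}\to L^2$ (Lemma 3.1 of \cite{unbounded_domain_cadiot}) to the Fourier-coefficient side $X_d\to\ell^2$, i.e., verifying that $DG(U_0)L^{-1}$ (equivalently $L^{-1}DG(U_0)$) is compact on $\ell^2$ so that the Fredholm/Weyl argument applies to $DF(U_0)$; once that compactness is in hand, the rest is a soft topological argument. An alternative, possibly cleaner route avoiding any new compactness claim is to observe that $DF(U_0)$ is a bounded perturbation of the invertible diagonal operator $L$, so for $|t|$ large $DF(U_0)+tI$ is automatically invertible by a Neumann series in $L^{-1}$; combined with the large-$t$ invertibility of $D\mathbb{F}(\tilde u)+tI$ (again a Neumann-series/resolvent estimate using that $D\mathbb{G}(\tilde u)\mathbb{L}^{-1}$ is bounded), one gets a common large real $t$ directly, and Proposition \ref{prop : minimum of the spectrum full} follows without invoking discreteness of the eigenvalues at all.
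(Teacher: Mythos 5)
Your ``alternative, possibly cleaner route'' sketched at the end is in fact exactly what the paper does: the paper writes $DF(U_0)+tI = \bigl[I + DG(U_0)(L+tI)^{-1}\bigr](L+tI)$ and $D\mathbb{F}(\tilde u)+tI = \bigl[I + D\mathbb{G}(\tilde u)(\mathbb{L}+tI)^{-1}\bigr](\mathbb{L}+tI)$ with $t=st_0$, chooses the \emph{direction} $t_0\in\mathbb{C}$ so that $|l(\xi)+st_0|\geq \tfrac12(|l(\xi)|+s|t_0|)$ for all $\xi$ and $s$ large, and concludes by a Neumann series once the perturbation norms fall below $\tfrac12$. So the paper avoids Fredholm/discreteness entirely, just as you suggest. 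The one subtlety you gloss over is the choice of direction: ``pick $t$ real with $t$ large'' is not enough, since for, e.g., Swift--Hohenberg one has $l(\xi)\in(-\infty,-\mu]$, so $t$ real and large \emph{positive} makes $\inf_\xi|l(\xi)+t|=0$; one has to take $t$ along the ray that pushes away from the range of $l$ (here large negative). The paper encodes this in the existence of $t_0$ and $S$ with the stated inequality, using that $l$ is smooth with $|l(\xi)|\to\infty$.

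Your primary route (Fredholm index zero in $\sigma_0$, discreteness of eigenvalues, then Baire category on the two resolvent sets) is genuinely different from the paper's and can be made to work, but it carries a hidden hypothesis: density of the resolvent set requires $\sigma_{ess}(D\mathbb{F}(\tilde u))=\{l(\xi):\xi\in\R^m\}$ to have empty interior in $\mathbb{C}$. That holds in all the applications ($l$ real- or real-matrix-valued), but it is not forced by Assumption~\ref{ass:A(1)} alone for $m\geq 2$ with genuinely complex $l$, whose range could a priori be two-dimensional. You also correctly flag the need for compactness of $DG(U_0)L^{-1}$ on $\ell^2$; the paper obtains the needed compactness in Lemma~\ref{eq : spectrum of DFU0 is point} (via compactness of $(L+tI)^{-1}$), but within the proof of this proposition it deliberately sidesteps that by using only boundedness of $DG(U_0)L^{-1}$, which is another reason the Neumann-series route is the cleaner match for the stated assumptions.
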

\begin{proof}
Using that $l : \R^m \to \mathbb{C}$ is smooth and $|l(\xi)| \to +\infty$ as $|\xi|_2 \to +\infty$,  there exists $t_0 \in \mathbb{C}$ and $S >0$ such that $|l(\xi) + s t_0| \geq \frac{|l(\xi)| + |t_0|s}{2} $ for all $\xi \in \mathbb{R}^m$ and all $s \geq S$. Given $s \geq S$, this implies that $\mathbb{L} + st_0 I : \mathcal{H} \to L^2$ is boundedly invertible. Moreover, we have 
\begin{align}\label{eq : compactness fourier coeffs}
    \|DG(U_0)(L + st_0 I)^{-1}\|_2 &\leq \|DG(U_0)L^{-1}\|_2  \|L (L+st_0 I)^{-1}\|_2 \\
    &\leq \|DG(U_0)L^{-1}\|_2\sup_{\xi \in \R^m}\frac{2|l(\xi)|}{|l(\xi)|+ |t_0|s}.
\end{align}
Since $DG(U_0) : X_d \to \ell^2$ is bounded, we obtain that $\|DG(U_0)L^{-1}\|_2 < \infty$.
Consequently, we obtain that $ \|DG(U_0)(L + st_0 I)^{-1}\|_2 \to 0$ as $s \to \infty$. A similar reasoning holds for $\|D\mathbb{G}(\tilde{u})(\mathbb{L} + st_0 I)^{-1}\|_2$. In particular,  we get  that there exists $s$ sufficiently big such that 
\begin{align*}
    \|DG(U_0)(L + st_0 I)^{-1}\|_2 \leq \frac{1}{2}   ~~ \text{ and } ~~ \lim_{s \to \infty}\|D\mathbb{G}(\tilde{u})(\mathbb{L} + st_0 I)^{-1}\|_2 \leq \frac{1}{2}, 
\end{align*}
which implies that $DF(U_0) + t I =  \left[I +  DG(U_0)(L + t I)^{-1}\right](L + t I) : {X_d} \to \ell^2$ and $D\mathbb{F}(\tilde{u}) = \left[I + D\mathbb{G}(\tilde{u})(\mathbb{L} + t I)^{-1}\right] (\mathbb{L} + t I) : \mathcal{H} \to L^2 $ are boundedly invertible for $t = st_0$, using a Neumann series argument.
\end{proof}
Now, our goal is to control the eigenvalues of $D\mathbb{F}(\tilde{u})$ away from the essential spectrum. In particular, given $\delta >0$, we want to determine the position and the multiplicity of the eigenvalues of  $D\mathbb{F}(\tilde{u})$ in $\sigma_{\delta}.$ More specifically, we want to enclose these eigenvalues thanks those of $DF(U_0)$.

\section{Spectrum of \boldmath$DF(U_0)$ \unboldmath  using Gershgorin disks}\label{sec : control spectrum DF fourier}

In this section, we present a computer-assisted strategy to enclose the spectrum of $DF(U_0)$. Our approach is based on a pseudo-diagonalization of $DF(U_0)$, that is we construct a linear operator $P$ and its inverse, and study the spectrum of $\mathcal{D} = P^{-1}DF(U_0)P$ instead. By construction $\mathcal{D}$ is supposed to be diagonally dominant, which hints to the Gershgorin theorem. In particular, we derive a generalization of the Gershgorin theorem in the specific context of Assumptions \ref{ass:A(1)} and \ref{ass : LinvG in L1}. Note that a similar framework is also described more generally in \cite{breden2025turinginstabilitynonlocalheterogeneous}. First, we prove that the spectrum of $DF(U_0)$ consists of eigenvalues with finite multiplicity.

\begin{lemma}\label{eq : spectrum of DFU0 is point}
    $\sigma\left(DF(U_0)\right)$ consists of eigenvalues with finite multiplicity.
\end{lemma}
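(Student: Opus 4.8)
The statement is the exact Fourier-coefficients analogue of Lemma \ref{lem : separation of the spectrum sigma 0}, so the plan is to mirror that proof. Write $DF(U_0) = L + DG(U_0)$ on $X_d$. The key structural facts are: (i) $L : X_d \to \ell^2$ is an isometric isomorphism (it is diagonal with entries $l(n/2d)$, and $X_d$ is defined precisely with the weight $|l(n/2d)|^2$); and (ii) $DG(U_0) : X_d \to \ell^2$ is a \emph{compact} operator. Granting (ii), the argument is short: for any $\lambda \in \mathbb{C}$, we want to show $DF(U_0) - \lambda I$ is Fredholm of index zero. By Proposition \ref{prop : minimum of the spectrum full} there is a $t \in \mathbb{C}$ with $L + tI$ boundedly invertible; actually it is cleaner to note directly that $L - \lambda I : X_d \to \ell^2$ is boundedly invertible whenever $\lambda \in \sigma_0$ (since $|l(n/2d) - \lambda| \geq \delta > 0$ uniformly, and $|l(n/2d)|/|l(n/2d)-\lambda|$ stays bounded because $|l|\to\infty$), while for $\lambda \notin \sigma_0$ we still have $\lambda = l(\xi)$ for some $\xi$, but $l(n/2d)$ ranges over a \emph{discrete} set, so $L - \lambda I$ is boundedly invertible unless $\lambda = l(n/2d)$ for some $n$, in which case it is Fredholm of index zero (a finite-rank perturbation of an invertible diagonal operator — only finitely many $n$ can satisfy $l(n/2d)=\lambda$ since $|l(n/2d)| \to \infty$). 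In every case $L - \lambda I$ is Fredholm of index zero, hence so is $DF(U_0) - \lambda I = (L-\lambda I) + DG(U_0)$, being a compact perturbation. Therefore $\lambda \in \sigma(DF(U_0))$ forces $DF(U_0)-\lambda I$ non-injective, i.e. $\lambda \in \mathrm{Eig}(DF(U_0))$, and the Fredholm alternative gives finite multiplicity.

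The main work is therefore establishing (ii): $DG(U_0) : X_d \to \ell^2$ is compact. Here I would exploit that $U_0 = \pi^N U_0$ has only finitely many nonzero entries. Since $G$ is polynomial of degree $N_{\mathbb G}$, $DG(U_0)$ is built from finitely many terms of the form $V \mapsto M (U_0 * \cdots * U_0 * (\Lambda V))$, where $\Lambda$ is a diagonal multiplier (coming from some $\mathbb{G}^p_{j,k}$) and $M$ is a diagonal multiplier. The crucial point is that convolution by the finitely-supported sequence $U_0^{* (j-1)}$ is, up to the diagonal weights, a band operator: $(U_0^{*(j-1)} * W)_n$ only involves $W_k$ with $|n-k|$ bounded by $(j-1)N$ in each coordinate. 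One then checks that, as a map $X_d \to \ell^2$, this is a norm-limit of its finite-rank truncations. Concretely: write $DG(U_0) = DG(U_0)\pi^K + DG(U_0)\pi_K$; the first term is finite-rank, and one shows $\|DG(U_0)\pi_K\|_{X_d \to \ell^2} \to 0$ as $K \to \infty$. The estimate uses Assumption \ref{ass : LinvG in L1}: the factor $g^p_{j,k}(n/2d)/l(n/2d) \to 0$ as $|n|\to\infty$, which supplies the decay killing the tail (the same mechanism that makes $D\mathbb{G}(\tilde u)$ relatively $\mathbb{L}$-compact in Lemma 3.1 of \cite{unbounded_domain_cadiot}). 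Alternatively — and this is probably the slickest route — one invokes the transfer Lemma \ref{lem : gamma and Gamma properties}: $\Gamma^\dagger_d$ maps $DG(U_0)L^{-1}$ (bounded on $\ell^2$) to a bounded operator on $L^2$ supported on $\Omega_d$, and one relates compactness of $DG(U_0)$ to the already-established relative $\mathbb{L}$-compactness of $D\mathbb{G}$ — though care is needed because $U_0 \in \mathcal H$ is a genuine function while the link between $DG(U_0)$ and $D\mathbb{G}(u_0)$ through $\Gamma_d$ involves the periodic problem, not $\mathbb R^m$. I would most likely just prove compactness directly via the band-plus-decay argument, as it is self-contained.

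The expected obstacle is precisely this compactness verification: it requires being careful that the weights defining $X_d$ interact correctly with the convolution structure. Specifically, moving a diagonal multiplier $\Lambda = \mathrm{diag}(g^p_{j,k}(n/2d))$ past a convolution and re-expressing everything as an operator $X_d \to \ell^2$ amounts to controlling $|l(n/2d)|^{-1} |g^p_{j,k}(n/2d)| \cdot |l(k/2d)|^{-1} \cdot (\text{finitely supported convolution kernel})$, and one needs the product of $|g^p_{j,k}/l|$ evaluated at the large index with the bounded convolution kernel to vanish at infinity — which is exactly what Assumption \ref{ass : LinvG in L1} delivers. Once compactness is in hand, the Fredholm/Weyl argument is routine and essentially identical to the proof of Lemma \ref{lem : separation of the spectrum sigma 0}. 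I would also remark that this lemma is what licenses speaking of ``the eigenvalues $(\lambda_n)_{n\in\mathbb Z^m}$ of $DF_d(U_0)$'' in Theorem \ref{th : intro}.
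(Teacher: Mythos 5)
Your argument is correct but follows a genuinely different route from the paper's. The paper shows that $DF(U_0)$ has \emph{compact resolvent} on $\ell^2$: with $t$ as in Proposition \ref{prop : minimum of the spectrum full}, it writes $(DF(U_0)+tI)^{-1} = (L+tI)^{-1}\bigl(I + DG(U_0)(L+tI)^{-1}\bigr)^{-1}$, notes that $(L+tI)^{-1}$ is a diagonal operator on $\ell^2$ with entries tending to zero (hence compact, by Assumption \ref{ass:A(1)}), and that the second factor is merely bounded by the Neumann series; compactness of the product then gives the conclusion via the standard spectral theory of operators with compact resolvent. That route needs only \emph{boundedness} of $DG(U_0)(L+tI)^{-1}$ on $\ell^2$, not compactness of $DG(U_0)$ itself. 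You instead prove that $DG(U_0): X_d \to \ell^2$ (equivalently $DG(U_0)L^{-1}: \ell^2\to\ell^2$) is compact via the band-plus-decay argument, then observe that $L-\lambda I: X_d\to\ell^2$ is Fredholm of index zero for \emph{every} $\lambda$ (boundedly invertible off the discrete set $\{l(n/2d)\}$, and a finite-rank perturbation of an invertible diagonal operator on that set, since only finitely many $n$ can hit any given value), and close with a Weyl/Fredholm perturbation argument as in Lemma \ref{lem : separation of the spectrum sigma 0}. Your band-plus-decay verification is sound: each term of $DG(U_0)L^{-1}$ has matrix entries $W_{n-m}\,g^p_{j,k}(m/2d)/l(m/2d)$ with $W$ finitely supported (since $U_0=\pi^N U_0$), and Young's inequality gives $\|DG(U_0)L^{-1}\pi_K\|_{2}\lesssim \sup_{|m|>K}|g^p_{j,k}(m/2d)/l(m/2d)|\to 0$ by Assumption \ref{ass : LinvG in L1}, so the operator is a norm-limit of finite-rank truncations. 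The trade-off: the paper's proof is shorter and demands less of $DG(U_0)$; your route does more work up front, but in exchange it hands you, as an explicit by-product, the compactness of $DG(U_0)L^{-1}:\ell^2\to\ell^2$, which the paper actually invokes later in the proof of Lemma \ref{lem : gershgorin matrix} without giving a separate argument. (As in the paper's own Lemma \ref{lem : separation of the spectrum sigma 0}, the final appeal to ``the Fredholm alternative'' for finite \emph{algebraic} multiplicity is a slight abuse; strictly one should invoke analytic Fredholm theory or the compact-resolvent structure, but this is a shared shorthand, not a gap specific to your argument.)
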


\begin{proof}
Let $t$ be as in Proposition \ref{prop : minimum of the spectrum full}. Using the proof of Proposition \ref{prop : minimum of the spectrum full}, we have that $L+tI : X_d \to \ell^2$ is boundedly invertible. Then, using Assumption \ref{ass : LinvG in L1} and the proof of Lemma 2.4 in \cite{unbounded_domain_cadiot}, we obtain that $DG(U_0)(L+t I)^{-1} : \ell^2 \to \ell^2$ is bounded. Consequently, we have that 
\begin{align*}
    (DF(U_0)+tI)^{-1} = (L+tI)^{-1}\left(I + DG(U_0)(L+t)^{-1}\right)^{-1}.
\end{align*}
Now, we obtain that $(L+tI)^{-1} : \ell^2 \to \ell^2$ is compact thanks to Assumption \ref{ass:A(1)}. Then,\\ $\left(I + DG(U_0)(L+t)^{-1}\right)^{-1} : \ell^2 \to \ell^2$ is a bounded linear operator, which implies that $(DF(U_0)+tI)^{-1} : \ell^2 \to \ell^2$ is compact. This concludes the proof.
\end{proof}

Now, we expose the construction of the pseudo-diagonalization of $DF(U_0)$. In particular, we describe the construction of $P$, leading to the establishment of $\mathcal{D}$.

We choose the linear operator $P$ as  $P = P^N + \pi_N : \ell^2 \to \ell^2$ where $P^N = \pi^N P^N \pi^N$ and $P^N : \pi^N \ell^2 \to \pi^N\ell^2$ is invertible. 
In particular, $P$ has a bounded inverse given by $P^{-1} = (P^N)^{-1} + \pi_N$ where $(P^N)^{-1}$ has to be understood as the inverse of $P^N : \pi^N \ell^2 \to \pi^N\ell^2$. In practice we choose $P^N$ such that the columns of $P^N$ approximate the eigenvectors of $\pi^N DF(U_0)\pi^N$. This construction is achieved thanks to rigorous numerics. In fact, rigorous numerics also allow to establish the rigorous inverse of $P^N$.  Then we define 
\begin{equation}\label{eq : pseudo diagonalization DF}
    \mathcal{D} = P^{-1}DF(U_0)P, ~~ S \bydef diag(\mathcal{D}) ~~ \text{ and } ~~ R \bydef \mathcal{D} - S,
\end{equation}
where $diag(\mathcal{D})$ is the diagonal part of the infinite matrix $\mathcal{D}$. Moreover, we define the diagonal entries $(\lambda_n)_{n \in \mathbb{Z}^m}$ of $S$ as 
\begin{equation}\label{def : lambda n}
    (SU)_n = \lambda_n U_n
\end{equation}
for all $n \in \mathbb{Z}^m$ and all $U =(U_n)_{n \in \mathbb{Z}^m} \in \ell^2$.
Our goal is to enclose the spectrum of $DF(U_0)$ thanks to  the $\lambda_n$'s. This is achieved using a generalized Gershgorin theorem from \cite{FARID19917}, which we present in the next lemma under our notations.
\begin{lemma}\label{lem : gershgorin matrix}
Given $n \in \mathbb{Z}^m$, define $r_n \geq 0$ as 
\begin{align*}
    r_n \bydef \sum_{k\in \mathbb{Z}^m} \left|R_{n,k}\right|,
\end{align*}
where $R$ is defined in \eqref{eq : pseudo diagonalization DF}.
 Moreover, let us denote by $B_n \bydef \overline{B_{r_n}(\lambda_n)} \subset \mathbb{C}$ the $n$-th Gershgorin (closed) disc of radius $r_n$ and centered at $\lambda_n$ (defined in \eqref{def : lambda n}). 
 Then the spectrum of $DF(U_0)$ consists of eigenvalues lying in the set $\bigcup_{n \in \mathbb{Z}^m} B_n$. Moreover, any set of $k$ Gershgorin discs whose union is disjoint from all other Gershgorin discs intersects $\sigma\left(DF(U_0)\right)$ in a finite set of eigenvalues of $DF(U_0)$ with total algebraic multiplicity $k.$
\end{lemma}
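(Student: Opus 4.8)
The plan is to transfer everything to the pseudo-diagonalized operator $\mathcal{D} = P^{-1}DF(U_0)P$ and then adapt the classical Gershgorin argument, the infinite-dimensionality being absorbed by the compactness obtained in Lemma \ref{eq : spectrum of DFU0 is point}. Since $P$ and $P^{-1}$ are bounded operators on $\ell^2$ preserving the domain $X_d$, conjugation by $P$ preserves the spectrum and maps generalized eigenspaces onto generalized eigenspaces, so algebraic multiplicities agree and it suffices to prove both assertions for $\mathcal{D}$. Moreover, with $t$ as in Proposition \ref{prop : minimum of the spectrum full}, the proof of Lemma \ref{eq : spectrum of DFU0 is point} shows that $(\mathcal{D}+tI)^{-1}$ is compact; writing, for any $\lambda \in \mathbb{C}$,
\[
\mathcal{D}-\lambda I = (\mathcal{D}+tI)\bigl(I-(\lambda+t)(\mathcal{D}+tI)^{-1}\bigr),
\]
we see that $\mathcal{D}-\lambda I$ is Fredholm of index $0$, hence $\sigma(\mathcal{D})$ consists of isolated eigenvalues of finite algebraic multiplicity, and $\lambda \in \sigma(\mathcal{D})$ if and only if $\ker(\mathcal{D}-\lambda I)\neq\{0\}$.

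For the inclusion $\sigma(\mathcal{D})\subseteq\bigcup_n B_n$, fix $\lambda\notin\bigcup_n B_n$, so that $|\lambda-\lambda_n|>r_n$ for every $n$ (in particular each $r_n$ is finite, so each row of $R$ lies in $\ell^1$). If $x\in\ell^2\setminus\{0\}$ satisfied $(\mathcal{D}-\lambda I)x=0$, then, since $x\in\ell^2\subset c_0$, there is an index $n_0$ with $|x_{n_0}|=\|x\|_\infty>0$; reading off the $n_0$-th component of $(S+R-\lambda I)x=0$ and using $R_{n_0,n_0}=0$ gives $(\lambda_{n_0}-\lambda)x_{n_0}=-\sum_{k\in\mathbb{Z}^m}R_{n_0,k}x_k$, whence
\[
|\lambda_{n_0}-\lambda|\,|x_{n_0}| \;\le\; \sum_{k\in\mathbb{Z}^m}|R_{n_0,k}|\,|x_k| \;\le\; r_{n_0}\,|x_{n_0}|,
\]
so $|\lambda_{n_0}-\lambda|\le r_{n_0}$, i.e.\ $\lambda\in B_{n_0}$, a contradiction. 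Hence $\ker(\mathcal{D}-\lambda I)=\{0\}$ and, by the first paragraph, $\lambda\notin\sigma(\mathcal{D})$; thus $\sigma(DF(U_0))=\sigma(\mathcal{D})\subseteq\bigcup_n B_n$ and consists of eigenvalues.

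The multiplicity statement is obtained by a homotopy argument in the spirit of \cite{FARID19917} (see also \cite{breden2025turinginstabilitynonlocalheterogeneous}). Let $I\subset\mathbb{Z}^m$ be finite with $U\bydef\bigcup_{n\in I}B_n$ disjoint from $B_m$ for every $m\notin I$. Since $|\lambda_n|\to\infty$ (by \eqref{def : lambda n}, Assumption \ref{ass:A(1)} and Assumption \ref{ass : LinvG in L1}) while the radii $r_n$ are finite and small relative to $|\lambda_n|$ away from the finite block, the set $\bigcup_{m\notin I}B_m$ is closed; being disjoint from the compact set $U$ it lies at a positive distance from it, so we may fix a contour $\Gamma$ (a finite union of smooth Jordan curves) separating $U$ from $\bigcup_{m\notin I}B_m$, with $U$ contained in the interior of $\Gamma$. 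For $s\in[0,1]$ set $\mathcal{D}(s)\bydef S+sR$; by the same reasoning as in Lemma \ref{eq : spectrum of DFU0 is point} (using that $R$ is bounded, resp.\ controlled by $S+tI$), each $\mathcal{D}(s)+tI$ is boundedly invertible with compact inverse for $t$ as above, and the Gershgorin discs of $\mathcal{D}(s)$ are $\overline{B_{sr_n}(\lambda_n)}\subseteq B_n$, so by the previous paragraph $\sigma(\mathcal{D}(s))\subseteq\bigcup_n B_n$ and $\Gamma\cap\sigma(\mathcal{D}(s))=\varnothing$ for all $s$. Therefore the Riesz projections $Q(s)\bydef\frac{1}{2\pi i}\oint_\Gamma (\zeta I-\mathcal{D}(s))^{-1}\,d\zeta$ are well-defined finite-rank projections depending norm-continuously on $s$ (the resolvents being uniformly bounded on the compact set $\Gamma\times[0,1]$), and an integer-valued continuous function is constant, so $\operatorname{rank}Q(1)=\operatorname{rank}Q(0)$. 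At $s=0$, $\mathcal{D}(0)=S$ is diagonal, its eigenvalues in the interior of $\Gamma$ are exactly $\{\lambda_n:n\in I\}$ (for $n\in I$, $\lambda_n\in B_n\subset\operatorname{int}\Gamma$; for $m\notin I$, $\lambda_m\in B_m$ lies outside $\Gamma$), and the algebraic multiplicity of an eigenvalue $\mu$ of a diagonal operator equals $\#\{n:\lambda_n=\mu\}$, so $\operatorname{rank}Q(0)=\#\{n:\lambda_n\in\operatorname{int}\Gamma\}=|I|=k$. Hence $\mathcal{D}=\mathcal{D}(1)$ has exactly $k$ eigenvalues in $\operatorname{int}\Gamma$ counted with algebraic multiplicity; since $\sigma(\mathcal{D})\subseteq\bigcup_n B_n$ and $\operatorname{int}\Gamma\cap\bigcup_n B_n = U$, these are precisely the eigenvalues of $\mathcal{D}$ in $U$, and transporting through $P$ gives the claim for $DF(U_0)$.

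The main obstacle is exactly the passage from the finite-dimensional Gershgorin theorem to the present infinite-dimensional setting: the classical proof selects the eigenvector coordinate of largest modulus and counts eigenvalues inside a curve by a determinant/continuity argument, neither of which applies verbatim. The first point is handled by the elementary inclusion $\ell^2\subset c_0$; the second requires that the whole deformed family $(\mathcal{D}(s))_{s\in[0,1]}$ keep a discrete spectrum with resolvents varying continuously along $\Gamma$, which is where the compactness of $(\mathcal{D}+tI)^{-1}$ from Lemma \ref{eq : spectrum of DFU0 is point}, the boundedness/relative compactness of $R$, and the growth $|\lambda_n|\to\infty$ (needed to build the separating contour) are essential.
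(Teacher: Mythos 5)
Your proof is correct, but it follows a genuinely different route from the paper's. The paper does not redo the Gershgorin argument at all: it verifies the hypotheses of Theorem~2.1 in \cite{FARID19917} for a shifted operator $\mathcal{D}+sI$ --- namely a uniform diagonal-dominance inequality $|\lambda_n + s| > \tfrac{1}{2}\sum_{k}|R_{n,k}|$, obtained by treating $n\in I^N$ and $n\notin I^N$ separately (using, respectively, the finite-dimensional range of $(P^N)^{-1}DF(U_0)P^N$, and the compactness of $DG(U_0)L^{-1}$ together with $|l(\tilde n)|\to\infty$), plus the boundedness of $DG(U_0)L^{-1}$ on $\ell^\infty$ coming from the finite support of $U_0$ --- and then cites that theorem as a black box. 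What you wrote is instead a self-contained re-derivation of the cited result specialized to $\mathcal{D}$: the inclusion $\sigma(\mathcal{D})\subseteq\bigcup_n B_n$ via the classical maximal-coordinate argument (available since $\ell^2\subset c_0$) combined with the Fredholm index-zero reduction already supplied by Lemma~\ref{eq : spectrum of DFU0 is point}, and the multiplicity count via the homotopy $\mathcal{D}(s)=S+sR$ and Riesz projections along a separating contour. Both are valid; the paper's route is shorter because it offloads the contour/counting argument to the reference, while yours makes explicit the ingredients (compact resolvent of each $\mathcal{D}(s)+tI$, local finiteness of the disks, the diagonal endpoint giving the count $k$). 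One point you glossed over that deserves a sentence: closedness of $\bigcup_{m\notin I}B_m$ (hence existence of the separating contour) needs not merely each $r_n<\infty$ but $\sup_n r_n<\infty$; this does hold here because for $n\notin I^{N_{\mathbb{G}}N}$ the row sum of $R$ reduces to an off-diagonal row $\ell^1$-norm of $DG(U_0)$, a discrete convolution operator determined by the finitely supported $U_0$, and those row norms are uniformly bounded in $n$, but the argument should be stated.
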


\begin{proof}
First, using the construction of $P$, we can decompose $\mathcal{D}$ as follows
\begin{align}\label{eq : decomposition D}
    \mathcal{D} = \left(P^{N}\right)^{-1}DF(U_0)P+ \pi_N DF(U_0)P.
\end{align}
Note that $\left(P^{N}\right)^{-1}DF(U_0)P^N$ has a  finite dimensional range and therefore, there exists $s_1 \in \mathbb{C}$ big enough in amplitude such that 
\begin{align*}
    |\lambda_n + s_1| > \frac{1}{2}  \sum_{k\in \mathbb{Z}^m, ~k\neq n} \left|\mathcal{D}_{n,k}\right| = \frac{1}{2}  \sum_{k\in \mathbb{Z}^m} \left|R_{n,k}\right|
\end{align*}
for all $n \in I^N$. Then, since $DG(U_0)L^{-1} : \ell^2 \to \ell^2$ is compact  and $|l(\tilde{n})| \to \infty$ as $|n| \to \infty$, there exists $s_0 \in \mathbb{C}$ sufficiently big in amplitude such that 
\[
|l(\tilde{n}) + s_0| > \frac{1}{2} \sum_{k \in \mathbb{Z}^m, ~k \neq n} \left|\left(DG(U_0)\right)_{n,k}\right|
\]
for all $n \in \mathbb{Z}^m$. This implies that 
\begin{align*}
     |\lambda_n + s_0| > \frac{1}{2}  \sum_{k\in \mathbb{Z}^m} \left|R_{n,k}\right|
\end{align*}
for all $n \in \mathbb{Z}^m\setminus I^N$. Consequently, we can find some $s \in \mathbb{C}$ big enough in amplitude such that 
\begin{align*}
     |\lambda_n + s| >  \frac{1}{2}  \sum_{k\in \mathbb{Z}^m } \left|R_{n,k}\right|
\end{align*}
for all $n \in \mathbb{Z}^m$. Consequently, the matrix $\mathcal{D} + sI$ satisfies the hypotheses (1) and (2) of Theorem 2.1 in \cite{FARID19917}. Now, using that $U_0$ possesses a finite number of non-zero coefficients, we can easily prove that $DG(U_0)L^{-1} : \ell^\infty \to \ell^\infty$ is a bounded linear operator. Consequently, hypothesis (3) is satisfied as well for $p = \infty$ and we conclude the proof using the aforementioned theorem in \cite{FARID19917}.
\end{proof}

Using the above result, we can derive explicit values for the radii $(r_n)$ based on the operator $P$. In particular, we separate final dimensional computations, which will be handled rigorously on the computer, and infinite sums, which will have to be estimated analytically.

\begin{lemma}
Let $n \in \mathbb{Z}^m$ and let $r_n$ by defined as in Lemma \ref{lem : gershgorin matrix}, then
    \begin{align*}
        r_n = \begin{cases}
          \displaystyle \sum_{k \in I^N, k \neq n} \left|\left(\left(P^{N}\right)^{-1}DF(U_0)P^N\right)_{n,k}\right| + \sum_{k \in \mathbb{Z}^m \setminus I^N} \left|\left(\left(P^{N}\right)^{-1}DG(U_0)\right)_{n,k}\right| &\text{ if } n \in I^N\\
           \displaystyle \sum_{k \in I^N} \left|\left(DG(U_0)P^N\right)_{n,k}\right| + \sum_{k \in \mathbb{Z}^m \setminus I^N,  k \neq n} \left|\left(DG(U_0)\right)_{n,k}\right| &\text{ if } n \in \mathbb{Z}^m \setminus I^N 
        \end{cases}
    \end{align*}
\end{lemma}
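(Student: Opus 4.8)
The plan is to read off the entries of $\mathcal{D}$ block by block. Writing $P = P^N + \pi_N$ and $P^{-1} = (P^N)^{-1} + \pi_N$, I would first expand
\begin{align*}
\mathcal{D} = P^{-1}DF(U_0)P = (P^N)^{-1}DF(U_0)P^N + (P^N)^{-1}DF(U_0)\pi_N + \pi_N DF(U_0)P^N + \pi_N DF(U_0)\pi_N.
\end{align*}
Since $(P^N)^{-1} = \pi^N(P^N)^{-1}\pi^N$ and $P^N = \pi^N P^N\pi^N$, while $\pi_N\pi_N = \pi_N$ and $\pi^N\pi_N = 0$, these four operators are supported respectively on the mutually disjoint index blocks $I^N\times I^N$, $I^N\times(\mathbb{Z}^m\setminus I^N)$, $(\mathbb{Z}^m\setminus I^N)\times I^N$ and $(\mathbb{Z}^m\setminus I^N)\times(\mathbb{Z}^m\setminus I^N)$. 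Hence, for each pair $(n,k)$, the entry $\mathcal{D}_{n,k}$ is contributed by exactly one of the four terms.

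Next I would simplify the two off-diagonal blocks using that $DF(U_0) = L + DG(U_0)$ with $L$ diagonal. Because $\pi^N\pi_N = 0$, the diagonal operator $L$ satisfies $\pi^N L\pi_N = 0 = \pi_N L\pi^N$, so that $(P^N)^{-1}L\pi_N = 0$ and $\pi_N L P^N = 0$; therefore the second term reduces to $(P^N)^{-1}DG(U_0)\pi_N$ and the third to $\pi_N DG(U_0)P^N$. In the fourth term, $\pi_N L\pi_N$ is diagonal, so it only affects $diag(\mathcal{D})$ and disappears upon passing to $R = \mathcal{D} - diag(\mathcal{D})$.

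It then remains to collect terms in $r_n = \sum_{k\in\mathbb{Z}^m}|R_{n,k}|$, distinguishing whether $n\in I^N$ or not. If $n\in I^N$, the diagonal entry $\mathcal{D}_{n,n}$ lies in the $I^N\times I^N$ block, while $\mathcal{D}_{n,k} = \left((P^N)^{-1}DF(U_0)P^N\right)_{n,k}$ for $k\in I^N$ and $\mathcal{D}_{n,k} = \left((P^N)^{-1}DG(U_0)\right)_{n,k}$ for $k\in\mathbb{Z}^m\setminus I^N$ (the right factor $\pi_N$ merely restricting to those columns); subtracting the diagonal entry then yields the first displayed formula for $r_n$. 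If $n\in\mathbb{Z}^m\setminus I^N$, the left projection $\pi_N$ is automatic, so $\mathcal{D}_{n,k} = \left(DG(U_0)P^N\right)_{n,k}$ for $k\in I^N$ and $\mathcal{D}_{n,k} = \left(\pi_N L\pi_N\right)_{n,k} + \left(DG(U_0)\right)_{n,k}$ for $k\in\mathbb{Z}^m\setminus I^N$; now $\mathcal{D}_{n,n}$ lies in the $(\mathbb{Z}^m\setminus I^N)\times(\mathbb{Z}^m\setminus I^N)$ block, and removing it cancels the diagonal $L$-contribution, giving the second formula. Absolute convergence of all the infinite sums involved is already guaranteed, since the radii $r_n$ were shown to be finite in Lemma~\ref{lem : gershgorin matrix}.

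The computation is pure bookkeeping rather than analytic, so I do not expect a genuine obstacle; the one point requiring care is the vanishing of the mixed diagonal-projection products $(P^N)^{-1}L\pi_N$ and $\pi_N L P^N$, together with correctly tracking the block containing the diagonal entry $\mathcal{D}_{n,n}$ --- the $I^N\times I^N$ block when $n\in I^N$ and the $(\mathbb{Z}^m\setminus I^N)\times(\mathbb{Z}^m\setminus I^N)$ block when $n\notin I^N$ --- since the two cases of the claimed identity arise precisely from this distinction.
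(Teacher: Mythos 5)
Your proof is correct and uses the same four-term block expansion of $\mathcal{D} = P^{-1}DF(U_0)P$ as the paper, together with the same observation that $L$ being diagonal kills the mixed products $(P^N)^{-1}L\pi_N$ and $\pi_N L P^N$ and confines the remaining $\pi_N L \pi_N$ contribution to the diagonal. You simply spell out the bookkeeping (block disjointness, where $\mathcal{D}_{n,n}$ lives in each case) that the paper's terse proof leaves implicit.
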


\begin{proof}
Using the definition of $P$ and $P^{-1}$, we have 
\begin{align*}
    \mathcal{D} =  \left(P^{N}\right)^{-1}DF(U_0)P^N + \left(P^{N}\right)^{-1}DF(U_0)\pi_N + \pi_N DF(U_0)P^N + \pi_N DF(U_0)\pi_N.
\end{align*}
Now, since $L$ is diagonal, we have that $L\pi_N = \pi_N L\pi_N$ and therefore 
\[
\left(P^{N}\right)^{-1}DF(U_0)\pi_N  = \left(P^{N}\right)^{-1}DG(U_0)\pi_N ~~ \text{ and } ~~ \pi_N DF(U_0)P^N = \pi_N DG(U_0)P^N.
\]
This concludes the proof.
\end{proof}

In practice, given $n \in I^N$, then the computation of $r_n$ involves a finite number of vector norms. Indeed, since $G$ is polynomial of order $N_\mathbb{G}$, we have
\[
\sum_{k \in \mathbb{Z}^m \setminus I^N} \left|\left(\left(P^{N}\right)^{-1}DG(U_0)\right)_{n,k}\right| = \sum_{k \in I^{N_\mathbb{G} N} \setminus I^N} \left|\left(\left(P^{N}\right)^{-1}DG(U_0)\right)_{n,k}\right|.
\]
In practice, for each $n \in I^N$, $\lambda_n$ and $r_n$ can be obtained using rigorous numerics on vectors and matrices. For the case $n \in \mathbb{Z}^m \setminus I^N$, notice that 
\[
\lambda_n = \left(\pi_N(L+DG(U_0))\pi_N\right)_{n,n} = l(\tilde{n}) + (DG(U_0))_{n,n}.
\]

Now, suppose that $n \in \mathbb{Z}^m \setminus I^{N_\mathbb{G}N}$, then since ${G}$ is polynomial of order $N_\mathbb{G}$, then 
\[
\sum_{k \in I^N} \left|\left(DG(U_0)P^N\right)_{n,k}\right| = 0. 
\]
This implies that for all $n \in I^{N_\mathbb{G}N} \setminus I^N$, $r_n$ can be computed using a finite number of vector norms. Moreover, if $n \in \mathbb{Z}^m \setminus I^{N_\mathbb{G}N}$, we have
\[
r_n = \sum_{k \in \mathbb{Z}^m \setminus I^N,  k \neq n} \left|\left(DG(U_0)\right)_{n,k}\right|
\]
which can be estimated using Banach algebra properties on sequences (for instance $\|U*V\|_1 \leq  \|U\|_1\|V\|_1$ for all $U,V \in \ell^1$) or Young's inequality for the convolution (for instance $\|U*V\|_2 \leq  \|U\|_2\|V\|_1$ for all $U \in \ell^2$, $V\in \ell^1$). 

Now that we provided a general strategy for the rigorous enclosure of the spectrum of $DF(U_0)$, it remains to relate the eigenvalues of  $DF(U_0)$ with those of $D\mathbb{F}(\tilde{u})$. 
%
% Let $\sigma^N$ be defined as 
% \begin{align*}
%     \sigma^N \bydef \{z \in \mathbb{C}, ~ z \in B_n \text{ for some } n \in \mathbb{Z}^m \setminus I^N\}
% \end{align*}
% where $B_n$ is given in Lemma \ref{lem : gershgorin matrix}. In practice, we choose $N$ big enough so that there exists $\beta >0$ such that 
% \begin{equation}\label{eq : requirement distance sets}
%     \text{dist}(\sigma^N,\sigma_0) \geq \beta.
% \end{equation}
% In other words, the sets $\sigma^N$ and $\sigma_0$ are uniformly separated. Since we know that $\text{Eig}(D\mathbb{F}(\tilde{u})) \subset \sigma_0$ (cf. Lemma \ref{lem : separation of the spectrum sigma 0}), the above condition implies that $\text{dist}(\text{Eig}(D\mathbb{F}(\tilde{u})),\sigma^N) \geq \beta$. In particular, this means that the Gershgorin disks of interest for the control of $\text{Eig}(D\mathbb{F}(\tilde{u}))$ are  $(B_n)_{n \in I^N}$. Now, given $\lambda \in \text{Eig}(D\mathbb{F}(\tilde{u}))$, we need to determine if there exists $\lambda_0 \in \text{Eig}(DF(U_0))$ such that $|\lambda-\lambda_0|$ can be controled explicitly, and can be proven to be small. This is achieved in the next section. 
%
%
%
%
%
%
\section{Control of the spectrum of \boldmath $D\mathbb{F}(\tilde{u})$ \unboldmath thanks to \boldmath $DF(U_0)$ \unboldmath}\label{sec : control spectrum DF unbounded}

In this section, we demonstrate how a subset of the eigenvalues of $D\mathbb{F}(\tilde{u})$ can be enclosed thanks to the ones of $DF(U_0)$. In particular, we want to use the analysis introduced in \cite{unbounded_domain_cadiot} to create Gershgorin disks containing eigenvalues of $D\mathbb{F}(\tilde{u})$ in a closed Jordan domain $\mathcal{J} \subset \overline{\sigma_\delta}$, for some $\delta>0$. Our strategy is to construct a first homotopy from $\mathcal{D}$ to $D\mathbb{F}(u_0)$ and a second one from $D\mathbb{F}(u_0)$ to $D\mathbb{F}(\tilde{u})$. Then, we enclose the eigenvalues in $\mathcal{J}$ of each operator along the homotopies thanks to disks centered at the diagonal entries of $\mathcal{D}$. 

In the next lemma, we provide a localization of the eigenvalues of $D\mathbb{F}(\tilde{u})$ thanks to the pseudo-diagonalization \eqref{eq : pseudo diagonalization DF}. In particular, we provide explicit disks centered at the diagonal entries of $\mathcal{D}$ containing the eigenvalues of $D\mathbb{F}(\tilde{u})$ in $\mathcal{J}$.
\begin{lemma}\label{lem : link of spectrum unbounded to U0}
  Let $\delta>0$, let $\mathcal{J}$ be a closed Jordan domain in $\overline{\sigma_\delta}$ and let $\lambda \in \mathcal{J}\cap \sigma\left(D\mathbb{F}(\tilde{u})\right)$. 
Furthermore, let $t \in \mathbb{C}$ be given as in Lemma \ref{prop : minimum of the spectrum full} and recall  $R,S$ defined in \eqref{eq : pseudo diagonalization DF}.  Moreover, suppose that $S+tI$ is invertible.
  Now, let us introduce various constants satisfying the following 
  \begin{align*}
     \mathcal{Z}_{u,1} &\geq  \sup_{\mu \in \mathcal{J}} \left\|\out \left(\mathbb{L} - \mu I\right)^{-1} D\mathbb{G}(u_0)\right\|_2\\
\mathcal{Z}_{u,2} & \geq \sup_{\mu \in \mathcal{J}} \| \cha \left(\Gamma^\dagger\left((L- \mu I )^{-1}\right) - (\mathbb{L}- \mu I)^{-1}\right)D\mathbb{G}(u_0) \|_2\\
\mathcal{Z}_{u,3} & \geq \sup_{\mu \in \mathcal{J}}  \|\pi^N (S+ tI)^{-1}P^{-1}(L-\mu I)\|_2 \| \cha \left(\Gamma^\dagger\left((L- \mu I )^{-1}\right) - (\mathbb{L}- \mu I)^{-1}\right)D\mathbb{G}(u_0) \|_2\\
     \mathcal{C}_1   &\geq  \frac{1}{r_0}\sup_{\mu \in \mathcal{J}} \left\| \left(\mathbb{L} - \mu I\right)^{-1} \left(D\mathbb{G}(u_0)-D\mathbb{G}(\tilde{u})\right)\right\|_2\\
     \mathcal{C}_2   &\geq  \frac{1}{r_0}\sup_{\mu \in \mathcal{J}} \|\pi^N (S+ tI)^{-1}P^{-1}(L-\mu I)\|_2 \left\| \cha \left(\mathbb{L} - \mu I\right)^{-1} \left(D\mathbb{G}(u_0)-D\mathbb{G}(\tilde{u})\right)\right\|_2\\
     Z_{1,1} &\geq \sup_{\mu \in \mathcal{J}}\|\pi_N (L -\mu I)^{-1}R \pi^N\|_2, ~~~~ Z_{1,2} \geq \sup_{\mu \in \mathcal{J}}\|\pi_N (L -\mu I)^{-1}R\pi_N\|_2\\
       Z_{1,3} &\geq\| \pi^N(S+ tI)^{-1}R\pi^N\|_2, ~~~~ Z_{1,4} \geq \| \pi^N(S+ tI)^{-1}R\pi_N\|_2.
  \end{align*}
  If 
  \begin{equation}\label{eq : condition C1 r0}
      \mathcal{C}_1r_0 <1,
  \end{equation}
 we define $\kappa_1$ as 
$\kappa_1 \bydef\frac{\mathcal{Z}_{u,1}+\mathcal{C}_1r_0}{1-\mathcal{C}_1r_0}.$
If in addition
\begin{equation}\label{eq : condition for kappa1 and C1 r0}
    1-Z_{1,2}-\mathcal{Z}_{u,2} - (1+\kappa_1^2)^\frac{1}{2}\mathcal{C}_1r_0 > 0,
\end{equation}
 then we define
\begin{equation}\label{eq : value for epsilon infinity n}
\begin{aligned}
\kappa_2 &\bydef \frac{Z_{1,1} + (\mathcal{Z}_{u,2} + (1+\kappa_1^2)^\frac{1}{2}\mathcal{C}_1r_0)\|P^N\|_2}{1-Z_{1,2}-\mathcal{Z}_{u,2} - (1+\kappa_1^2)^\frac{1}{2}\mathcal{C}_1r_0}\\
      \epsilon_n^{(\infty)} &\bydef |\lambda_n + t|\left(Z_{1,3} + Z_{1,4} \kappa_2 + \left(\mathcal{Z}_{u,3} + \mathcal{C}_2r_0(1+\kappa_1^2)^\frac{1}{2} \right)\left(\|P^N\|_2 +   \kappa_2 \right)   \right)
\end{aligned}
\end{equation}
for all $n \in \mathbb{Z}^m$
and obtain that $\lambda \in \overline{B_{\epsilon_j^{(\infty)}}(\lambda_j)}$ for some $j \in \mathbb{Z}^m$, where $\lambda_j$ is defined in \eqref{def : lambda n}.
 \end{lemma}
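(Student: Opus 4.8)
The plan is to chain together two perturbation arguments — first passing from $D\mathbb{F}(\tilde u)$ to $D\mathbb{F}(u_0)$ using the defect bound \eqref{eq : defect with true solution}, and then from $D\mathbb{F}(u_0)$ to the Fourier-coefficients operator via the transformations $\gamma,\Gamma^\dagger$ — and to track throughout how an eigenfunction of $D\mathbb{F}(\tilde u)$ at $\lambda \in \mathcal{J}$ gets mapped to a near-eigenvector of the pseudo-diagonalization $\mathcal{D}=S+R$. Concretely, let $u \in \mathcal{H}$ be a normalized eigenfunction, $(D\mathbb{F}(\tilde u)-\lambda I)u = 0$, i.e. $(\mathbb{L}-\lambda I)u = -D\mathbb{G}(\tilde u)u$. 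Since $\lambda \in \mathcal{J}\subset\overline{\sigma_\delta}$, $\mathbb{L}-\lambda I:\mathcal{H}\to L^2$ is boundedly invertible, so $u = -(\mathbb{L}-\lambda I)^{-1}D\mathbb{G}(\tilde u)u$. Splitting $D\mathbb{G}(\tilde u) = D\mathbb{G}(u_0) + (D\mathbb{G}(\tilde u)-D\mathbb{G}(u_0))$ and using $\|D\mathbb{G}(\tilde u)-D\mathbb{G}(u_0)\| \lesssim r_0$ together with $\mathcal{C}_1$, one first controls the ``outside'' part $\out u$ and the portion of $u$ supported away from $\overline{\om}$: this is exactly what the condition $\mathcal{C}_1 r_0 < 1$ and the definition of $\kappa_1 = \frac{\mathcal{Z}_{u,1}+\mathcal{C}_1 r_0}{1-\mathcal{C}_1 r_0}$ are designed to produce, yielding $\|\out u\|_2 \le \kappa_1\|\cha u\|_2$ and hence $\|u\|_2^2 \le (1+\kappa_1^2)\|\cha u\|_2^2$.

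Next I would transfer the restricted function $\cha u$ to Fourier space. Setting $V = \gamma(\cha u)$ (up to the normalization $\sqrt{|\om|}$ from Lemma \ref{lem : gamma and Gamma properties}), the equation $(\mathbb{L}-\lambda I)u = -D\mathbb{G}(\tilde u)u$ becomes, after applying $\gamma$ and inserting $\Gamma^\dagger((L-\lambda I)^{-1})$ in place of $(\mathbb{L}-\lambda I)^{-1}$, an approximate identity $(L-\lambda I)V \approx -DG(U_0)V$ with errors governed precisely by $\mathcal{Z}_{u,2}$ (the mismatch between $\Gamma^\dagger((L-\mu I)^{-1})$ and $(\mathbb{L}-\mu I)^{-1}$), by $\mathcal{Z}_{u,1}$, and by $\mathcal{C}_1 r_0$ for the $u_0\to\tilde u$ replacement. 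Then one splits $V = \pi^N V + \pi_N V$ and estimates the tail: the condition \eqref{eq : condition for kappa1 and C1 r0} ensures $1-Z_{1,2}-\mathcal{Z}_{u,2}-(1+\kappa_1^2)^{1/2}\mathcal{C}_1 r_0 > 0$, which makes a Neumann-type inversion on the $\pi_N$-block valid and gives $\|\pi_N V\|_2 \le \kappa_2\|\pi^N V\|_{X_d}$ with $\kappa_2$ as in \eqref{eq : value for epsilon infinity n}, where the factor $\|P^N\|_2$ enters because we must pass through $P$ to talk about $\mathcal{D}$. Finally, conjugating by $P$ and projecting onto $S+tI$: write $W = P^{-1}V$, so $(\mathcal{D}-\lambda I)W \approx 0$ in the same approximate sense, and apply $\pi^N(S+tI)^{-1}$. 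Using $\mathcal{D} = S+R$ and $(S-\lambda I) = (S+tI) - (\lambda+t)I$, the component $\pi^N(S+tI)^{-1}W$ satisfies $\big(I - (\lambda+t)\pi^N(S+tI)^{-1}\big)(\pi^N W) = -\pi^N(S+tI)^{-1}[R + \text{errors}]W$; bounding the right-hand side by collecting the pieces controlled by $Z_{1,3}, Z_{1,4}\kappa_2, \mathcal{Z}_{u,3}, \mathcal{C}_2 r_0(1+\kappa_1^2)^{1/2}$ (each multiplied by $\|P^N\|_2 + \kappa_2$ as appropriate) and scaling out the factor $|\lambda_n+t|$, one concludes that $\min_j |\lambda - \lambda_j| \le \epsilon_j^{(\infty)}$ for the index $j$ realizing the largest component of $W$, i.e. $\lambda \in \overline{B_{\epsilon_j^{(\infty)}}(\lambda_j)}$.

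The main obstacle, I expect, is the bookkeeping of the three nested perturbations — each of $D\mathbb{F}(\tilde u)\to D\mathbb{F}(u_0)$, the $L^2(\R^m)\to\ell^2$ transfer, and the conjugation by $P$ followed by the $\pi^N/\pi_N$ splitting introduces its own error term and its own ``small denominator'' (the quantities $1-\mathcal{C}_1 r_0$, $1-Z_{1,2}-\mathcal{Z}_{u,2}-(1+\kappa_1^2)^{1/2}\mathcal{C}_1 r_0$, and implicitly the invertibility of $S+tI$ and of $I-(\lambda+t)\pi^N(S+tI)^{-1}$ on the relevant subspace). Keeping the inequalities in the right order so that $\kappa_1$ feeds into $\kappa_2$ which feeds into $\epsilon_n^{(\infty)}$ without circularity, and verifying that the operator-norm bounds defining $\mathcal{Z}_{u,i}$ and $\mathcal{C}_i$ are exactly the ones needed at each splitting step (in particular that $\cha$ and $\out$ are placed correctly so that Lemma \ref{lem : gamma and Gamma properties} applies), is where the care lies. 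The underlying analytic content — relative compactness of $D\mathbb{G}$, boundedness of $(\mathbb{L}-\mu I)^{-1}$ on $\mathcal{J}$, the isometry properties of $\gamma, \Gamma^\dagger$ on functions supported in $\overline{\om}$ — is all already available from \cite{unbounded_domain_cadiot} and the earlier lemmas, so no genuinely new estimate is required beyond assembling these ingredients.
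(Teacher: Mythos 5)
Your proposal is correct and follows the paper's proof essentially step for step: you derive $\|\out u\|_2 \le \kappa_1\|\cha u\|_2$ by splitting $D\mathbb{G}(\tilde u)$ around $D\mathbb{G}(u_0)$, transfer the eigenvalue equation to $\ell^2$ via $\gamma$ and $\Gamma^\dagger((L-\lambda I)^{-1})$, close a Neumann-type estimate on the $\pi_N$ tail to get $\kappa_2$, and finally conjugate by $P$ and project onto $\pi^N(S+tI)^{-1}$ to extract the Gershgorin-type bound $|\lambda-\lambda_j|\le\epsilon_j^{(\infty)}$. The only (inconsequential) imprecision is at the very end: the index $j$ is the one minimizing $|1-\tfrac{\lambda+t}{\lambda_j+t}|$ (a trivial lower bound on a diagonal operator applied to $\pi^N V$), not the one ``realizing the largest component of $W$''.
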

\begin{proof}
    Let $\lambda \in \mathcal{J}\cap \sigma\left(D\mathbb{F}(\tilde{u})\right)$. Then, using Lemma \ref{lem : separation of the spectrum sigma 0}, we have that $\lambda \in \text{Eig}\left(D\mathbb{F}(\tilde{u})\right)$, and we denote $u \in \mathcal{H}$ its associated eigenfunction such that $\|u\|_2=1$. In particular, we have
\begin{align*}
   \mathbb{L}u + D\mathbb{G}(\tilde{u}) u = \lambda u.
\end{align*}
% Equivalently, we have 
% \begin{align}
%     \cha \mathbb{L}u +   D\mathbb{G}(u_0) u &= \lambda \cha u + \cha\left(D\mathbb{G}(u_0) - D\mathbb{G}(\tilde{u})\right)u \label{eq : eigen on om}\\
%     \out \mathbb{L}u &= \lambda \out u - D\mathbb{G}(\tilde{u})u \label{eq : eigen on R minus om}.
% \end{align}
Then, because $\lambda \in \sigma_\delta$, we know that $\mathbb{L} - \lambda I$ is invertible and therefore
\begin{align}\label{eq : eigenvalue compact}
    u +  (\mathbb{L}-\lambda I)^{-1} D\mathbb{G}(\tilde{u}) u = 0. 
\end{align}
This implies that
\begin{align*}
    \|\out u\|_2 &= \left\|\out \left(\mathbb{L} - \lambda I\right)^{-1} D\mathbb{G}(\tilde{u}) u\right\|_2 \\
    &\leq  \left\|\out \left(\mathbb{L} - \lambda I\right)^{-1} \left(D\mathbb{G}(\tilde{u}) - D\mathbb{G}(u_0) \right)u\right\|_2  + \left\|\out \left(\mathbb{L} - \lambda I\right)^{-1} D\mathbb{G}(u_0) u\right\|_2 \\
    & \leq \mathcal{C} r_0( \|\cha u\|_2 +  \|\out u\|_2) + \mathcal{Z}_{u,1} \|\cha u\|_2
\end{align*}
 using that supp$(u_0) \subset \overline{\om}$. In particular, we obtain that
 \begin{equation}\label{eq : norm of u on omega}
      \|\out u\|_2 \leq \frac{\mathcal{Z}_{u,1}+\mathcal{C}_1r_0}{1-\mathcal{C}_1r_0} \|\cha u\|_2 = \kappa_1 \|\cha u \|_2.
 \end{equation}
%  \begin{equation}\label{eq : norm of u on omega}
%       \|\cha u\|_2^2 = 1 - \|\out u\|_2^2  \geq 1 - (\mathcal{C} r_0 + \mathcal{Z}_{u,1} \|\cha u\|_2)^2.
%  \end{equation}
%  Studying the variations of the polynomial $x \mapsto (1+\mathcal{Z}_{u,1}^2)x^2 + 2 \mathcal{C} r_0 \mathcal{Z}_{u,1} x + \mathcal{C}^2r_0^2 - 1$, we obtain that if $1+\mathcal{Z}_{u,1}^2 - \mathcal{C}^2r_0^2\mathcal{Z}_{u,1}^2 > 0$, then
% \begin{align}\label{eq : lower bound norm u in proof}
%     \|\cha u\|_2 \geq \frac{-\mathcal{C}r_0 + \sqrt{1+\mathcal{Z}_{u,1}^2 - \mathcal{C}^2r_0^2\mathcal{Z}_{u,1}^2}}{1+\mathcal{Z}_{u,1}^2} >0.
% \end{align}
Now, let us go back to $u = - (\mathbb{L}-\lambda I)^{-1}D\mathbb{G}(\tilde{u}) u$. In particular, defining $\mathbb{K} : L^2 \to L^2$ as $\mathbb{K} \bydef \Gamma^\dagger_d\left((L_d-\lambda)^{-1} DG_d(U_0)\right)$, where $\Gamma^\dagger_d$ is given in \eqref{def : Gamma and Gamma dagger}, we get
\begin{align*}
    u + \mathbb{K}u = \left(\mathbb{K} - (\mathbb{L}-\lambda I)^{-1}D\mathbb{G}(\tilde{u})\right) u,
\end{align*}
which, multiplied by $\cha$, implies
\begin{align*}
    \cha u + \cha \mathbb{K}u = \cha \left(\mathbb{K} - (\mathbb{L}-\lambda I)^{-1}D\mathbb{G}(\tilde{u})\right) u.
\end{align*}
Now, denoting $U \bydef \gamma_d(u) \in \ell^2$ and applying $\gamma_d$, the above equation yields
\begin{align}\label{eq : identity for U as an eigenvector}
   DF(U_0)U-\lambda U= (L-\lambda I) \gamma \left( \left(\mathbb{K} - (\mathbb{L}-\lambda I)^{-1}D\mathbb{G}(\tilde{u})\right) u \right).
\end{align}
Let $P$ be given in \eqref{eq : pseudo diagonalization DF} and recall that $DF(U_0) = P \mathcal{D}P^{-1}$. Denoting $V \bydef P^{-1}U$, the above yields
\begin{align}
  \mathcal{D}V-\lambda V= P^{-1}(L-\lambda I) \gamma \left( \left(\mathbb{K} - (\mathbb{L}-\lambda I)^{-1}D\mathbb{G}(\tilde{u})\right) u \right).
\end{align}
Now, letting $S,R$ be defined in \eqref{eq : pseudo diagonalization DF}, we get
\begin{align}\label{eq : for S and R 1}
  SV-\lambda V= - RV +  P^{-1}(L-\lambda I) \gamma \left( \left(\mathbb{K} - (\mathbb{L}-\lambda I)^{-1}D\mathbb{G}(\tilde{u})\right) u \right).
\end{align}
Let $t \in \mathbb{C}$ be defined as in Lemma \ref{prop : minimum of the spectrum full}, we can transform \eqref{eq : for S and R 1} as 
\small{
\begin{align}\label{eq : for S and R}
  V- (S+ tI)^{-1}(\lambda+t) V= - (S+ tI)^{-1}RV +  (S+ tI)^{-1}P^{-1}(L-\lambda I) \gamma \left( \left(\mathbb{K} - (\mathbb{L}-\lambda I)^{-1}D\mathbb{G}(\tilde{u})\right) u \right).
\end{align}
} \normalsize
Now, since $\pi_N P = \pi_N$ and $\pi_N P^{-1} = \pi_N$, we obtain that 
\[
\pi_N (S+tI)^{-1} = \pi_N (L+tI)^{-1}.
\]
In particular, this implies that 
\[
\pi_N (S+ tI)^{-1}P^{-1}(L-\lambda I) = \pi_N (L+tI)^{-1}(L-\lambda I).
\]
Moreover, we have 
\[
\pi_N\left(V- (S+ tI)^{-1}(\lambda+t) V\right) = \pi_N(I - (L+tI)^{-1}(\lambda+t)) V  = \pi_N (L+tI)^{-1}(L -\lambda I)  V.
\]
Multiplying \eqref{eq : for S and R} by $\pi_N$ and using the above, we get
\begin{align}\label{eq : tail for V}
    \pi_N  V = - \pi_N (L -\lambda I)^{-1}RV +    \gamma \left( \left(\mathbb{K} - (\mathbb{L}-\lambda I)^{-1}D\mathbb{G}(\tilde{u})\right) u \right).
\end{align}
Now, \eqref{eq : tail for V} and the fact that $\|u\|_2 = 1$ imply that 
\small{
\begin{align}\label{eq : step 5 in the proof}
    \|\pi_NV\|_2 &\leq \|\pi_N (L -\lambda I)^{-1}R \pi_N\| \|\pi_NV\|_2 + \|\pi_N (L -\lambda I)^{-1}R\pi^N\|_2 \|\pi^NV\|_2 + \mathcal{Z}_{u,2}\|U\|_2 + \frac{\mathcal{C}_1r_0}{\sqrt{|\om|}}\\
    &\leq Z_{1,1}\|\pi^NV\|_2 + Z_{1,2}\|\pi_NV\|_2 + \mathcal{Z}_{u,2}\|U\|_2 + \frac{(\|\cha u\|_2^2 + \|\out u\|_2^2)^\frac{1}{2}}{\sqrt{|\om|}}\mathcal{C}_1r_0.
\end{align}
}\normalsize
Using \eqref{eq : norm of u on omega} and Parseval's identity, we have 
\begin{align*}
    \frac{(\|\cha u\|_2^2 + \|\out u\|_2^2)^\frac{1}{2}}{\sqrt{|\om|}}\mathcal{C}_1r_0 \leq  (1+\kappa_1^2)^\frac{1}{2} \mathcal{C}_1r_0 \|U\|_2.
\end{align*}
Going back to \eqref{eq : step 5 in the proof}, we get
\begin{equation}\label{eq : ineq for tail of V}
    \|\pi_NV\|_2 \leq \frac{Z_{1,1}}{1-Z_{1,2}} \|\pi^NV\|_2 + \frac{\mathcal{Z}_{u,2}\|U\|_2}{1-Z_{1,2}} + \frac{(1+\kappa_1^2)^\frac{1}{2}\mathcal{C}_1r_0 \|U\|_2}{1-Z_{1,2}}.
\end{equation}
But now, we have that $U = PV$, and therefore
\begin{align}\label{eq : estimate norm of U with V}
  \|U\|_2   \leq 
  \|P^N\|_2\|\pi^NV\|_2 + \|\pi_NV\|_2
\end{align}
where we used that $\pi_N P = \pi_N$. Combined with \eqref{eq : ineq for tail of V}, we get
\begin{align}\label{eq : last estimate tail for V}
    \|\pi_NV\|_2 \leq \frac{Z_{1,1} + (\mathcal{Z}_{u,2} + (1+\kappa_1^2)^\frac{1}{2}\mathcal{C}_1r_0)\|P^N\|_2}{1-Z_{1,2}-\mathcal{Z}_{u,2} - (1+\kappa_1^2)^\frac{1}{2}\mathcal{C}_1r_0} \|\pi^NV\|_2 = \kappa_2 \|\pi^N V\|_2.
\end{align}
This also implies that 
\small{
\begin{align}\label{eq : inequalities between u and v}
    \|U\|_2 \leq (\|P^N\|_2 + \kappa_2) \|\pi^NV\|_2 \text{ and }  \frac{(\|\cha u\|_2^2 + \|\out u\|_2^2)^\frac{1}{2}}{\sqrt{|\om|}} \leq (1+\kappa_1^2)^\frac{1}{2} (\|P^N\|_2 + \kappa_2) \|\pi^NV\|_2.
\end{align}
}\normalsize
In particular, since $\|u\|_2 =1$, we obtain that $\|\pi^NV\|_2 >0$.
Now, multiplying \eqref{eq : for S and R} by $\pi^N$, we get
\footnotesize{
\begin{align}\label{eq : finite part V}
     &(I- (S+ tI)^{-1}(\lambda+t))\pi^N V \\
     = - &\pi^N(S+ tI)^{-1}R\pi^N V - \pi^N(S+ tI)^{-1}R\pi_N V +  \pi^N (S+ tI)^{-1}P^{-1}(L-\lambda I) \gamma \left( \left(\mathbb{K} - (\mathbb{L}-\lambda I)^{-1}D\mathbb{G}(\tilde{u})\right) u \right).
\end{align}
} \normalsize
But now, notice that $(S+ tI)^{-1} : \ell^2 \to \ell^2$ is diagonal and compact. Consequently, we obtain
\begin{align*}
   \min_{\lambda_i \in \sigma(S)} \left|1 - \frac{\lambda+t}{\lambda_i+  t} \right| \|\pi^N V\|_2 \leq \| (I- (S+ tI)^{-1}(\lambda+t))\pi^N V\|_2.
\end{align*}
Going back to \eqref{eq : finite part V} and using \eqref{eq : inequalities between u and v}, we get
\begin{align*}
     &\min_{\lambda_i \in \sigma(S)} \left|1 - \frac{\lambda+t}{\lambda_i+  t} \right| \|\pi^N V\|_2 \\
     \leq & ~ Z_{1,3}\|\pi^N V\|_2 + Z_{1,4}\|\pi_NV\|_2 + \mathcal{Z}_{u,3}  \|U\|_2 + \frac{(\|\cha u\|^2_2 + \|\out u\|_2^2)^\frac{1}{2}}{\sqrt{|\om|}}\mathcal{C}_2r_0\\
     \leq & ~ \left(Z_{1,3} + Z_{1,4} \kappa_2 + \left(\mathcal{Z}_{u,3} + \mathcal{C}_2r_0(1+\kappa_1^2)^\frac{1}{2} \right)\left(\|P^N\|_2 +   \kappa_2 \right)   \right) \|\pi^NV\|_2
\end{align*}
where we also used \eqref{eq : estimate norm of U with V} and \eqref{eq : last estimate tail for V}. This concludes the proof.
\end{proof}

\begin{remark}
    Under Assumptions \ref{ass:A(1)} and \ref{ass : LinvG in L1}, Theorem 3.9 in \cite{unbounded_domain_cadiot} provides that there exists $C>0$ such that $\mathcal{Z}_{u,1}, \mathcal{Z}_{u,2} \leq  C e^{-2\pi \rho d}$, where $\rho$ is given in Assumption \ref{ass:A(1)} and $C$ only depends on $u_0$. In particular, Theorem 3.9 in \cite{unbounded_domain_cadiot} provides an explicit formula for the value of $C$. Moreover, the bounds $Z_{1,i}$ ($i \in \{1,2,3,4\}$) involve the computation of infinite matrix norms, which can be obtained thanks to a meticulous combination of rigorous numerics and analytical estimates. Such computations are recurrent in computer-assisted analysis and we refer the reader to  \cite{unbounded_domain_cadiot, sh_cadiot, period_kuramoto, periodic_navier_spontaneous} for examples. Consequently, computer-assisted analysis allows to derive an explicit value for each $\epsilon_n^{(\infty)}$ in \eqref{eq : value for epsilon infinity n}.  We illustrate such an analysis in Section \ref{sec : application proof stability}.
\end{remark}

The above result will allow to control the spectrum of $sD\mathbb{F}(\tilde{u}) + (1-s)D\mathbb{F}(u_0)$ ($s \in [0,1]$) thanks to the spectrum of $DF(U_0)$. Now, we want to be able to continuously pass from $D\mathbb{F}(u_0)$ to $DF(U_0)$. For this purpose, we define $U^{(q)}_0$ as 
\begin{align*}
    U^{(q)}_0 \bydef \gamma_q(u_0) \in X_q, 
\end{align*}
where $\gamma_q$ is defined in \eqref{eq : gamma and gamma dagger functions},
and we control the eigenvalues of $DF_q(\Uoq)$ thanks to the one of $DF_d(U_0)$. This is achieved in the next lemma.
\begin{lemma}\label{lem : link of spectrum Fourier coeff in q}
  Let $\delta>0$, $q \geq d$, $\mathcal{J}$ be a Jordan domain in $\overline{\sigma_\delta}$  and let $\lambda \in \mathcal{J}\cap \sigma\left(DF_q\left(\Uoq\right)\right)$. Now, let $\mathcal{Z}_{u,1}^{(q)}, \mathcal{Z}^{(q)}_{u,2}$  and $\mathcal{Z}^{(q)}_{u,3}$ be constants satisfying the following
  \begin{align*}
     \mathcal{Z}_{u,1}^{(q)} &\geq  \sup_{\mu \in \mathcal{J}} \left\|\mathbb{1}_{\Omega_q \setminus \om} \left((\mathbb{L}- \mu I)^{-1} - \Gamma^\dagger_q\left(  ({L}_q- \mu I)^{-1}\right)\right) D{\mathbb{G}}(u_0) \right\|_2 + \left\|\mathbb{1}_{\Omega_q \setminus \om} (\mathbb{L}- \mu I)^{-1} D{\mathbb{G}}(u_0)\right\|_2\\
     \mathcal{Z}_{u,2}^{(q)} &\geq \sup_{\mu \in \mathcal{J}} \| \cha \left(\Gamma^\dagger_q\left((L_q- \mu I )^{-1}\right) - \Gamma^\dagger_d\left((L_d- \mu I )^{-1}\right)\right)D\mathbb{G}(u_0) \|_2\\
     \mathcal{Z}_{u,3}^{(q)} &\geq \sup_{\mu \in \mathcal{J}} \|\pi^N (S+ tI)^{-1}P^{-1}(L-\mu I)\|_2 \| \cha \left(\Gamma^\dagger_q\left((L_q- \mu I )^{-1}\right) - \Gamma^\dagger_d\left((L_d- \mu I )^{-1}\right)\right)D\mathbb{G}(u_0) \|_2.
  \end{align*}
  Moreover, let $Z_{1,i}$ ($i \in \{1,2,3,4\}$) be the constants introduced in Lemma \ref{lem : link of spectrum unbounded to U0}.
  If
  \begin{equation}\label{eq : condition for Zu2 q}
      1 - Z_{1,2} - \mathcal{Z}^{(q)}_{u,2} > 0,
  \end{equation}
 then we define
\begin{equation}\label{eq : value for epsilon q n}
      \epsilon_n^{(q)} \bydef |\lambda_n + t|\left(Z_{1,3} + Z_{1,4} \frac{Z_{1,1} + \mathcal{Z}^{(q)}_{u,2}\|P^N\|_2}{1-Z_{1,2}-\mathcal{Z}_{u,2}^{(q)}} +  \mathcal{Z}_{u,3}^{(q)}\left(\|P^N\|_2 +   \frac{Z_{1,1} + \mathcal{Z}_{u,2^{(q)}}\|P^N\|_2}{1-Z_{1,2}-\mathcal{Z}_{u,2}^{(q)}} \right) \right) 
\end{equation}
for all $n \in \mathbb{Z}^m$
and obtain that $\lambda \in \overline{B_{\epsilon_j^{(q)}}(\lambda_j)}$ for some $j \in \mathbb{Z}^m$, where $\lambda_j$ is given in \eqref{def : lambda n}.
 \end{lemma}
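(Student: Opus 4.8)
The plan is to mirror, essentially verbatim but with a different "perturbation" term, the argument of Lemma \ref{lem : link of spectrum unbounded to U0}. The role previously played by the homotopy between $D\mathbb{F}(u_0)$ and $D\mathbb{F}(\tilde u)$ is now absent: here both operators live in Fourier-coefficient space and the only discrepancy comes from the change of period $q\neq d$, which is precisely what the constants $\mathcal{Z}^{(q)}_{u,1},\mathcal{Z}^{(q)}_{u,2},\mathcal{Z}^{(q)}_{u,3}$ are designed to measure. Accordingly I would start from an eigenpair: let $\lambda\in\mathcal{J}\cap\sigma(DF_q(\Uoq))$; by the analogue of Lemma \ref{eq : spectrum of DFU0 is point} (compactness of $(L_q+tI)^{-1}$) this is an eigenvalue, so pick $U^{(q)}\in X_q$ with $L_q U^{(q)} + DG_q(\Uoq)U^{(q)} = \lambda U^{(q)}$ and normalize $u^{(q)}\bydef \gamma_q^\dagger(U^{(q)})$ so that $\|u^{(q)}\|_2 = 1$. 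Since $\lambda\in\sigma_\delta$, $L_q-\lambda I$ is boundedly invertible on $X_q$, hence $U^{(q)} + (L_q-\lambda I)^{-1}DG_q(\Uoq)U^{(q)} = 0$.

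Next I would transport this identity back to $L^2$ and then re-express it through the $d$-periodic objects. Using $\Gamma^\dagger_q$ and the definition $\Uoq = \gamma_q(u_0)$, the equation becomes $u^{(q)} + \Gamma^\dagger_q\big((L_q-\lambda I)^{-1}\big)D\mathbb{G}(u_0)\,u^{(q)} = \big(\Gamma^\dagger_q((L_q-\lambda I)^{-1})D\mathbb{G}(u_0) - (\mathbb{L}-\lambda I)^{-1}D\mathbb{G}(u_0)\big)u^{(q)}$ up to the support-localization bookkeeping on $\Omega_q\setminus\om$; the right-hand side is controlled by $\mathcal{Z}^{(q)}_{u,1}$ exactly as $\out u$ was controlled by $\mathcal{Z}_{u,1}$ in the previous lemma (this gives the analogue of \eqref{eq : norm of u on omega}, bounding the mass of $u^{(q)}$ outside $\om$ in terms of its mass on $\om$ — though note that here, with no $\mathcal{C}_1 r_0$ term, the bound is cleaner and no condition like \eqref{eq : condition C1 r0} is needed). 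Then, writing $\mathbb{K}_q \bydef \Gamma^\dagger_d\big((L_d-\lambda I)^{-1}DG_d(U_0)\big)$, subtracting and adding, and applying $\gamma_d$, I get $DF_d(U_0)\tilde U - \lambda\tilde U = (L_d-\lambda I)\gamma_d\big((\mathbb{K}_q - \Gamma^\dagger_q((L_q-\lambda I)^{-1})D\mathbb{G}(u_0))u^{(q)}\big)$ with $\tilde U\bydef\gamma_d(u^{(q)})$, where the operator difference inside is exactly what $\mathcal{Z}^{(q)}_{u,2}$ bounds. From here the algebra is identical to the earlier proof: set $\tilde V = P^{-1}\tilde U$, use $\mathcal{D} = S+R$ and the splitting $\pi^N + \pi_N$, exploit $\pi_N P = \pi_N$, $\pi_N P^{-1} = \pi_N$ to get $\pi_N(S+tI)^{-1} = \pi_N(L_d+tI)^{-1}$, derive the tail estimate $\|\pi_N\tilde V\|_2 \leq \kappa_2^{(q)}\|\pi^N\tilde V\|_2$ with $\kappa_2^{(q)} = \frac{Z_{1,1}+\mathcal{Z}^{(q)}_{u,2}\|P^N\|_2}{1-Z_{1,2}-\mathcal{Z}^{(q)}_{u,2}}$ (valid thanks to hypothesis \eqref{eq : condition for Zu2 q}), and finally project with $\pi^N$ to obtain $\min_{\lambda_i\in\sigma(S)}|1 - \frac{\lambda+t}{\lambda_i+t}|\,\|\pi^N\tilde V\|_2 \leq (Z_{1,3}+Z_{1,4}\kappa_2^{(q)} + \mathcal{Z}^{(q)}_{u,3}(\|P^N\|_2+\kappa_2^{(q)}))\|\pi^N\tilde V\|_2$, using that $(S+tI)^{-1}$ is diagonal and compact. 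Since $\|u^{(q)}\|_2=1$ forces $\|\pi^N\tilde V\|_2>0$, dividing through yields $|\lambda_j+t|^{-1}|\lambda - \lambda_j|\leq$ (the bracket) for the minimizing index $j$, i.e. $\lambda\in\overline{B_{\epsilon^{(q)}_j}(\lambda_j)}$ with $\epsilon^{(q)}_n$ as in \eqref{eq : value for epsilon q n}.

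The main obstacle, as in the companion lemma, is not conceptual but lies in keeping the support-truncation bookkeeping honest: $u^{(q)}$ is supported in $\overline{\Omega_q}$, not $\overline{\om}$, so when one multiplies by $\cha$ or applies $\gamma_d$ one must carefully track the contributions on $\Omega_q\setminus\om$ — this is exactly why $\mathcal{Z}^{(q)}_{u,1}$ is defined with two summands and why the "$+\|\mathbb{1}_{\Omega_q\setminus\om}(\mathbb{L}-\mu I)^{-1}D\mathbb{G}(u_0)\|_2$" piece appears. One should also check that the analogue of \eqref{eq : eigenvalue compact}–\eqref{eq : norm of u on omega} survives without the $\mathcal{C}_1 r_0$ correction (it does, since $D\mathbb{G}(u_0)$ is exactly the operator appearing, with no $\tilde u$ involved), so that the constant multiplying $\|\pi^N\tilde V\|_2$ in the penultimate display has the clean form used in \eqref{eq : value for epsilon q n}. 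Everything else is a direct transcription of the estimates already carried out in the proof of Lemma \ref{lem : link of spectrum unbounded to U0}.
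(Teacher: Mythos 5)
Your proposal is correct and mirrors the paper's proof essentially line by line: normalize the eigenvector $u^{(q)}=\gamma_q^\dagger(U_q)$ so that $\|u^{(q)}\|_2=1$, use $\mathcal{Z}^{(q)}_{u,1}$ together with Parseval to bound the mass on $\Omega_q\setminus\om$ and hence ensure positivity of $\|\pi^N\tilde{V}\|_2$, push through $\gamma_d\gamma_q^\dagger$ to obtain an identity for $DF_d(U_0)\tilde{U}-\lambda\tilde{U}$ driven by the $q$-versus-$d$ resolvent discrepancy, and then repeat the $S,R$ splitting with $\mathcal{Z}^{(q)}_{u,2},\mathcal{Z}^{(q)}_{u,3}$ replacing $\mathcal{Z}_{u,2},\mathcal{Z}_{u,3}$ and the $\mathcal{C}_i r_0$ terms dropped, which is exactly what $\epsilon^{(q)}_n$ records. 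The only slip is cosmetic: your displayed intermediate identity has the two resolvents swapped (it should read $u^{(q)}+(\mathbb{L}-\lambda I)^{-1}D\mathbb{G}(u_0)u^{(q)}=\big((\mathbb{L}-\lambda I)^{-1}-\Gamma_q^\dagger((L_q-\lambda I)^{-1})\big)D\mathbb{G}(u_0)u^{(q)}$, since the transported equation is $u^{(q)}=-\Gamma_q^\dagger((L_q-\lambda I)^{-1})D\mathbb{G}(u_0)u^{(q)}$), though your subsequent invocation of $\mathcal{Z}^{(q)}_{u,1}$ applies the bound to the correct quantity.
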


\begin{proof}
    Let $\lambda \in \mathcal{J}\cap \sigma\left(D{F}_q(\Uoq)\right)$. Then, using the proof of Lemma  \ref{eq : spectrum of DFU0 is point}, we have that $\lambda \in \text{Eig}\left(D{F}_q(\Uoq)\right)$, and we denote $U_q \in X_q$ its associated eigenvector such that $\sqrt{|\omq|}\|U_q\|_2=1$. In particular, we have
\begin{align*}
   L_q U_q + D{G}_q(\Uoq) U_q = \lambda U_q.
\end{align*}
% Equivalently, we have 
% \begin{align}
%     \cha \mathbb{L}u +   D\mathbb{G}(u_0) u &= \lambda \cha u + \cha\left(D\mathbb{G}(u_0) - D\mathbb{G}(\tilde{u})\right)u \label{eq : eigen on om}\\
%     \out \mathbb{L}u &= \lambda \out u - D\mathbb{G}(\tilde{u})u \label{eq : eigen on R minus om}.
% \end{align}
Then, because $\lambda \in \sigma_\delta$, we know that ${L}_q - \lambda I$ is invertible and therefore
\begin{align}\label{eq : eigenvalue compact for q}
    U_q +  ({L}_q-\lambda I)^{-1} D{G}_q(\Uoq) U_q = 0. 
\end{align}
Using \eqref{eq : gamma and gamma dagger functions}, this implies that
\small{
\begin{align*}
  &\|\mathbb{1}_{\Omega_q \setminus \om}\gamma^\dagger_q(U_q)\|_2 =\left\|\mathbb{1}_{\Omega_q \setminus \om} \Gamma^\dagger_q\left(  ({L}_q-\lambda I)^{-1}\right) D{\mathbb{G}}(u_0) \gamma_q^\dagger(U_q)\right\|_2 \\
   \leq & ~\left\|\mathbb{1}_{\Omega_q \setminus \om} \left((\mathbb{L}-\lambda I)^{-1} - \Gamma^\dagger_q\left(  ({L}_q-\lambda I)^{-1}\right)\right) D{\mathbb{G}}(u_0) \gamma_q^\dagger(U_q)\right\|_2 + \left\|\mathbb{1}_{\Omega_q \setminus \om} (\mathbb{L}-\lambda I)^{-1} D{\mathbb{G}}(u_0) \gamma_q^\dagger(U_q)\right\|_2\\
    \leq & ~ \mathcal{Z}_{u,1}^{(q)}  \|\cha \gamma^\dagger_q(U_q)\|_2
\end{align*}
} \normalsize
 using that supp$(u_0) \subset \overline{\om}$. In particular, using Parseval's identity we obtain that $$\|\cha \gamma^\dagger_q(U_q)\|_2^2 = 1 -  \|\mathbb{1}_{\Omega_q \setminus \om}\gamma^\dagger_q(U_q)\|_2^2  \geq 1 - (\mathcal{Z}_{u,1}^{(q)} )^2\|\cha \gamma^\dagger_q(U_q)\|_2^2.$$ 
 This implies that 
\begin{align}\label{eq : lower bound norm u in proof for q}
    \|\cha \gamma^\dagger_q(U_q)\|_2 \geq \frac{1}{\sqrt{1+(\mathcal{Z}_{u,1}^{(q)} )^2}} >0.
\end{align}
Now, let us define $U = \gamma_d(\gamma_q^\dagger(U_q))$. Going back to \eqref{eq : eigenvalue compact for q} and using a similar reasoning as for the proof of Lemma \ref{lem : link of spectrum unbounded to U0}, we apply $\gamma_d(\gamma^\dagger_q(\cdot))$ to \eqref{eq : eigenvalue compact for q} and get
\begin{align*}
    DF_d(U_0)U - \lambda U = (L_d-\lambda)\gamma_d \left( \left(\Gamma^\dagger_d\left((L_d-\lambda)^{-1} DG_d(U_0)\right) -  \Gamma^\dagger_q\left(({L}_q-\lambda I)^{-1} D{G}_q(\Uoq)\right)\right) U \right).
\end{align*}
We finish the proof following the steps of the proof of Lemma \ref{lem : link of spectrum unbounded to U0}. 
\end{proof}
Finally, combining Lemma \ref{lem : link of spectrum unbounded to U0} and \ref{lem : link of spectrum Fourier coeff in q}, we derive our main result based on the construction of an homotopy from $\mathcal{D}$ to  $D\mathbb{F}(\tilde{u})$.
\begin{theorem}\label{th : gershgorin unbounded}
    Let $\delta>0$, $\mathcal{J}$ be a closed Jordan domain in $\overline{\sigma_\delta}$ and let $t \in \mathbb{C}$ be given as in Lemma \ref{prop : minimum of the spectrum full}. Moreover, suppose that \eqref{eq : condition C1 r0}, \eqref{eq : condition for kappa1 and C1 r0} and \eqref{eq : condition for Zu2 q} are satisfied. Then for all $n \in \mathbb{Z}^m$, define $\epsilon_n$ as 
    \begin{align*}
        \epsilon_n \bydef \max\left\{\sup_{q \in [d,\infty)} \epsilon^{(q)}_n, \epsilon^{(\infty)}_n\right\}
    \end{align*}
    where $\epsilon^{(q)}_n$ is given in \eqref{eq : value for epsilon q n} for all $q\in [d,\infty)$ and $\epsilon^{(\infty)}_n$ is given in \eqref{eq : value for epsilon infinity n}.
    Let  $k \in \mathbb{N}$ and $I \subset \mathbb{Z}^m$ such that $|I| = k$. If $\cup_{n \in I} \overline{B_{\epsilon_n}(\lambda_n)} \subset \mathcal{J}$ and $\left(\cup_{n \in I} \overline{B_{\epsilon_n}(\lambda_n)}\right) \bigcap\left( \cup_{n \in \mathbb{Z}^m \setminus I} \overline{B_{\epsilon_n}(\lambda_n)}\right) = \varnothing$, then there are exactly $k$ eigenvalues of $D\mathbb{F}(\tilde{u})$ in $\cup_{n \in I} \overline{B_{\epsilon_n}(\lambda_n)} \subset  \mathcal{J}$ counted with multiplicity.
\end{theorem}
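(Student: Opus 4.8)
The plan is to combine the two preceding lemmas into a homotopy argument that follows the spectrum continuously from $\mathcal{D}$ (equivalently $DF_d(U_0)$) all the way to $D\mathbb{F}(\tilde u)$, while keeping every eigenvalue inside the relevant Gershgorin disk at every stage. I would set up \emph{two} homotopy parameters: first the parameter $q \in [d,\infty)$ interpolating between the problem on $\Omega_d$ and the problem on $\R^m$ (via the operators $DF_q(\Uoq)$, with $q=\infty$ formally corresponding to $D\mathbb{F}(u_0)$), and second the parameter $s\in[0,1]$ interpolating between $D\mathbb{F}(u_0)$ and $D\mathbb{F}(\tilde u)$ through $sD\mathbb{F}(\tilde u) + (1-s)D\mathbb{F}(u_0)$. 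The key point is that Lemma \ref{lem : link of spectrum Fourier coeff in q} bounds the spectrum along the first homotopy by $\cup_n \overline{B_{\epsilon_n^{(q)}}(\lambda_n)}$, Lemma \ref{lem : link of spectrum unbounded to U0} bounds it along the second by $\cup_n \overline{B_{\epsilon_n^{(\infty)}}(\lambda_n)}$, and — crucially — the bound in Lemma \ref{lem : link of spectrum unbounded to U0} is stated with the factor $\mathcal{C}_i r_0$ which is linear in $r_0$, so replacing $\tilde u$ by $su_0 + (1-s)\tilde u$ (which satisfies $\|su_0+(1-s)\tilde u - u_0\|_{\mathcal H}\le (1-s)r_0\le r_0$) only shrinks the estimates. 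Hence at \emph{every} point of the combined homotopy the spectrum inside $\mathcal{J}$ lies in $\cup_{n\in\Z^m}\overline{B_{\epsilon_n}(\lambda_n)}$ with $\epsilon_n=\max\{\sup_{q\ge d}\epsilon_n^{(q)},\epsilon_n^{(\infty)}\}$.

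Next I would invoke the standard spectral-continuity / Riesz-projection argument (as in \cite{kato2013perturbation}, Chapter IV; this is exactly the setting alluded to in Section 4 there about Jordan domains). Since $\mathcal{J}\subset\overline{\sigma_\delta}$, for each operator $\mathbb{K}(s,q)$ along the homotopy the portion of $\sigma(\mathbb{K}(s,q))$ inside $\mathcal{J}$ consists only of eigenvalues of finite multiplicity (Lemmas \ref{lem : separation of the spectrum sigma 0} and \ref{eq : spectrum of DFU0 is point}, together with the fact that $DF_q(\Uoq)+tI$ and $s D\mathbb{F}(\tilde u)+(1-s)D\mathbb{F}(u_0)+tI$ are boundedly invertible for the $t$ of Proposition \ref{prop : minimum of the spectrum full}, which follows because $D\mathbb{G}$ is relatively compact w.r.t.\ $\mathbb{L}$ and the relative bound is controlled uniformly along the homotopy). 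By hypothesis the set $A\bydef\cup_{n\in I}\overline{B_{\epsilon_n}(\lambda_n)}$ is contained in $\mathcal{J}$ and is disjoint from $B\bydef\cup_{n\notin I}\overline{B_{\epsilon_n}(\lambda_n)}$; since all eigenvalues in $\mathcal{J}$ stay in $A\cup B$, no eigenvalue can cross the (positive) gap between $A$ and $B$ during the homotopy. Therefore the total algebraic multiplicity of the eigenvalues in $A$ — given by the rank of the Riesz spectral projection $\frac{1}{2\pi i}\oint_{\partial A}(\mathbb{K}(s,q)-z I)^{-1}dz$, which is integer-valued and continuous in $(s,q)$, hence constant — is the same for $\mathcal{D}$ and for $D\mathbb{F}(\tilde u)$. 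For $\mathcal{D}$ (i.e.\ $DF_d(U_0)$) Lemma \ref{lem : gershgorin matrix} identifies that multiplicity: each $\overline{B_{\epsilon_n}(\lambda_n)}$ with $n\in I$ contains the Gershgorin disk $B_n$ (since $\epsilon_n\ge r_n$ would need to be checked — actually one should note $\epsilon_n$ plays the role of an enlarged Gershgorin radius and the count $k$ comes directly from $|I|=k$ via the generalized Gershgorin theorem applied to $\mathcal{D}$), so $A$ contains exactly $k$ eigenvalues of $\mathcal{D}$ counted with multiplicity. Passing through the homotopy, $A$ contains exactly $k$ eigenvalues of $D\mathbb{F}(\tilde u)$ counted with multiplicity.

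A few technical points need care. One must verify \emph{uniform} invertibility of $\mathbb{K}(s,q)+tI$ and \emph{uniform} boundedness of the resolvents along the compactification $q\to\infty$, so that the Riesz projection is genuinely continuous up to and including the endpoint $D\mathbb{F}(u_0)$; this is where the estimates of \cite{unbounded_domain_cadiot} (Theorem 3.9, giving $\mathcal{Z}_{u,i}^{(q)}\to 0$ as $q\to\infty$ at an exponential rate, and the relative-compactness bounds) do the work, and one should spell out that $\sup_{q\in[d,\infty)}\epsilon_n^{(q)}$ is finite because $\epsilon_n^{(q)}$ is continuous in $q$ and converges as $q\to\infty$ to (a quantity dominated by) $\epsilon_n^{(\infty)}$. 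Also, the homotopy in $q$ only produces \emph{bounds} on eigenvalues of $DF_q(\Uoq)$, not a literal continuous deformation of a fixed operator; to run the Riesz-projection argument cleanly I would instead phrase the $q$-homotopy through the operators $\Gamma_q^\dagger\big((L_q-\cdot I)^{-1}DG_q(\Uoq)\big)$ acting on $L^2$, which do depend continuously on $q$ and whose spectra relate to those of $DF_q(\Uoq)$ by the spectral mapping used implicitly in Lemma \ref{lem : link of spectrum Fourier coeff in q}. \textbf{The main obstacle} is precisely this bookkeeping at the non-compact endpoint $q=\infty$: establishing that the family of resolvents is norm-continuous (not merely strongly continuous) up to $q=\infty$ on the contour $\partial A$, so that the integer rank of the Riesz projection cannot jump there. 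Everything else is a direct assembly of Lemmas \ref{lem : gershgorin matrix}, \ref{lem : link of spectrum unbounded to U0} and \ref{lem : link of spectrum Fourier coeff in q} with the definition of $\epsilon_n$.
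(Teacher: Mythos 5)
Your proposal matches the paper's proof in all essentials: the same chain of homotopies (a pseudo-diagonalization step from $PSP^{-1}$ to $DF(U_0)$, then the $q$-homotopy over $[d,\infty]$, then $s\mapsto sD\mathbb{F}(\tilde u)+(1-s)D\mathbb{F}(u_0)$), the same invocation of Lemmas \ref{lem : gershgorin matrix}, \ref{lem : link of spectrum unbounded to U0} and \ref{lem : link of spectrum Fourier coeff in q}, the same observation that the $\mathcal{C}_ir_0$ terms are linear in $r_0$ so the second homotopy only shrinks the estimates, and the same appeal to Kato Chapter IV for constancy of the spectral count.

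Two of the points you flag as unresolved are exactly where the paper invests its work, and your intuitions about them are essentially right. On the count at $\mathcal{D}$: you hesitate between checking $\epsilon_n\ge r_n$ and "applying the generalized Gershgorin theorem to $\mathcal{D}$ directly"; the paper takes the second route, running a preliminary homotopy $\mathcal{D}(s)=(1-s)PSP^{-1}+sDF(U_0)$ so that at $s=0$ the eigenvalues are exactly the $\lambda_n$'s and the count $k=|I|$ is immediate, with Lemma \ref{lem : gershgorin matrix} then propagating the count to $s=1$ without ever comparing $\epsilon_n$ to $r_n$. On the norm-continuity at $q=\infty$ (the endpoint you correctly identify as the main obstacle): the paper proves it by reducing to continuity of $q\mapsto\Gamma_q^\dagger(L_q^{-1})$, which is handled on compact $q$-intervals by uniform continuity of $q\mapsto 1/l(\pi n/q)$, and at $q=\infty$ by the exponential decay estimate on $\|\Gamma_q^\dagger(L_q^{-1})\phi-\mathbb{L}^{-1}\phi\|_2$ for $\phi\in C^\infty_0$ together with density; this delivers the resolvent norm-continuity, hence the integer-valued constancy of the Riesz projection rank, that your sketch requires. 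So what you wrote is a faithful outline of the paper's argument, with the two technical gaps you identify being precisely the places the paper fills in.
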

\begin{proof}
First, let $\mathcal{D}(s)$ be defined as $\mathcal{D}(s) \bydef (1-s)PSP^{-1} + s DF(U_0)$ for all $s \in [0,1]$. Then, using Lemma \ref{lem : gershgorin matrix}, we obtain that there are exactly $k$ eigenvalues of $\mathcal{D}(s)$ in $\cup_{n \in {I}} B_{\epsilon}(\lambda_n) \subset \mathcal{J}$ counted with multiplicity for all $s \in [0,1]$.

In the rest of the proof, we abuse notation and denote $DF_\infty = D\mathbb{F}$, as well as $U_0^{(\infty)} = u_0$.
Now, we prove that $q \to \Gamma^\dagger_q\left((DF_q(\Uoq) + tI)^{-1}\right) : L^2 \to L^2$ is continuous on $[d,\infty]$.  Let $q_1, q_2 \in [d,\infty]$, then 
\begin{align*}
    &\|\Gamma^\dagger\left((DF_{q_1}(U_0^{(q_1)}) + tI)^{-1}\right) - \Gamma^\dagger\left((DF_{q_2}(U_0^{(q_2)}) + tI)^{-1}\right)\|_2 \\
    \leq  &~\|\Gamma^\dagger_{q_1}\left((I + L_{q_1}^{-1}DG_{q_1}(U_0^{(q_1)}) + tI)^{-1}\right)\left( \Gamma^\dagger_{q_1}(L_{q_1}^{-1}) -  \Gamma^\dagger_{q_2}(L_{q_2}^{-1})\right)\|_2\\
    & ~~+ \|\left(\Gamma^\dagger_{q_1}\left((I + L_{q_1}^{-1}DG_{q_1}(U_0^{(q_1)}) + tI)^{-1}\right) - \Gamma^\dagger_{q_2}\left((I + L_{q_2}^{-1}DG_{q_2}(U_0^{(q_2)}) + tI)^{-1}\right) \right)   \Gamma^\dagger_{q_2}(L_{q_2}^{-1})\|_2.
\end{align*}
Consequently, we will prove our result if we prove that 
\[
q \to \Gamma_q^\dagger(L^{-1}_q) : L^2 \to L^2  \text{ and } q \to \Gamma_q^\dagger(L^{-1}_q DG_q(\Uoq)) : L^2 \to L^2
\]
are continuous. First, we clearly have that $DG_q(\Uoq) : X_q \to \ell^2$ is continuous since supp$(u_0) \subset \overline{\om}$ and $u_0 \in \mathcal{H}$. Consequently, it remains to prove that $q \to \Gamma_q^\dagger(L^{-1}_q) : L^2 \to L^2 $ is continuous. First, we prove that $q \to \Gamma_q^\dagger(L^{-1}_q) : L^2 \to L^2 $ is continuous on $[d,b]$ for any $b \geq d$. Indeed, let $b \in [d,\infty)$ and let $q \in [d,b]$. Then, observe that 
\begin{align*}
    (L^{-1}_q U)_n = \frac{1}{l(\frac{\pi}{q}n)} U_n
\end{align*}
by definition of $L^{-1}_q$. Since $\frac{1}{l} : \R^m \to \mathbb{C}$ is smooth, we have that $q \to \frac{1}{l(\frac{\pi}{q}n)}$ is uniformly continuous on $[d,b]$, hence we obtain the continuity of $q \to \Gamma_q^\dagger(L^{-1}_q) : L^2 \to L^2 $ on $[d,b]$. Finally, it remains to prove that 
\begin{align*}
   \lim_{q \to \infty} \|\Gamma_q^\dagger(L^{-1}_q) - \mathbb{L}^{-1}\|_2 =0. 
\end{align*}
Let $\phi \in C^\infty_0(\R^m)$, that is $\phi$ is infinitely differentiable and has a compact support on some bounded domain, say $\mathcal{K}$. Using \cite{unbounded_domain_cadiot}, one can prove that there exists $a >0$ and $C$ depending on $\mathcal{K}$ only such that 
\begin{align*}
    \|\Gamma_q^\dagger(L^{-1}_q)\phi - \mathbb{L}^{-1}\phi\|_2 \leq C e^{-aq}
\end{align*}
for all $q$ big enough so that $\mathcal{K} \subset \omq$. Consequently, given now  $\phi \in L^2$, we can use a sequence $(\phi_n)_{n \in \mathbb{N}}$ such that $\phi_n \in  C^\infty_0(\R^m)$ for all $n \in \mathbb{N}$ and such that $\|\phi - \phi_n\|_2 \to 0$ as $n \to \infty$. Using the above, this implies that 
\begin{align*}
   \lim_{q \to \infty} \|\Gamma_q^\dagger(L^{-1}_q) - \mathbb{L}^{-1}\|_2 =0. 
\end{align*}
and that $q \to \Gamma^\dagger_q\left((DF_q(\Uoq) + tI)^{-1}\right) : L^2 \to L^2$ is continuous on $[d,\infty]$. In particular, $\left(DF_q(\Uoq) + tI\right)^{-1} : \ell^2 \to \ell^2$ is a Fredholm operator for all $q \in [d,\infty]$ (see the proofs of  Lemmas \ref{lem : separation of the spectrum sigma 0} and \ref{eq : spectrum of DFU0 is point}), and therefore $\sigma(DF_q(\Uoq) + tI)^{-1}) \cap \mathcal{J}$  consists of a finite number of eigenvalues counted with multiplicity.  Now, given $q \in [d,\infty]$ and  $\lambda \in \sigma(DF_q(\Uoq)) \cap \mathcal{J}$, Lemmas \ref{lem : link of spectrum unbounded to U0} and \ref{lem : link of spectrum Fourier coeff in q} provide that $\lambda \in \overline{B_{\epsilon_j}(\mu_j)}$ for some $\mu_j \in \sigma(S)$.
Using Chapter 4.3 of  \cite{kato2013perturbation}, we obtain that there are exactly $k$ eigenvalues of $DF_q(\Uoq)$ in $\cup_{n \in {I}} \overline{B_{\epsilon_n}(\lambda_n)} \subset \mathcal{J}$ counted with multiplicity for all $q \in [d,\infty]$.

    Finally, let $\mathbb{M}(s)$ be defined as $\mathbb{M}(s) \bydef sD\mathbb{F}(\tilde{u}) + (1-s)D\mathbb{F}(u_0)$ for all $s \in [0,1]$. Then, since $D\mathbb{F}(v) : \mathcal{H} \to L^2$ is smooth for all $v \in \mathcal{H}$, we have that $\mathbb{M}$ is smooth as a function of $s$. In particular, we have that $s \to (\mathbb{M}(s) + tI)^{-1} : L^2 \to L^2$ is continuous.
     Using Lemma \ref{lem : link of spectrum unbounded to U0}, given $\lambda \in \sigma(\mathbb{M}(s)) \cap \mathcal{J}$, we have that $\lambda \in \overline{B_{\epsilon_j}(\mu_j)}$ for some $\mu_j \in \sigma(S)$. Using Chapter 4.3 of  \cite{kato2013perturbation} again, combined with the above, we obtain that there are exactly $k$ eigenvalues of $\mathbb{M}(s)$ in $\cup_{n \in {I}} \overline{B_{\epsilon_n}(\lambda_n)} \subset \mathcal{J}$ counted with multiplicity for all $s \in [0,1]$. In particular, $\mathbb{M}(1) = D\mathbb{F}(\tilde{u})$, which concludes the proof.
\end{proof}

\begin{remark}
    In the case where $DF(U_0) : \ell^2 \to \ell^2$, $D\mathbb{F}(u_0) : L^2 \to L^2$ and   $D\mathbb{F}(\tilde{u}) : L^2 \to L^2$ are self-adjoint, then we can simplify the bounds of Lemma \ref{lem : link of spectrum unbounded to U0} and use Lemma \ref{lem : gershgorin matrix}. Indeed, given $\lambda \in \sigma(D\mathbb{F}(\tilde{u}))\cap \mathcal{J}$ and using \eqref{eq : identity for U as an eigenvector}, we have 
    \begin{align*}
         U-(\lambda+t) (DF(U_0)+tI)^{-1} U= (DF(U_0)+tI)^{-1}(L-\lambda I) \gamma \left( \left(\mathbb{K} - (\mathbb{L}-\lambda I)^{-1}D\mathbb{G}(\tilde{u})\right) u \right).
    \end{align*}
    Since $(DF(U_0)+tI)^{-1} :\ell^2 \to \ell^2$ is self-adjoint and compact, we use the spectral theorem to get
    \begin{align*}
        \min_{\mu_n \in \sigma(DF(U_0))} \left|\frac{\lambda-\mu_n}{\mu_n + t}\right| \|U\|_2 \leq \widetilde{\mathcal{Z}}_{u,3}\|U\|_2 + \widetilde{\mathcal{C}}_2r_0(1+\kappa_1^2)^{\frac{1}{2}}\|U\|_2,
    \end{align*}
    where $\widetilde{\mathcal{Z}}_{u,3}$ and $\widetilde{C}_2$ satisfy
    \begin{align*}
       \widetilde{\mathcal{Z}}_{u,3} & \geq   \sup_{\mu \in \mathcal{J}} \|(DF(U_0)+tI)^{-1}(L- \mu I)\|_2 \| \cha \left(\Gamma^\dagger\left((L- \mu I )^{-1}\right) - (\mathbb{L}- \mu I)^{-1}\right)D\mathbb{G}(u_0) \|_2\\
       \widetilde{C}_{2} &\geq  \frac{1}{r_0} \sup_{\mu \in \mathcal{J}} \|(DF(U_0)+tI)^{-1}(L- \mu I)\|_2 \left\| \left(\mathbb{L} - \mu I\right)^{-1} \left(D\mathbb{G}(u_0)-D\mathbb{G}(\tilde{u})\right)\right\|_2.
    \end{align*}
    Now, using that $\mu_n \in \overline{B_{r_n}(\lambda_n)}$, where $r_n$ is given in Lemma \ref{lem : gershgorin matrix}, we obtain 
    \begin{align}
        |\lambda- \lambda_n| \leq r_n + \left(\widetilde{\mathcal{Z}}_{u,3} + \widetilde{\mathcal{C}}_2r_0(1+\kappa_1^2)^{\frac{1}{2}}\right)(r_n + |\lambda_n+t|).
    \end{align}
    Similarly, we can simplify the bounds of Lemma \ref{lem : link of spectrum Fourier coeff in q}. In fact, given $\lambda \in \sigma(DF_q(U_0^{(q)}))\cap \mathcal{J}$, we obtain
    \begin{align*}
         |\lambda- \lambda_n| \leq r_n + \widetilde{\mathcal{Z}}^{(q)}_{u,3} (r_n + |\lambda_n+t|)
    \end{align*}
    where $\widetilde{\mathcal{Z}}^{(q)}_{u,3}$ satisfies 
    \small{
    \begin{align}
       \widetilde{\mathcal{Z}}^{(q)}_{u,3} &\geq \sup_{\mu \in \mathcal{J}}  \|(DF(U_0)+tI)^{-1}(L- \mu I)\|_2 \| \| \cha \left(\Gamma^\dagger_q\left((L_q- \mu I )^{-1}\right) - \Gamma^\dagger_d\left((L_d- \mu I )^{-1}\right)\right)D\mathbb{G}(u_0) \|_2.
    \end{align}
    } \normalsize
\end{remark}

\begin{remark}
    Note that the main goal of Theorem \ref{th : gershgorin unbounded} is to locate the eigenvalues of $D\mathbb{F}(\tilde{u})$. In order to obtain sharper enclosures of the eigenvalues (and associated eigenfunctions), one can use a fixed-point argument, as presented in Section 5.3 of \cite{unbounded_domain_cadiot}, in a second time. In the self-adjoint case, one can also use the results of Section 10 from \cite{plum2019numerical}. 
\end{remark}

\begin{remark}
    In Theorem 3.17 of \cite{sh_cadiot}, we established that localized solutions can be proven, under some criteria to satisfy, to be the limit of a continuous family of spatially periodic solutions to \eqref{eq : equation intro} as the period tends to infinity.   In particular, the solutions investigated in Section \ref{sec : application proof stability} were all established together with their respective branch of periodic solutions. Using the proof of Theorem \ref{th : gershgorin unbounded} and the homotopy in the parameter $q$, we obtain that Theorem \ref{th : gershgorin unbounded} also provides a localization for the spectrum of the Jacobian at  each periodic solution of the branch. In particular, the obtained stability results in Section \ref{sec : application proof stability} for the localized solutions are applicable to the related periodic solutions as well (under a single application of Theorem \ref{th : gershgorin unbounded}). 
\end{remark}

\section{Application to the study of stability}\label{sec : application proof stability}

In this section we present applications of the framework described in the previous sections. In particular, we study the stability of localized solutions in different PDE and nonlocal models. We provide details for the computation of the bounds of Lemma \ref{lem : link of spectrum unbounded to U0} and \ref{lem : link of spectrum Fourier coeff in q} for specific applications. Then, explicit values for the bounds are obtained thanks to the use of \emph{interval arithmetic} \cite{Moore_interval_analysis}. Using the Julia packages \emph{IntervalArithmetic.jl} \cite{julia_interval} and \emph{RadiiPolynomial.jl}  \cite{julia_olivier}, we implement the rigorous computation of the bounds in \cite{julia_cadiot}. Moreover, we use the Matlab package \emph{Intlab} \cite{Rump} for the computation of matrix products and matrix norms. Indeed, Intlab is optimized for rigorous vector and matrix computations. On the other hand, the sequence structures implemented in \cite{julia_olivier} provide a natural environment for the usage of Fourier coefficients under different symmetries and tensor products.

\subsection{The planar Swift-Hohenberg PDE}\label{ssec : application SH}

We first investigate stability of stationary localized solutions, also denoted localized patterns in this context, in the planar Swift-Hohenberg PDE :
\begin{equation}\label{eq : swift original}
      u_t = -\left((I_d+\Delta)^2u +  \mu u +  \nu_1 u^2 +  \nu_2u^3\right) \bydef  \mathbb{F}(u), \quad u = u(x,t), ~~ x \in \R^2,
\end{equation}
where $\mu >0$ and $(\nu_1, \nu_2) \in \R^2$ are given parameters. In particular, given a stationary localized solution $\tilde{u} : \R^2 \to \R$ of \eqref{eq : swift original}, that is $\mathbb{F}(\tilde{u})=0$ and $\tilde{u}(x) \to 0$ as $|x|_2 \to \infty$, we are interested in the stability of $\tilde{u}$. Naturally, such a property can be studied thanks to the spectrum of $D\mathbb{F}(\tilde{u}) : \mathcal{H} \to L^2(\R^2)$. 

Using \cite{sh_cadiot} and the related data, we have access to constructive existence proofs of localized patterns associated to approximate solutions satisfying \eqref{eq : approximate solution} and \eqref{eq : defect with true solution}. In fact, the solutions established in \cite{sh_cadiot} are invariant under reflection about the $x$ and $y$ axis. That is $\tilde{u},u_0 \in \mathcal{H}_{cc}$, where 
\begin{align*}
    \mathcal{H}_{cc} \bydef \left\{u \in \mathcal{H}, ~ u(x_1,x_2) = u(-x_1,x_2) = u(x_1,-x_2) \text{ for all } x \in \R^2\right\}.
\end{align*}
In particular, $u_0$ has a cosine-cosine series representation. Moreover, note that $D\mathbb{F}(\tilde{u}) : L^2 \to L^2$ is self-adjoint, and consequently its spectrum is real-valued. Moreover, we have 
\begin{equation}
    \mathbb{L} \bydef -(I+\Delta)^2 -  \mu I \text{ and } l(\xi) = - (1-|2\pi\xi|_2^2)^2 - \mu 
\end{equation}
for all $\xi \in \R^2$. In particular, we readily have that 
\begin{equation}\label{eq : essential sh}
    \sigma_{ess}(D\mathbb{F}(\tilde{u})) = (-\infty,-\mu].
\end{equation}
Given $\delta>0$, and combining \eqref{def : sigma delta} and \eqref{eq : essential sh}, we can define $\sigma_\delta$ as
\begin{equation}
    \sigma_\delta = (\delta - \mu , \infty).
\end{equation}
Note that we only consider the restriction of $\sigma_\delta$ on $\R$ since the spectrum of $D\mathbb{F}(\tilde{u})$ is real. 
%
%
% Notice that \cite{sh_cadiot} provides all the technical analysis required for computing the sequence $(\epsilon_n)$ in Theorem \ref{th : gershgorin unbounded} (cf. Section 3.5 in \cite{sh_cadiot}). In what follows, we present a first application of Theorem \ref{th : gershgorin unbounded} to an unstable localized solution and a second application to a stable one.
%
%
In practice, we notice $\mathcal{H}$ can be decomposed as follows
\begin{align*}
\mathcal{H} = \mathcal{H}_{cc} \oplus \mathcal{H}_{sc} \oplus \mathcal{H}_{cs} \oplus \mathcal{H}_{ss},
\end{align*}
where 
\begin{align*}
    \mathcal{H}_{sc} &\bydef \left\{u \in \mathcal{H}, ~ u(x_1,x_2) = -u(-x_1,x_2) = -u(x_1,-x_2) \text{ for all } x \in \R^2\right\},\\
    \mathcal{H}_{cs} &\bydef \left\{u \in \mathcal{H}, ~ u(x_1,x_2) = u(-x_1,x_2) = -u(x_1,-x_2) \text{ for all } x \in \R^2\right\},\\
    \mathcal{H}_{ss} &\bydef \left\{u \in \mathcal{H}, ~ u(x_1,x_2) = -u(-x_1,x_2) = -u(x_1,-x_2) \text{ for all } x \in \R^2\right\}.
\end{align*}
In a similar spirit, we decompose $L^2 = L^2_{cc} \oplus L^2_{sc} \oplus L^2_{cs} \oplus L^2_{ss}$, where $L^2_{ij}$ possesses the same symmetries as $\mathcal{H}_{ij}$. Moreover, given $\tilde{u} \in \mathcal{H}_{cc}$, we have that 
\begin{align*}
    D\mathbb{F}(\tilde{u} )v_{ij} \in L^2_{ij}
\end{align*}
for all $v_{ij} \in \mathcal{H}_{ij}$ and all $i,j \in \{c,s\}$. This implies that if $u \in \mathcal{H}$ satisfies 
\begin{align*}
    D\mathbb{F}(\tilde{u} )u = \lambda u
\end{align*}
and we decompose $u = u_{cc} + u_{sc} + u_{cs} + u_{ss}$, where $u_{ij} \in \mathcal{H}_{i,j}$, then 
\[
 D\mathbb{F}(\tilde{u} )u_{ij} = \lambda u_{ij}
\]
for all $i,j \in \{c,s\}$. Consequently, we can investigate the spectrum of $D\mathbb{F}_{ij}(\tilde{u} ) : \mathcal{H}_{ij} \to L^2_{ij}$, the restriction of $D\mathbb{F}(\tilde{u})$ to $\mathcal{H}_{ij} \to L^2_{ij}$, for all $i,j \in \{c,s\}$ and obtain the following disjoint decomposition 
\begin{equation}\label{eq : identity symmetry spectrum SH}
    \sigma\left(D\mathbb{F}(\tilde{u})\right) =  \sigma\left(D\mathbb{F}_{cc}(\tilde{u})\right) \cup  \sigma\left(D\mathbb{F}_{cs}(\tilde{u})\right) \cup  \sigma\left(D\mathbb{F}_{sc}(\tilde{u})\right) \cup  \sigma\left(D\mathbb{F}_{ss}(\tilde{u})\right).
\end{equation}
Noticing that the analysis of Section \ref{sec : control spectrum DF unbounded} applies when restricted to symmetric functions, we control the spectrum of each $D\mathbb{F}_{ij}(\tilde{u})$ and use the identity \eqref{eq : identity symmetry spectrum SH} to conclude.
In practice, this allows to decrease the numerical complexity of the rigorous numerics, since the matrix sizes are approximately decreased by a factor $4$. This is notably useful in the planar Swift-Hohenberg PDE, since localized patterns are highly non-trivial and require a large amount of Fourier coefficients to obtain an accurate approximation. 

% Finally, notice that $\mathbb{F}$ possesses natural translation and rotation invariances. Indeed, the set of localized stationary solutions to \eqref{eq : swift original} is invariant under translation along $x$ and $y$, but also under rotation. Consequently, the dimension of the kernel of $D\mathbb{F}(\tilde{u})$  will be non trivial. In practice, when using Theorem \ref{th : gershgorin unbounded}, we expect disks containing $0$. Denoting $\mathcal{B}$ the union of such disks, we choose $\delta$ satisfying 
% \begin{equation*}
%      \delta <  \min_{z \in \mathcal{B}} Re(z) + \mu.   
% \end{equation*}
% In practice, we will ensure that $\min_{z \in \mathcal{B}} Re(z)$ is big enough so that $\delta>0$. Doing so, we ensure that $\sigma_\delta$ contains the disks around $0$.

Since localized patterns are solutions defined on an unbounded domain, they are subject to natural invariances, which lead to a non-empty kernel.
We describe further the kernel of $D\mathbb{F}(\tilde{u})$ and its properties. Let $SE(2)$ be the usual group of Euclidean symmetries on the plane. In particular, $SE(2)$ is generated by 2D translations and a 1D rotation. Moreover, the set of localized solutions to \eqref{eq : swift original} is invariant under $SE(2)$, which can potentially generate a 3D kernel. In fact, we have the following result.
\begin{lemma}\label{lem : radially symmetric}
  Suppose that $\mathbb{F}(\tu) =0$, that $\tu \in H^\infty(\R^2) \cap \mathcal{H}_{cc} \setminus \{0\}$ and that $\mathbb{F}$ is invariant under the Euclidean group of symmetries $SE(2)$.  Then, we have that  $\partial_{x1}\tilde{u},\partial_{x_2} \tu \neq 0$,  $\partial_{x_2} \tilde{u} \in Ker(D\mathbb{F}_{cs}(\tu))$ and $\partial_{x_1} \tilde{u} \in Ker(D\mathbb{F}_{sc}(\tu))$. Moreover, if  $Ker(D\mathbb{F}_{ss}(\tu)) = \{0\}$, then $\tu$ is radially symmetric.
\end{lemma}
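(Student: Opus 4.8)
The plan is to exploit the three one-parameter subgroups of $SE(2)$ --- the two translations $T_a u = u(\cdot - a)$ and the rotations $T_{R_\theta}$ --- to produce kernel elements of $D\mathbb{F}(\tilde u)$, and then to identify their symmetry type within the splitting $\mathcal{H}=\mathcal{H}_{cc}\oplus\mathcal{H}_{sc}\oplus\mathcal{H}_{cs}\oplus\mathcal{H}_{ss}$. I would first treat the translations: since $\mathbb{F}$ commutes with $T_a$, the curve $a\mapsto T_a\tilde u$ consists of zeros of $\mathbb{F}$, and because $\mathbb{L}\tilde u=-\mathbb{G}(\tilde u)\in H^1(\R^2)$ (Lemma 2.4 in \cite{unbounded_domain_cadiot}) we have $\mathbb{L}\partial_{x_i}\tilde u=-\partial_{x_i}\mathbb{G}(\tilde u)\in L^2$, hence $\partial_{x_i}\tilde u\in\mathcal{H}$ and the curve is differentiable into $\mathcal{H}$; differentiating $\mathbb{F}(T_a\tilde u)=0$ at $a=0$ gives $D\mathbb{F}(\tilde u)\partial_{x_i}\tilde u=0$. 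Using that $\tilde u\in\mathcal{H}_{cc}$ is even in each variable, $\partial_{x_1}\tilde u$ is odd in $x_1$ and even in $x_2$, so $\partial_{x_1}\tilde u\in\mathcal{H}_{sc}$, and since $D\mathbb{F}_{sc}(\tilde u)$ is the restriction of $D\mathbb{F}(\tilde u)$ to $\mathcal{H}_{sc}$ this yields $\partial_{x_1}\tilde u\in Ker(D\mathbb{F}_{sc}(\tilde u))$; symmetrically $\partial_{x_2}\tilde u\in Ker(D\mathbb{F}_{cs}(\tilde u))$. For non-triviality, if $\partial_{x_1}\tilde u\equiv 0$ then $\tilde u$ is independent of $x_1$, contradicting $\tilde u\in L^2(\R^2)\setminus\{0\}$; hence $\partial_{x_1}\tilde u\neq 0$, and likewise $\partial_{x_2}\tilde u\neq 0$.

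Next I would handle the rotation subgroup. Write $\Lambda\tilde u := x_1\partial_{x_2}\tilde u - x_2\partial_{x_1}\tilde u$, the image of $\tilde u$ under the infinitesimal generator of $\theta\mapsto T_{R_\theta}$. A short parity computation with $\tilde u\in\mathcal{H}_{cc}$ shows $\Lambda\tilde u$ is odd in both variables, that is $\Lambda\tilde u\in\mathcal{H}_{ss}$. Since $\mathbb{L}$ and the $L^2$-norm are rotation invariant, $T_{R_\theta}$ acts isometrically on $\mathcal{H}$, so --- provided $\Lambda\tilde u\in\mathcal{H}$ --- the curve $\theta\mapsto T_{R_\theta}\tilde u$ is differentiable into $\mathcal{H}$, and differentiating $\mathbb{F}(T_{R_\theta}\tilde u)=0$ at $\theta=0$ gives $D\mathbb{F}(\tilde u)\Lambda\tilde u=0$, i.e. $\Lambda\tilde u\in Ker(D\mathbb{F}_{ss}(\tilde u))$. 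Under the hypothesis $Ker(D\mathbb{F}_{ss}(\tilde u))=\{0\}$ this forces $\Lambda\tilde u\equiv 0$; in polar coordinates $\Lambda=\partial_\theta$, so $\partial_\theta\tilde u\equiv 0$, and since $\tilde u\in H^\infty(\R^2)\subset C^\infty$ this means $\tilde u$ is radially symmetric.

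The step I expect to be the main obstacle is justifying $\Lambda\tilde u\in\mathcal{H}$, since $\Lambda$ carries an unbounded spatial weight: equivalently one must show $\mathbb{L}\Lambda\tilde u=\Lambda\mathbb{L}\tilde u=-\Lambda\mathbb{G}(\tilde u)\in L^2$, which requires polynomial-weighted $H^1$ control on $\mathbb{G}(\tilde u)$ and hence spatial decay of $\tilde u$ and its derivatives. Here I would invoke Assumptions \ref{ass:A(1)}--\ref{ass : LinvG in L1}: from the fixed-point identity $\tilde u=-\mathbb{L}^{-1}\mathbb{G}(\tilde u)$ and the exponential decay of the kernels $\mathcal{F}^{-1}(1/l)$ and $\mathcal{F}^{-1}(g_{j,k}^p/l)$ (cf. the remark after Assumption \ref{ass : LinvG in L1}), a standard convolution bootstrap shows that $\tilde u$ and all its derivatives decay exponentially, so $\Lambda\mathbb{G}(\tilde u)\in L^2$ and $\Lambda\tilde u\in\mathcal{H}$; the same decay also makes the curves $a\mapsto T_a\tilde u$ and $\theta\mapsto T_{R_\theta}\tilde u$ genuinely $C^1$ into $\mathcal{H}$, which closes the argument.
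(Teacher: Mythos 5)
Your proof is correct and follows essentially the same route as the paper: differentiate along the translation and rotation subgroups of $SE(2)$ to obtain $\partial_{x_1}\tilde{u}$, $\partial_{x_2}\tilde{u}$ and $\partial_\theta\tilde{u}=\Lambda\tilde{u}$ as kernel elements, identify their parity subspaces $\mathcal{H}_{sc}$, $\mathcal{H}_{cs}$, $\mathcal{H}_{ss}$, and conclude radial symmetry from $\partial_\theta\tilde{u}=0$. The only cosmetic differences are that the paper determines the $ss$-parity of $\partial_\theta\tilde{u}$ via the polar Fourier expansion $\tilde{u}(r,\theta)=\sum_{n\geq 0}\tilde{u}_n(r)\cos(2n\theta)$ instead of your Cartesian computation on $x_1\partial_{x_2}\tilde{u}-x_2\partial_{x_1}\tilde{u}$, and the paper treats $\partial_\theta\tilde{u}\in\mathcal{H}$ as immediate (``readily'') whereas you correctly flag that this requires spatial decay of $\tilde{u}$ and its derivatives, a point the framework of \cite{unbounded_domain_cadiot} does supply but the paper's proof leaves implicit.
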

\begin{proof}
Since $\mathbb{F}$ is invariant under translations and rotations on the plane, we readily have that $\partial_{x_1}\tilde{u},\partial_{x_2} \tu , \partial_\theta \tu \in D\mathbb{F}(\tu)$ where $\partial_\theta$ is the angular derivative in polar coordinates. If $\partial_{x_1}\tilde{u} =0$, we would have  $\tilde{u}(\cdot,x_2)$  constant for all $x_2$. Since $\tilde{u} \in H^\infty(\R^2)$, this would imply that $\tilde{u}=0$, which is not true by assumption. The same reasoning applies to $\partial_{x_2}\tilde{u}$ and we get that    $\partial_{x1}\tilde{u},\partial_{x_2} \tu \neq 0$,  $\partial_{x_2} \tilde{u} \in Ker(D\mathbb{F}_{cs}(\tu))$ and $\partial_{x_1} \tilde{u} \in Ker(D\mathbb{F}_{sc}(\tu))$ since $\tilde{u} \in \mathcal{H}_{cc}$. 

Now, $\tilde{u}$  has a polar expansion given as 
       $\tilde{u}(r,\theta) = \sum_{n=0}^\infty \tilde{u}_n(r)\cos(2n\theta).$
    In particular, we obtain that $
       \partial_\theta \tilde{u}(r,\theta) = - \sum_{n=0}^\infty 2n \tilde{u}_n(r)\sin(2n\theta).$
 Consequently, this implies that $\partial_\theta \tilde{u} \in Ker(D\mathbb{F}_{ss}(\tilde{u}))$. In particular, if $Ker(D\mathbb{F}_{ss}(\tu)) = \{0\}$ then $\partial_\theta \tilde{u} =0$ and we obtain that $\tilde{u}$ is radially symmetric. 
\end{proof}
The above result will be useful for determining the dimension of the kernel of $D\mathbb{F}(\tilde{u})$, as well as radial symmetry (cf. Theorem \ref{th : unstable gray scott}).
Note that the smoothness condition $\tilde{u} \in H^\infty(\R^2)$ is ensured by Proposition 2.5 in \cite{unbounded_domain_cadiot}.

\subsubsection{The unstable square pattern}\label{ssec : unstable square pattern}

In this section, we investigate the stability of the localized square pattern $\tilde{u}$, whose existence is established in Theorem 4.1 of \cite{sh_cadiot}. We recall that $\mathbb{F}(\tilde{u}) = 0$ with $\mu = 0.28$, $\nu_1 = -1.6$ and $\nu_2 = 1$. Moreover, we denote $u_0 \in \mathcal{H}_{cc}$ the associated approximate solution (represented in Figure \ref{fig : square pattern}) satisfying \eqref{eq : approximate solution} and \eqref{eq : defect with true solution} with $r_0 = 1.16 \times 10^{-5}$ in that case (cf. Theorem 4.1 in \cite{sh_cadiot}).
\begin{figure}[H]
\centering
 \begin{minipage}{.55\linewidth}
  \centering\epsfig{figure=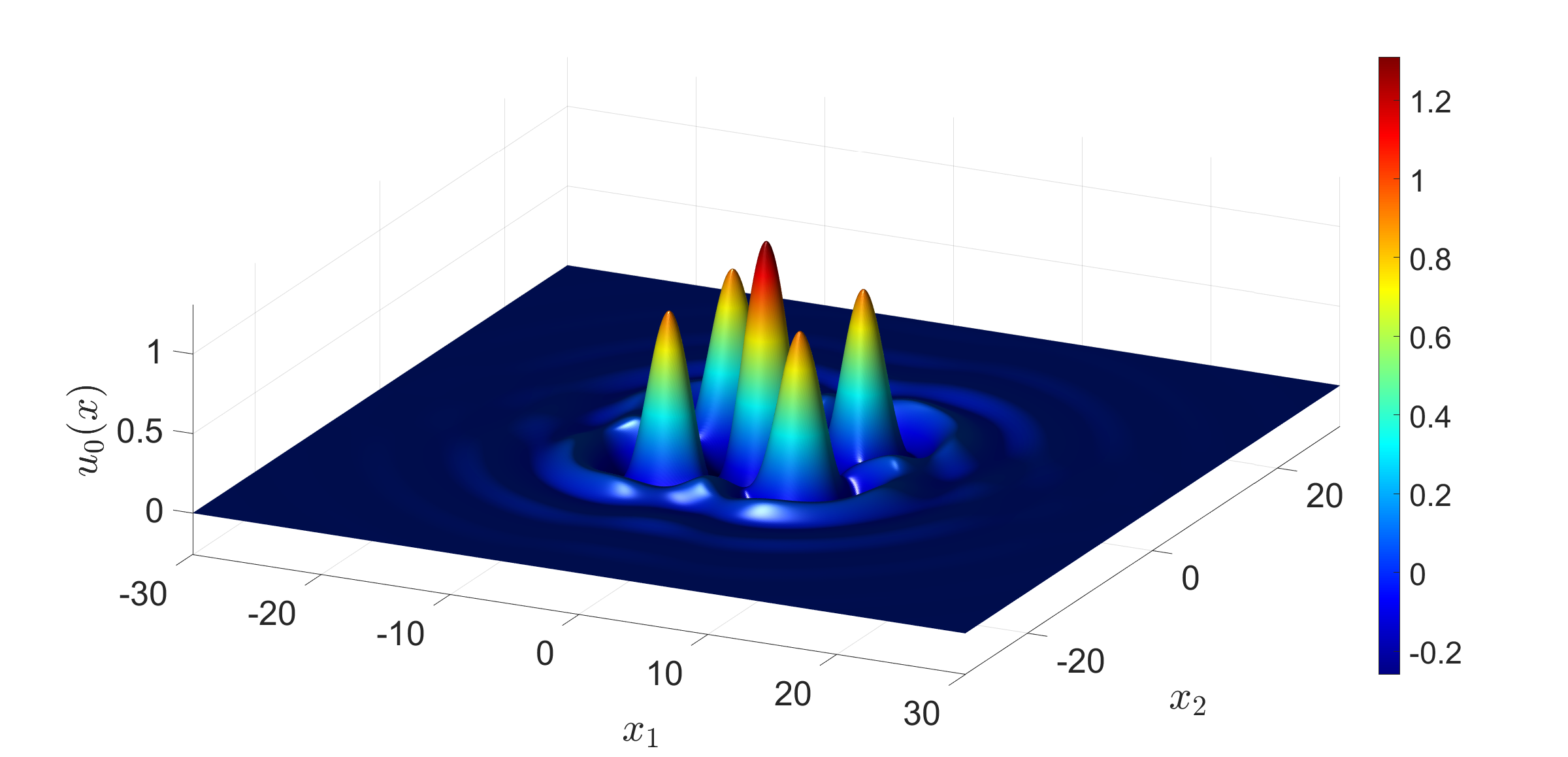,width=\linewidth}
 \end{minipage}%
 \begin{minipage}{.55\linewidth}
  \centering\epsfig{figure=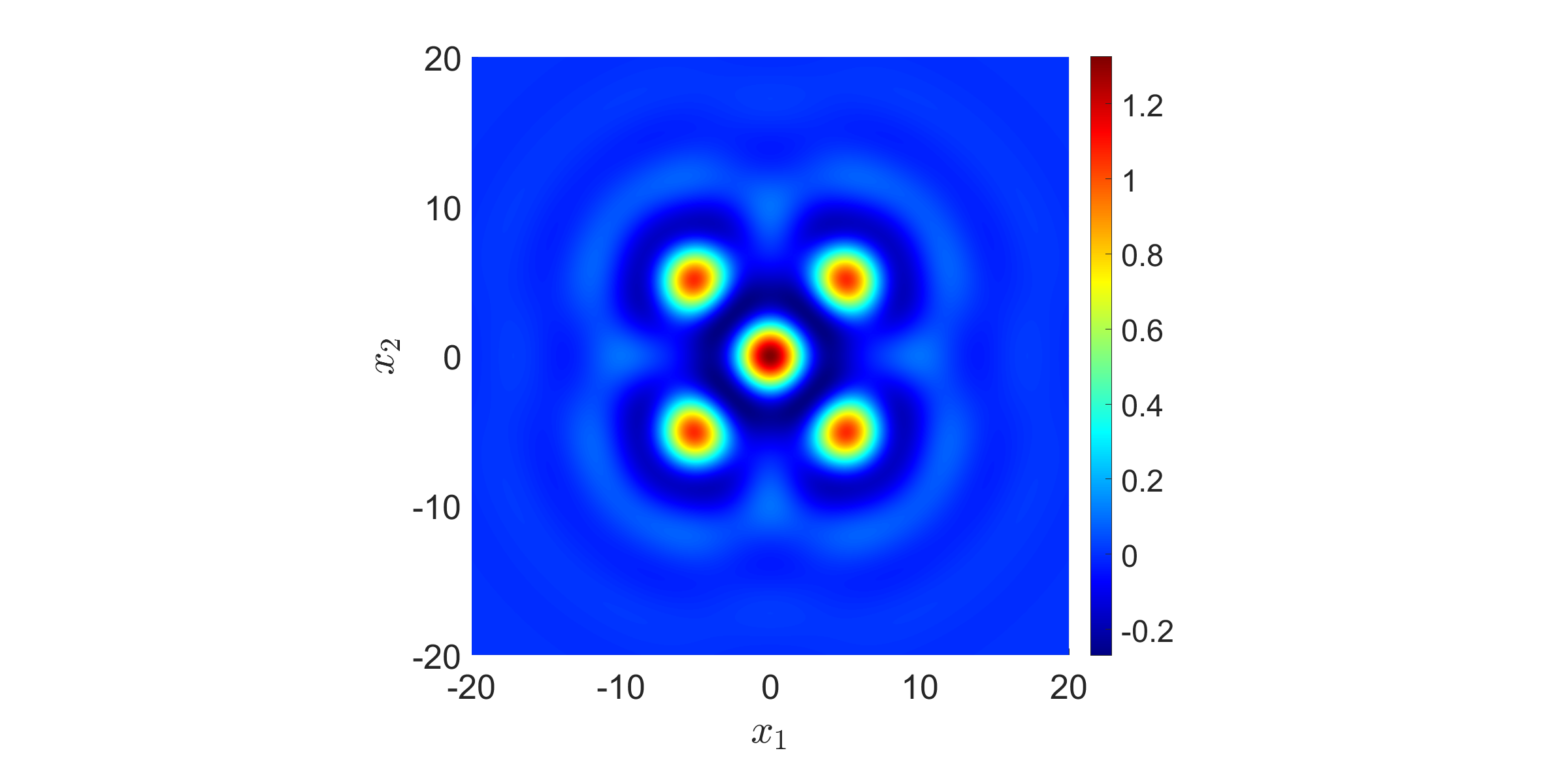,width=\linewidth}
 \end{minipage} 
 \caption{Side and upper views of the approximate solution $u_0$ of the square pattern established in Theorem 4.1 in \cite{sh_cadiot}}\label{fig : square pattern}
 \end{figure}
In what follows, we demonstrate that the framework established in the previous section not only allows to conclude about unstability, but also provides the number of unstable directions with their respective positive eigenvalues. This provides a deeper understanding of the local dynamics surrounding the localized pattern.
\begin{theorem}\label{th : unstable square solution}
    Let $\tilde{u}$ be the stationary localized solution given in  Theorem 4.1 in \cite{sh_cadiot}. Then, $\tilde{u}$ is nonlinearly unstable and possesses exactly $3$ unstable directions,  including one unstable direction in $\mathcal{H}_{cc}$, one in $\mathcal{H}_{cs}$ and one in $\mathcal{H}_{sc}$. 
\end{theorem}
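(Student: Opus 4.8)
The plan is to apply Theorem \ref{th : gershgorin unbounded} separately in each of the four symmetry subspaces $\mathcal{H}_{cc}, \mathcal{H}_{cs}, \mathcal{H}_{sc}, \mathcal{H}_{ss}$, using the block-decomposition \eqref{eq : identity symmetry spectrum SH} of $\sigma(D\mathbb{F}(\tilde{u}))$, and then read off the total count of eigenvalues with positive real part (equivalently, since $D\mathbb{F}(\tilde{u})$ is self-adjoint, positive eigenvalues). First I would fix $\delta > 0$ small enough that the rightmost part of the essential spectrum $(-\infty,-\mu]$ is excluded, so that every nonnegative eigenvalue lies in $\overline{\sigma_\delta} = [\delta - \mu,\infty)$; then choose a closed Jordan domain (here a real interval, since the spectrum is real) $\mathcal{J}_{ij} \subset \overline{\sigma_\delta}$ in each symmetry class, containing all the nonnegative approximate eigenvalues $\lambda_n$ of $DF_d(U_0)$ computed numerically in that class. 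For each block I would carry out the rigorous computation of the Gershgorin radii $r_n$ (Lemma \ref{lem : gershgorin matrix} and the explicit formulas following it) together with the enlarged radii $\epsilon_n$ of Theorem \ref{th : gershgorin unbounded}, verifying the hypotheses \eqref{eq : condition C1 r0}, \eqref{eq : condition for kappa1 and C1 r0}, \eqref{eq : condition for Zu2 q} with the constants $\mathcal{Z}_{u,i}$, $\mathcal{C}_i$, $Z_{1,i}$ estimated via interval arithmetic and the bounds from \cite{unbounded_domain_cadiot, sh_cadiot}.

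The key numerical content is then the following: in $\mathcal{H}_{cc}$ I expect exactly one disk $\overline{B_{\epsilon_n}(\lambda_n)}$ to lie strictly in $(0,\infty)$ and be disjoint from all other disks (giving one positive $D\mathbb{F}_{cc}(\tilde{u})$-eigenvalue), one disk around $0$ or slightly negative, and the remaining disks strictly in $(-\infty,0)$; similarly one isolated positive disk in each of $\mathcal{H}_{cs}$ and $\mathcal{H}_{sc}$, and all disks strictly negative in $\mathcal{H}_{ss}$. Invoking Theorem \ref{th : gershgorin unbounded} in each block with $I$ chosen to be the index set of the isolated positive disk(s) then yields: exactly one positive eigenvalue of $D\mathbb{F}_{cc}(\tilde{u})$, exactly one of $D\mathbb{F}_{cs}(\tilde{u})$, exactly one of $D\mathbb{F}_{sc}(\tilde{u})$, and none of $D\mathbb{F}_{ss}(\tilde{u})$. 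Summing over the four blocks via \eqref{eq : identity symmetry spectrum SH} gives exactly $3$ eigenvalues of $D\mathbb{F}(\tilde{u})$ with positive real part, counted with multiplicity — hence exactly $3$ unstable directions, one in each of $\mathcal{H}_{cc}$, $\mathcal{H}_{cs}$, $\mathcal{H}_{sc}$. The eigenvalues in $\mathcal{H}_{cs}$ and $\mathcal{H}_{sc}$ corresponding to the translation modes $\partial_{x_2}\tilde{u}$, $\partial_{x_1}\tilde{u}$ lie at $0$ (by Lemma \ref{lem : radially symmetric}) and must be separated from the positive disks; I would also need to locate the zero eigenvalue(s) carefully to be sure the positive disks do not absorb them.

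Finally, to pass from ``3 unstable directions'' (spectral instability) to \emph{nonlinear} instability, I would invoke the standard principle of linearized instability for the semilinear parabolic equation \eqref{eq : swift original}: since $\mathbb{L} = -(I+\Delta)^2 - \mu I$ generates an analytic semigroup, $\mathbb{G}$ is smooth with $\mathbb{G}(0)=0$, $D\mathbb{G}(0)=0$, and $D\mathbb{F}(\tilde{u})$ has spectrum with strictly positive part isolated from the rest (a genuine spectral gap, which the $\epsilon_n$-disks provide), the classical result (e.g. Henry's theorem on instability of equilibria) gives Lyapunov instability of $\tilde{u}$ as a stationary solution. The main obstacle I anticipate is purely computational: the localized square pattern requires a large number of Fourier coefficients for an accurate $U_0$, so assembling the interval-arithmetic bounds for the $Z_{1,i}$ (infinite matrix norms) and the homotopy constants $\mathcal{Z}_{u,i}^{(q)}$ uniformly in $q \in [d,\infty)$ and in $\mu \in \mathcal{J}$, while keeping the $\epsilon_n$-disks tight enough to remain pairwise disjoint and to separate the three positive eigenvalues and the zero eigenvalue(s), is delicate — the symmetry decomposition is what makes the matrix sizes manageable, and getting the disks around the translational kernel modes to stay strictly in the negative-or-zero region without overlapping the unstable disks is the crux.
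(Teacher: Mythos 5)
Your overall strategy — decompose $\sigma(D\mathbb{F}(\tilde{u}))$ via \eqref{eq : identity symmetry spectrum SH}, apply Theorem~\ref{th : gershgorin unbounded} block by block, sum the counts of positive eigenvalues, and finish with Henry's linearized-instability principle — is exactly the paper's route, and your account of the $\mathcal{H}_{cc}$, $\mathcal{H}_{cs}$, $\mathcal{H}_{sc}$ blocks matches what the paper finds. However, there is a genuine error in your treatment of the $\mathcal{H}_{ss}$ block, and a related gap in how you pin the near-zero eigenvalues to exactly zero.

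You write that you expect all Gershgorin disks in $\mathcal{H}_{ss}$ to lie strictly in $(-\infty,0)$. This cannot happen for a square pattern. By Lemma~\ref{lem : radially symmetric}, $\mathrm{Ker}(D\mathbb{F}_{ss}(\tilde{u})) = \{0\}$ would force $\tilde{u}$ to be radially symmetric, and a square pattern is not. The rotation mode $\partial_\theta\tilde{u}$ is a nonzero element of $\mathrm{Ker}(D\mathbb{F}_{ss}(\tilde{u}))$, so the $\mathcal{H}_{ss}$ block necessarily contains the eigenvalue $0$. Indeed the paper's computation produces an interval $[-0.0036,0.0036]$ for one $\mathcal{H}_{ss}$ eigenvalue, not a strictly negative enclosure. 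Concretely, your count ``$3$ positive and $2$ zero (translations)'' is off; the actual picture is $3$ positive, $3$ zero (two translations plus one rotation), and the rest negative. Relatedly, you say the translation eigenvalues ``lie at $0$ by Lemma~\ref{lem : radially symmetric}'' and must be separated from the positive disks, but you never explain how to upgrade the Gershgorin intervals $[-0.005,0.005]$ to the exact value $0$. The paper's mechanism is the one you should adopt: rigorously verify that $\tilde{u}$ is not radially symmetric (e.g.\ show $|\tilde{u}(x)-\tilde{u}(y)|>0$ for some $x,y$ with $|x|_2=|y|_2$), deduce via Lemma~\ref{lem : radially symmetric} that $\partial_{x_1}\tilde{u},\partial_{x_2}\tilde{u},\partial_\theta\tilde{u}$ are three nonzero, symmetry-separated kernel elements, and since each of the three near-zero Gershgorin disks is disjoint from the rest of the spectrum and can hold at most one eigenvalue counted with multiplicity, each of $\tilde\nu_2,\tilde\nu_3,\tilde\nu_4$ is forced to equal $0$. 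Without the rotation mode your dimension count of the kernel is wrong and the Henry-type nonlinear instability argument would be applied to an incorrectly decomposed spectrum.
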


\begin{proof}
    We want to prove that $D\mathbb{F}(\tilde{u})$ has a kernel of dimension $3$, that it has $3$ positive eigenvalues $\nu_1,\nu_2,\nu_3$ and that the rest of its spectrum is negative. First, we fix $\delta_0 \bydef 10^{-2}$,  $\delta \bydef \mu - \delta_0$ and get that $\sigma_\delta = (-\delta_0,\infty)$. Now, we determine a value of $t$ as given in Proposition \ref{prop : minimum of the spectrum full}. Suppose that $u \in L^2$ satisfies 
    \[
    \mathbb{L}u + D\mathbb{G}(\tilde{u}) u = \lambda u.
    \]
    Then, we have 
    \begin{align}\label{eq : min of spectrum}
       \min_{\xi \in \R^2} |l(\xi)-\lambda| \|u\|_2 \leq  \|\mathbb{L}u-\lambda u\|_2 = \|D\mathbb{G}(\tilde{u})u\|_2.
    \end{align}
    Now, notice that 
    \begin{align}\label{eq : estimation diff DG}
        \|D\mathbb{G}(\tilde{u})u-D\mathbb{G}(u_0)u\|_2 &\leq 2|\nu_1| \|\tilde{u}-u_0\|_\infty \|u\|_2 + 3 \nu_2\|\tilde{u}^2-u_0^2\|_\infty \|u\|_2\\
       & \leq 2|\nu_1| \kappa \|\tilde{u}-u_0\|_\mathcal{H} \|u\|_2 + 3 |\nu_2| \kappa \|u_0-\tilde{u}\|_\mathcal{H}\|u_0 + \tilde{u}\|_\infty\|u\|_2
    \end{align}
where $\kappa = \frac{1}{\mu}\|\frac{1}{l}\|_2$ satisfies $\|u\|_\infty \leq \kappa \|u\|_\mathcal{H}$ for all $u \in \mathcal{H}$ and is given in \cite{sh_cadiot}.  Then, we use \eqref{eq : defect with true solution} to obtain that $\|\tilde{u}-u_0\|_\mathcal{H} \leq r_0$. Moreover
\[
\|u_0 + \tilde{u}\|_\infty \leq 2 \|u_0\|_\infty + \|u_0-\tilde{u}\|_\infty \leq 2\|U_0\|_1 + \kappa r_0.
\]
This implies that 
\begin{align*}
    \|D\mathbb{G}(\tilde{u})u\|_2 &\leq \|D\mathbb{G}(u_0)u\|_2 +  2|\nu_1| \kappa r_0\|u\|_2 + 3 |\nu_2|\kappa r_0 (2\|U_0\|_1 + \kappa r_0)\|u\|_2\\
    &\leq \|v_0\|_\infty \|u\|_2 + 2|\nu_1| \kappa r_0\|u\|_2 + 3 |\nu_2|\kappa r_0 (2\|U_0\|_1 + \kappa r_0)\|u\|_2\\
    &\leq \|V_0\|_1 \|u\|_2 + 2|\nu_1| \kappa r_0\|u\|_2 + 3 |\nu_2|\kappa r_0 (2\|U_0\|_1 + \kappa r_0)\|u\|_2
\end{align*}
  where $V_0 \bydef 2\nu_1 U_0 + 3\nu_2 U_0*U_0$ and $v_0 =  2\nu_1 u_0 + 3\nu_2 u_0^2$.
Going back to \eqref{eq : min of spectrum}, we get
\begin{align*}
     \min_{\xi \in \R^2} |l(\xi)-\lambda| \leq \|V_0\|_1 +  2|\nu_1| \kappa r_0 + 3 |\nu_2|\kappa r_0 (2\|U_0\|_1 + \kappa r_0)
\end{align*}
Let $\lambda_{\max}$ be defined as 
\begin{equation}
    \lambda_{\max} \bydef  \|V_0\|_1 +  2|\nu_1| \kappa r_0 + 3 |\nu_2|\kappa r_0 (2\|U_0\|_1 + \kappa r_0) - \mu,
\end{equation}
then the above implies that if $t < -\lambda_{\max}$, then $D\mathbb{F}(\tilde{u}) +tI$ is invertible. Similarly, one can easily prove that for the same choice of $t$ we also have that $DF(U_0) +t I$ is invertible. Consequently, we fix some $t < -\lambda_{\max}$. In particular, the above reasoning provides that 
    \[
    \text{Eig}(D\mathbb{F}(\tilde{u}))\cap \sigma_\delta \subset (-\delta_0, \lambda_{max}].
    \]
    Consequently, we choose our Jordan domain $\mathcal{J}$ (which is simply a closed interval in this case) as 
    \[
    \mathcal{J} \bydef [-\delta_0,\lambda_{\max}].
    \]

    Now we provide details on the computations of the bounds of Lemma \ref{lem : link of spectrum unbounded to U0} and Lemma \ref{lem : link of spectrum Fourier coeff in q}. For this purpose, we heavily rely on \cite{unbounded_domain_cadiot} and \cite{sh_cadiot}.  Given $\lambda \in \mathcal{J}$, Section 3.5 in \cite{sh_cadiot} provides explicit formulas for $\mathcal{Z}_{u,1}$ and $\mathcal{Z}_{u,2}$ if we can compute $a_\lambda$ and $C_\lambda$ such that
    \begin{align}\label{eq : inequality for Clambda}
        |f_\lambda(x)| \leq C_\lambda e^{-a_\lambda |x|_1}
    \end{align}
    for all $x \in \R^2$ where $f_\lambda \bydef \mathcal{F}^{-1}\left(\frac{1}{l-\lambda}\right)$. Using equation (40) in \cite{sh_cadiot}, we have 
    \begin{align*}
        f_\lambda(x) = \frac{1}{2i\sqrt{\mu+\lambda}}\left(K_0(b_\lambda|x|_2) - K_0(\overline{b_\lambda}|x|_2)\right)
    \end{align*}
    where $b_\lambda \bydef \sqrt{2}a_\lambda - i \frac{\sqrt{\mu+\lambda}}{2\sqrt{2}a_\lambda}$ and $a_\lambda \bydef \frac{\sqrt{-1+\sqrt{1+\mu+\lambda}}}{2}$. In particular, using Section 3.5.1 in \cite{sh_cadiot}, we prove that, given $a_\lambda = \frac{\sqrt{-1+\sqrt{1+\mu+\lambda}}}{2}$,  $C_\lambda = \frac{1.335}{\sqrt{\mu+\lambda}}$ satisfies \eqref{eq : inequality for Clambda} for all $\lambda \in \mathcal{J}$. This allows us to compute $\mathcal{Z}_{u,1}$ and $\mathcal{Z}_{u,2}$ thanks to the use of rigorous numerics (cf. \cite{julia_cadiot} for the numerical details). Moreover, Section 3.5 from \cite{sh_cadiot} also provides an explicit formula for the supremum over $q \in [d,\infty)$ of  $\mathcal{Z}^{(q)}_{u,i}$ $i \in \{1,2\}$. In practice,  we  choose $\mathcal{Z}^{(q)}_{u,i} = 2 \mathcal{Z}_{u,i}$, where $\mathcal{Z}_{u,i}$ is computed thanks to Lemma 3.12 in \cite{sh_cadiot}.

    Concerning the bound $\mathcal{Z}_{u,3}$, we need to control the quantity $\|\pi^N (S+ tI)^{-1}P^{-1}(L-\lambda I)\|_2 $ for all $\lambda \in \mathcal{J}$. Since $(S+tI)^{-1}$ is diagonal, we have $\pi^N(S+tI)^{-1} = \pi^{N}(S+tI)^{-1}\pi^{N}$. Moreover, by construction of $P^{-1}$, we have $\pi^{N}P^{-1} = \pi^{N}(P^N)^{-1}\pi^{N}$. Finally, since $L-\lambda I$ is also diagonal, we get
    \begin{align*}
        \|\pi^N (S+ tI)^{-1}P^{-1}(L-\lambda I)\|_2 = \|\pi^N (S+ tI)^{-1}(P^{N})^{-1}(L-\lambda I)\pi^{N}\|_2,
    \end{align*}
    which is essentially a matrix norm. Now, we have the following
    \footnotesize{
    \begin{align}\label{eq : estimation of Zu3 matrix}
         \|\pi^N (S+ tI)^{-1}(P^{N})^{-1}(L-\lambda I)\pi^{N}\|_2 \leq \|\pi^N (S+ tI)^{-1}(P^{N})^{-1}(L + \delta_0 I)\pi^{N}\|_2 \|(L+\delta_0 I)^{-1}(L-\lambda I)\|_2,
    \end{align}
    }\normalsize
    where the right hand side is explicitly computable for every $\lambda$ thanks to rigorous numerics. Combined with Lemma 3.12 in \cite{sh_cadiot}, \eqref{eq : estimation of Zu3 matrix} allows to compute $\mathcal{Z}_{u,3}$ and we use that $\mathcal{Z}_{u,3}^{(q)} \leq 2 \mathcal{Z}_{u,3}$ for all $q \in [d,\infty)$ in this case.   

    We now focus on the bounds  $Z_{1,i}$ $(i \in \{1,2,3,4\})$. First, note that $Z_{1,3}$ involves a matrix norm, which is handled via rigorous numerics. For the bound $Z_{1,4}$, we use the properties of the adjoint to get
    \begin{align*}
        \|\pi^{N}(S+tI)^{-1}R\pi_N\|_2^2 = \|\pi^{N}(S+tI)^{-1}R\pi_NR^*(S^*+tI)^{-1}\pi^N\|_2.
    \end{align*}
    Now, notice that $\pi^NR\pi_N = (P^N)^{-1}DG(U_0)\pi_N$ by construction in \eqref{eq : pseudo diagonalization DF}. This implies that 
     \begin{align*}
        \|\pi^{N}(S+tI)^{-1}R\pi_N\|_2^2 = \|\pi^{N}(S+tI)^{-1}(P^N)^{-1}DG(U_0)\pi_N DG(U_0)^*((P^N)^*)^{-1}(S^*+tI)^{-1}\pi^N\|_2.
    \end{align*}
    Again, the right hand side is a matrix norm, which is tackled on the computer.  Concerning $Z_{1,1}$ and $Z_{1,2}$, we have 
    \begin{align*}
          \sup_{\lambda \in \mathcal{J}}\|\pi_N (L -\lambda I)^{-1}R \pi^N\|_2 &\leq \|\pi_N (L +\delta_0 I)^{-1}R \pi^N\|_2  \\
          \sup_{\lambda \in \mathcal{J}}\|\pi_N (L -\lambda I)^{-1}R\pi_N\|_2 &\leq \|\pi_N (L +\delta_0 I)^{-1}R\pi_N\|_2,
    \end{align*}
    since $\|(L-\lambda I)^{-1}(L+\delta_0 I)\|_2 \leq 1$ for all $\lambda \in \mathcal{J}$. The computation of the right hand side can now be computed using some standard computer-assisted analysis, as presented in Section \ref{sec : control spectrum DF fourier} or as in Section 4.3 in \cite{unbounded_domain_cadiot}, and is exposed in \cite{julia_cadiot}. 
    
    Finally, for the computation of $\mathcal{C}_1$ and $\mathcal{C}_2$, it remains to estimate $\left\| \left(\mathbb{L} - \lambda I\right)^{-1} \left(D\mathbb{G}(u_0)-D\mathbb{G}(\tilde{u})\right)\right\|_2$ for all $\lambda \in \mathcal{J}$. In particular, since $l(\xi) \leq -\mu$ for all $\xi \in \R^2$, we have 
    \begin{align*}
         \left\| \left(\mathbb{L} - \lambda I\right)^{-1} \left(D\mathbb{G}(u_0)-D\mathbb{G}(\tilde{u})\right)\right\|_2 \leq \frac{1}{\mu + \lambda} \|\left(D\mathbb{G}(u_0)-D\mathbb{G}(\tilde{u})\right)\|_2
    \end{align*}
    for all $\lambda \in \mathcal{J}$. Moreover, we compute $\|D\mathbb{G}(u_0)-D\mathbb{G}(\tilde{u})\|_2$ using \eqref{eq : estimation diff DG}.

    Now that we presented the computation of the bounds of Lemma \ref{lem : link of spectrum unbounded to U0} and \ref{lem : link of spectrum Fourier coeff in q}, we implement the formulas in \cite{julia_cadiot} using the arithmetic on intervals. In particular, we use the derived bounds for each restriction $D\mathbb{F}_{ij}(\tilde{u}) : \mathcal{H}_{ij} \to L^2_{ij}$  separately, where $i,j \in \{c,s\}$, and we apply Theorem \ref{th : gershgorin unbounded}. 

    We apply a first time Theorem \ref{th : gershgorin unbounded} in \cite{julia_cadiot} and obtain Gershgorin disks which are not disjoint. This implies that we cannot precisely locate each eigenvalue. However, considering the union of all disks, we obtain that the eigenvalues must be smaller than $0.08$. This allows us to choose a smaller value for $t$ and we choose $t = -0.11$. 

    We apply Theorem \ref{th : gershgorin unbounded} a second time (with $t= -0.11$) and we get the following.
    For $i = j= c$, we obtain that $D\mathbb{F}_{cc}(\tilde{u})$ possesses a positive eigenvalue $\nu_1 \in [0.047,0.054]$ and that the rest of its spectrum is negative. For $i = s$ and $j = c$, we obtain that $D\mathbb{F}_{sc}(\tilde{u})$ possesses a positive eigenvalue $\nu_2 \in [0.033,0.041]$, an eigenvalue $\tilde{\nu}_2 \in [-0.005,0.005]$ and the rest of its spectrum is negative. For $i=c$ and $j=s$, we obtain that $D\mathbb{F}_{cs}(\tilde{u})$ possesses a positive eigenvalue $\nu_3 \in [0.033,0.041]$, an eigenvalue $\tilde{\nu}_3 \in [-0.005,0.005]$ and the rest of its spectrum is negative. Finally, $D\mathbb{F}_{ss}(\tilde{u})$ has an eigenvalue  $\tilde{\nu}_4 \in [-0.0036 ,0.0036]$ and the rest of its spectrum is negative.

     Using the above, we have found that three eigenvalues $\tilde{\nu}_2,\tilde{\nu}_3,\tilde{\nu}_4$ are contained in intervals containing zero. Moreover, these intervals are disjoint from the rest of the spectrum. Then, we verify rigorously that $\tilde{u}$ is not radially symmetric using rigorous numerics, by proving that $|\tilde{u}(x) - \tilde{u}(y)| >0$ for some $x,y \in \R^2$ such that $|x|_2 = |y|_2$. Using Lemma \ref{lem : radially symmetric}, this implies that the kernel of $D\mathbb{F}(\tu)$ is at least of dimension 3. We conclude that $\tilde{\nu}_2 =\tilde{\nu}_3 =\tilde{\nu}_4 =0$ and  correspond to the trivial kernel of $D\mathbb{F}(\tilde{u})$ generated by $SE(2)$. 
     
The above provides the linear unstability of $\tilde{u}$.
     For the nonlinear unstability, we conclude the proof  using Section 5.1 of  \cite{henry_semilinear}.
\end{proof}

\subsubsection{The stable hexagonal pattern}\label{ssec : stable hexagonal pattern}

In this section, we investigate the stability of the localized hexagonal pattern $\tilde{u}$, whose existence is established in Theorem 4.2 of \cite{sh_cadiot}. We recall that $\mathbb{F}(\tilde{u}) = 0$ with $\mu = 0.32$, $\nu_1 = -1.6$ and $\nu_2 = 1$. Moreover, we denote $u_0 \in \mathcal{H}_{cc}$ the associated approximate solution (represented in Figure \ref{fig : hexagone pattern}) satisfying \eqref{eq : approximate solution} and \eqref{eq : defect with true solution} with $r_0 = 8.21 \times 10^{-6}$ in that case (cf. Theorem 4.2 in \cite{sh_cadiot}).
\begin{figure}[H]
\centering
 \begin{minipage}{.55\linewidth}
  \centering\epsfig{figure=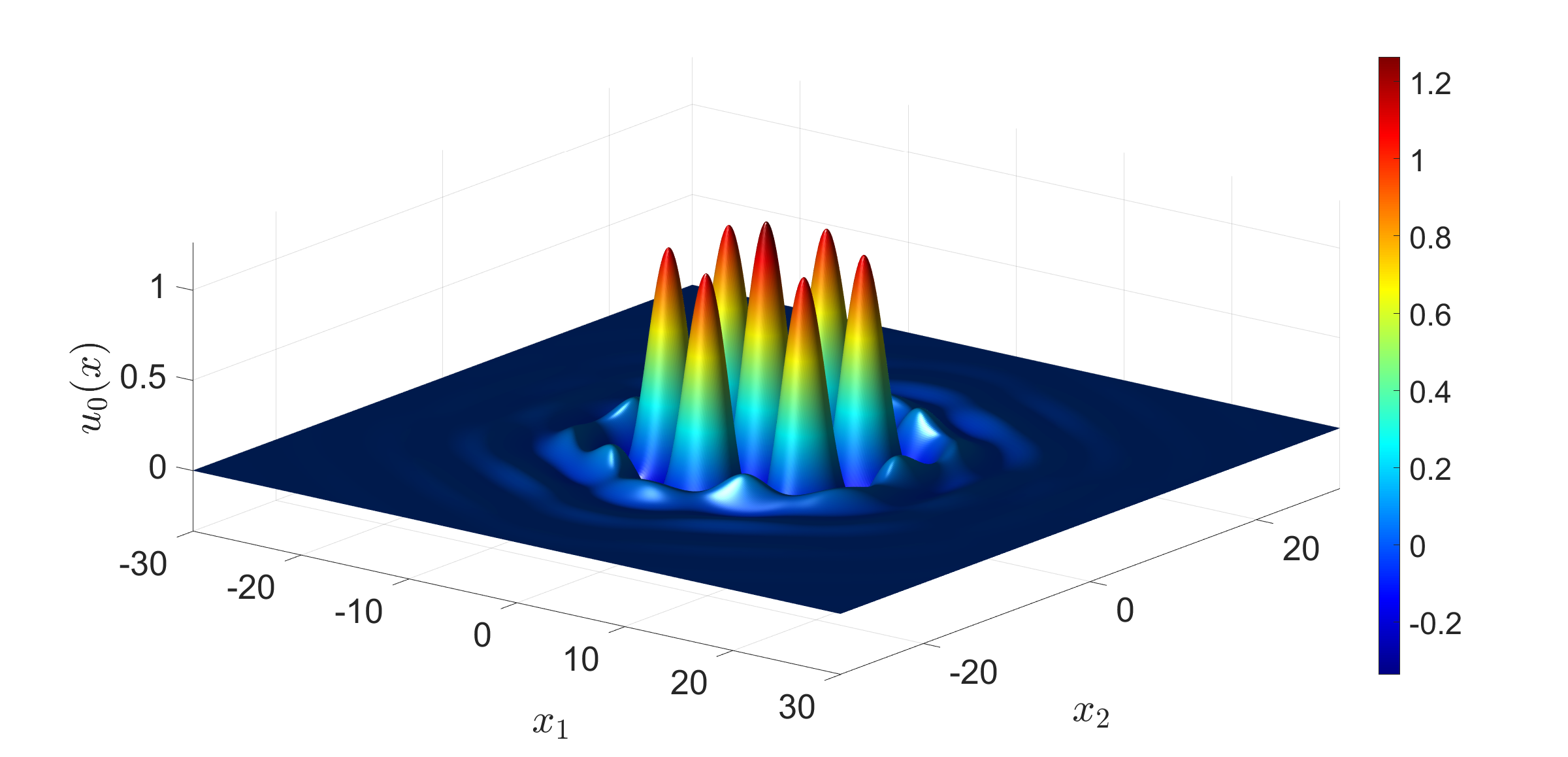,width=\linewidth}
 \end{minipage}%
 \begin{minipage}{.55\linewidth}
  \centering\epsfig{figure=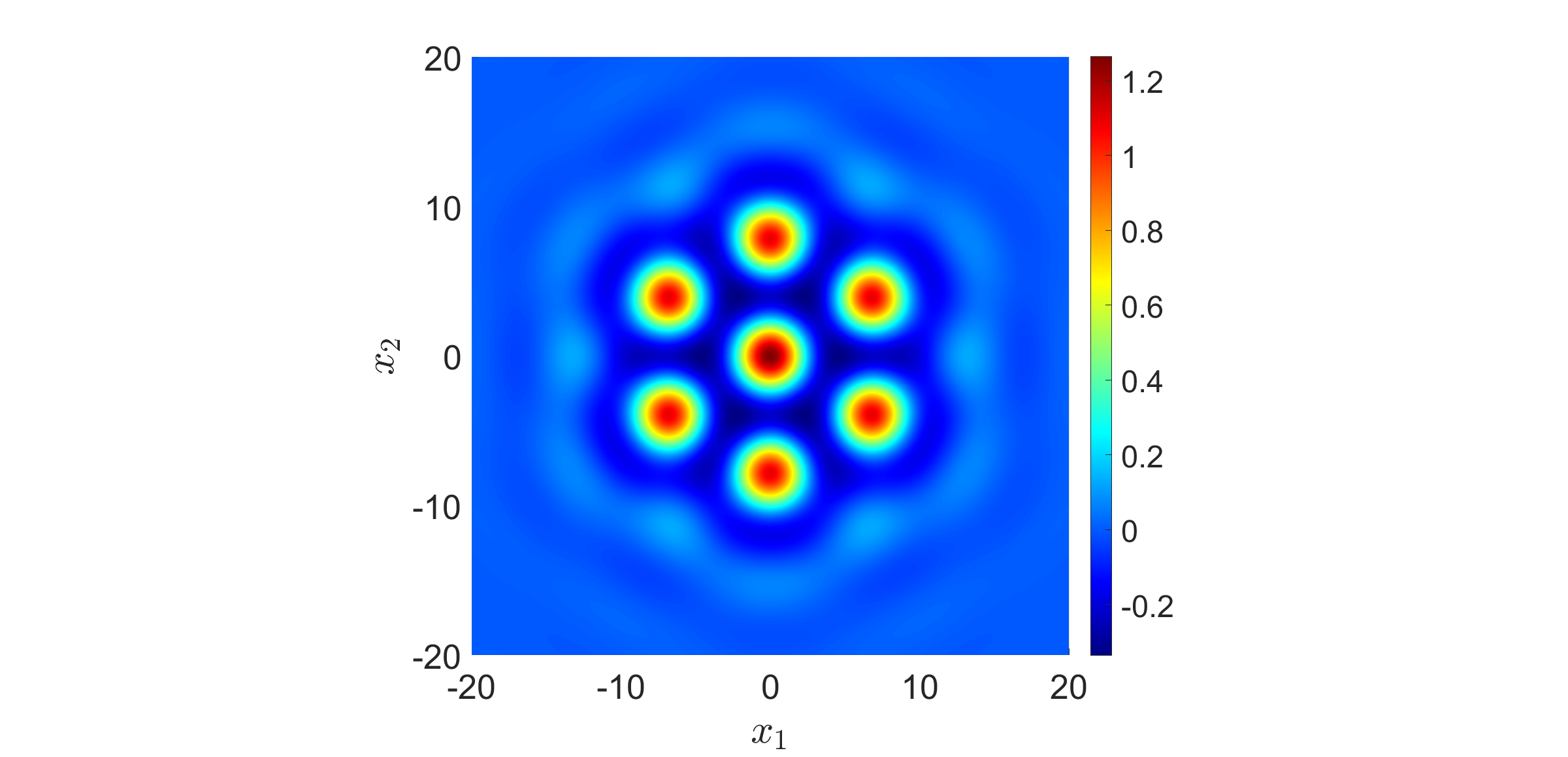,width=\linewidth}
 \end{minipage} 
 \caption{Side and upper views of the approximate solution $u_0$ of the hexagonal pattern established in Theorem 4.2 in \cite{sh_cadiot}} \label{fig : hexagone pattern}
 \end{figure}
In this case, we want to show that $\tilde{u}$ is stable modulo trivial invariances. We first define such a notion of stability. Let $\mathcal{S}(\tilde{u})$ be the set defined as 
\begin{align}\label{eq : invariances of tilde u}
   \mathcal{S}(\tilde{u}) =  \{u \in \mathcal{H}, ~ u = g \tilde{u} \text{ for some } g \in SE(2)\}.
\end{align}
 In other terms, $\mathcal{S}(\tilde{u})$ is the set of all possible translations  and rotations of $\tilde{u}$ on the plane.  Then, we define the stability modulo trivial invariances as follows.
\begin{definition}\label{def : stable modulo trivial invariances}
    We say that $\tilde{u}$ is \emph{stable modulo trivial invariances} if for all $\epsilon >0$, there exists $\alpha>0$ such that if $\|\tilde{u}-v_0\|_\mathcal{H} < \alpha,$ then there exists a unique solution $v= v(t)$ to \eqref{eq : swift original} such that $v(0) = v_0$ and defined for all $0 \leq t < \infty$ such that $\sup_{t \in [0,\infty)} \|v(t)-w\|_{2} \leq \epsilon$, where $w \in \mathcal{S}(\tilde{u}).$
\end{definition}
Note that the definition above correspond to a nonlinear notion of stability. In order to prove that $\tilde{u}$ is stable modulo trivial invariances, we show that the kernel of $D\mathbb{F}(\tilde{u})$ is of dimension $3$, containing exactly the trivial invariances of translation and rotation.
\begin{theorem}\label{th : stable hexagon solution}
    Let $\tilde{u}$ be the stationary localized solution  of \eqref{eq : swift original} given in Theorem 4.2 in \cite{sh_cadiot}. Then, $\tilde{u}$ is  stable modulo trivial invariances.
\end{theorem}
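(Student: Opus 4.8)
The plan is to mirror the proof of Theorem~\ref{th : unstable square solution}, every step of which applies to the present PDE; only the data $u_0$ and the radius $r_0 = 8.21\times 10^{-6}$ change. First I would fix $\delta_0 \bydef 10^{-2}$, set $\delta \bydef \mu - \delta_0$ so that $\sigma_\delta = (-\delta_0,\infty)$, and bound $\|D\mathbb{G}(\tilde{u})\|_2$ exactly as in~\eqref{eq : estimation diff DG} to produce a constant $\lambda_{\max}$ with $\text{Eig}(D\mathbb{F}(\tilde{u})) \cap \sigma_\delta \subset (-\delta_0,\lambda_{\max}]$; the same bound gives invertibility of $DF(U_0)+tI$ and $D\mathbb{F}(\tilde{u})+tI$ for any $t < -\lambda_{\max}$, fixing a $t$ as in Proposition~\ref{prop : minimum of the spectrum full}. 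The closed Jordan domain would then be $\mathcal{J} \bydef [-\delta_0,\lambda_{\max}] \subset \overline{\sigma_\delta}$.

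Next I would use the decomposition $\mathcal{H} = \mathcal{H}_{cc}\oplus\mathcal{H}_{sc}\oplus\mathcal{H}_{cs}\oplus\mathcal{H}_{ss}$ together with the disjoint splitting~\eqref{eq : identity symmetry spectrum SH}, and compute the bounds of Lemmas~\ref{lem : link of spectrum unbounded to U0} and~\ref{lem : link of spectrum Fourier coeff in q} for each restriction $D\mathbb{F}_{ij}(\tilde{u})$, $i,j\in\{c,s\}$, via the explicit formulas detailed in the proof of Theorem~\ref{th : unstable square solution} (the same $C_\lambda = \frac{1.335}{\sqrt{\mu+\lambda}}$, $a_\lambda = \frac{\sqrt{-1+\sqrt{1+\mu+\lambda}}}{2}$, the same matrix-norm reductions, all implemented in~\cite{julia_cadiot}). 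As in that proof, I expect a first application of Theorem~\ref{th : gershgorin unbounded} to yield disks that are not all pairwise disjoint but that still bound every eigenvalue in $\mathcal{J}$ below a small threshold; this allows a sharper choice of $t$, after which a second application of Theorem~\ref{th : gershgorin unbounded} should give, in each sector, a clean separation of the near-zero eigenvalue from the negative cluster.

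The outcome I expect is the following. In $\mathcal{H}_{cc}$ every eigenvalue of $D\mathbb{F}_{cc}(\tilde{u})$ in $\mathcal{J}$ is strictly negative (no Gershgorin disk reaches $0$), so $D\mathbb{F}_{cc}(\tilde{u})$ has trivial kernel and no positive eigenvalue. In each of $\mathcal{H}_{sc}$, $\mathcal{H}_{cs}$ and $\mathcal{H}_{ss}$, Theorem~\ref{th : gershgorin unbounded} should produce exactly one eigenvalue (counted with multiplicity) in a small interval around $0$, disjoint from the rest of the spectrum, with the remainder strictly negative. By Lemma~\ref{lem : radially symmetric}, $\partial_{x_1}\tilde{u}\in \text{Ker}(D\mathbb{F}_{sc}(\tilde{u}))\setminus\{0\}$ and $\partial_{x_2}\tilde{u}\in \text{Ker}(D\mathbb{F}_{cs}(\tilde{u}))\setminus\{0\}$, which forces the single eigenvalue in $\mathcal{H}_{sc}$ and in $\mathcal{H}_{cs}$ to be exactly $0$ and simple; and, after verifying rigorously --- as in the proof of Theorem~\ref{th : unstable square solution}, by exhibiting $x,y\in\R^2$ with $|x|_2=|y|_2$ and $\tilde{u}(x)\neq\tilde{u}(y)$ --- that $\tilde{u}$ is not radially symmetric, Lemma~\ref{lem : radially symmetric} gives $\partial_\theta\tilde{u}\in \text{Ker}(D\mathbb{F}_{ss}(\tilde{u}))\setminus\{0\}$, forcing the single eigenvalue in $\mathcal{H}_{ss}$ to be $0$ and simple as well. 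Since these three elements lie in distinct summands of the direct sum, they are linearly independent, so $\text{Ker}(D\mathbb{F}(\tilde{u})) = \text{span}\{\partial_{x_1}\tilde{u},\partial_{x_2}\tilde{u},\partial_\theta\tilde{u}\}$ is exactly $3$-dimensional, $D\mathbb{F}(\tilde{u})$ has no positive eigenvalue, and, combining the sectorial gaps with $\sigma_{ess}(D\mathbb{F}(\tilde{u})) = (-\infty,-\mu]$ from~\eqref{eq : essential sh}, there is $\gamma>0$ with $\sigma(D\mathbb{F}(\tilde{u}))\setminus\{0\}\subset(-\infty,-\gamma]$. Self-adjointness makes the eigenvalue $0$ semisimple.

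Finally I would pass from this spectral picture to the nonlinear statement of Definition~\ref{def : stable modulo trivial invariances}. The kernel $\text{span}\{\partial_{x_1}\tilde{u},\partial_{x_2}\tilde{u},\partial_\theta\tilde{u}\}$ is exactly the tangent space at $\tilde{u}$ to the smooth $3$-dimensional $SE(2)$-orbit $\mathcal{S}(\tilde{u})$ of~\eqref{eq : invariances of tilde u}; factoring out this group action and restricting to a complement of that tangent space, the linearization of the reduced semiflow at $\tilde{u}$ is sectorial with spectrum in $(-\infty,-\gamma]$, so a standard orbital stability argument for equilibria of semilinear parabolic equations with a finite-dimensional symmetry group (in the spirit of~\cite{henry_semilinear}, together with the reduction modulo $SE(2)$) furnishes, for every $\epsilon>0$, the $\alpha>0$ required. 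The hardest part will be the computer-assisted step: ensuring that the Gershgorin radii $\epsilon_n$ of Theorem~\ref{th : gershgorin unbounded} are small enough, in every symmetry sector, to isolate the zero eigenvalue from the negative cluster --- the same tight quantitative requirement already handled in Theorem~\ref{th : unstable square solution}, but now needed uniformly, including the verification that in the $\mathcal{H}_{cc}$ sector no disk reaches $0$.
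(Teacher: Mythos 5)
The computer-assisted spectral part of your argument matches what the paper does: the paper also follows ``very similar estimations and strategy as for the proof of Theorem \ref{th : unstable square solution}'' to establish that the kernel of $D\mathbb{F}(\tilde{u})$ is $3$-dimensional and that the rest of the spectrum is negative, and your sector-by-sector localization via Theorem \ref{th : gershgorin unbounded} together with Lemma \ref{lem : radially symmetric} to pin the three near-zero eigenvalues to exactly $0$ is the intended machinery. Your expected spectral picture (no positive eigenvalues, a $3$-dimensional kernel spanned by $\partial_{x_1}\tilde{u}$, $\partial_{x_2}\tilde{u}$, $\partial_\theta\tilde{u}$, and a gap $\gamma>0$ below $0$) is correct and is what the computations in \cite{julia_cadiot} are designed to certify.

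Where there is a genuine gap is in the passage from this spectral picture to the nonlinear statement of Definition \ref{def : stable modulo trivial invariances}. You invoke ``a standard orbital stability argument for equilibria of semilinear parabolic equations with a finite-dimensional symmetry group (in the spirit of \cite{henry_semilinear}, together with the reduction modulo $SE(2)$).'' The difficulty is that $SE(2)$ is \emph{non-compact}: its translational factor is unbounded, the orbit $\mathcal{S}(\tilde{u})$ is not a compact manifold, and the classical Henry-type stable-manifold argument for an isolated hyperbolic equilibrium --- or even its usual adaptation to a \emph{compact} symmetry group --- does not directly produce the uniform-in-time estimate required. ``Reduction modulo $SE(2)$'' is precisely the non-trivial step you must carry out, and gesturing at Henry plus that phrase does not close it. The paper handles this with a dedicated result: it applies Theorem 1 of \cite{sandstede_1997_spiral}, which addresses orbital stability of relative equilibria under the non-compact Euclidean group, and verifies Hypotheses 1--3 there --- generation of the semiflow by the sectorial operator $\mathbb{L}:H^4(\R^2)\to L^2$, the spectral hypothesis on the linearization after removing the three group modes (obtained via Hypothesis 4 and Lemma 5.4 of \cite{sandstede_1997_spiral}), and smoothness of the $SE(2)$-action at the localized, smooth $\tilde{u}$. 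To make your final paragraph rigorous you would need to identify and verify the hypotheses of exactly such a theorem rather than appeal to a ``standard'' argument.
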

\begin{proof}
    Following very similar estimations and  strategy as for the proof of Theorem \ref{th : unstable square solution}, we prove in \cite{julia_cadiot} that the kernel of $D\mathbb{F}(\tilde{u})$ is of dimension $3$ and that the rest of its spectrum is negative.

    In order to conclude about stability modulo trivial invariances, we use \cite{sandstede_1997_spiral}. Indeed, our goal is to apply Theorem 1 in \cite{sandstede_1997_spiral}. To do so, we prove that the Hypotheses 1-3 in Section 2 are satisfied. First, notice that the operator $\mathbb{L} : H^4(\R^2) \to L^2(\R)$ generates a semi-flow $\phi_t$ for \eqref{eq : swift original}, since \eqref{eq : swift original} is semi-linear and $\mathbb{L}$ has a strictly negative spectrum, uniformly bounded from above (see \cite{henry_semilinear}).  Consequently, since $\tilde{u}$ is a steady state of \eqref{eq : swift original}, we have that Hypothesis 1 is readily satisfied. Concerning Hypothesis 2, we verify it using Lemma 5.4 from \cite{sandstede_1997_spiral} since Hypothesis 4 is satisfied. Finally, we prove Hypothesis 3 using Section 4 of \cite{sandstede_1997_spiral} and the fact that $SE(2)$ acts smoothly on $\tilde{u}$ (since $\tilde{u}$ is smooth and localized).
\end{proof}

\begin{remark}
    Note that \cite{henry_semilinear} and \cite{sandstede_1997_spiral} provide a stronger notion of stability for $\tilde{u}$. Indeed, we have that $\tilde{u}$ is asymptotically stable in the sense that there exists $C>0$, $\beta >0$ and $t_0$ such that $\|v(t) - w\|_2 \leq C e^{-\beta t}$ for all $t \geq t_0$, where $v$ and $w$ are the functions of Definition \ref{def : stable modulo trivial invariances}.
\end{remark}

\subsection{The capillary-gravity Whitham equation}\label{ssec : application whitham}

We turn our interest to the capillary-gravity Whitham equation, which is a nonlocal equation reading as follows
\begin{equation}\label{eq : original Whitham}
   u_t +  \partial_x{\mathbb{M}_T}u + \frac{1}{2} u \partial_xu =0,  ~ u = u(x,t), ~ x \in \R,
\end{equation}
where $\mathbb{M}_T$ is a Fourier multiplier operator defined via its symbol 
\begin{equation}\label{def : Kernel whitham}
        \mathcal{F}(\mathbb{M}_Tu)(\xi) \bydef m_T(2\pi\xi) \hat{u}(\xi) \bydef \sqrt{\frac{\tanh(2\pi\xi)(1+T(2\pi\xi)^2)}{2\pi\xi}}\hat{u}(\xi)
\end{equation}
for all $\xi \in \mathbb{R}$. The quantity $T > 0$ is the Bond number accounting for the capillary effects (also known as surface tension). In particular, we are interested in localized traveling wave solutions, also known as \emph{solitary waves}, to \eqref{eq : original Whitham}. Using the ansatz $X = x-ct$, we look for $u : \R \to \R$ satisfying
    \begin{align}\label{eq : whitham stationary}
   \mathbb{F}(u) \bydef {\mathbb{M}_T}u -cu + u^2 =0
\end{align}
where $c \in \mathbb{R}$ and $u(x) \to 0$ as $|x| \to \infty.$  Using the notations of Section \ref{sec : presentation of the problem}, we have 
\begin{align*}
    \mathbb{L} = \mathbb{M}_T - c I, ~~ l(\xi) = m_T(2\pi\xi) - c  ~~  \text{ and } ~~  \mathbb{G}(u) = u^2.
\end{align*}
In this subsection, we fix $c =0.8$ and $T = 0.5$.
Our goal is to study the spectrum of $D\mathbb{F}(\tilde{u})$, where $\tilde{u}$ is the solution for which the existence was constructively established in Theorem 4.8 of \cite{cadiot2024constructiveproofsexistencestability} (denoted $\tilde{u}_3$) in a vicinity of an approximate solutio $u_0$ represented in Figure \ref{fig : whitham}. In particular, we have that 
\begin{align*}
    \|(I - T\Delta)\mathbb{L}(\tilde{u}-u_0)\|_2 \leq r_0 = 8.7\times 10^{-9}.
\end{align*}
\begin{figure}[H]
\centering
 \begin{minipage}{.7\linewidth}
\centering\epsfig{figure=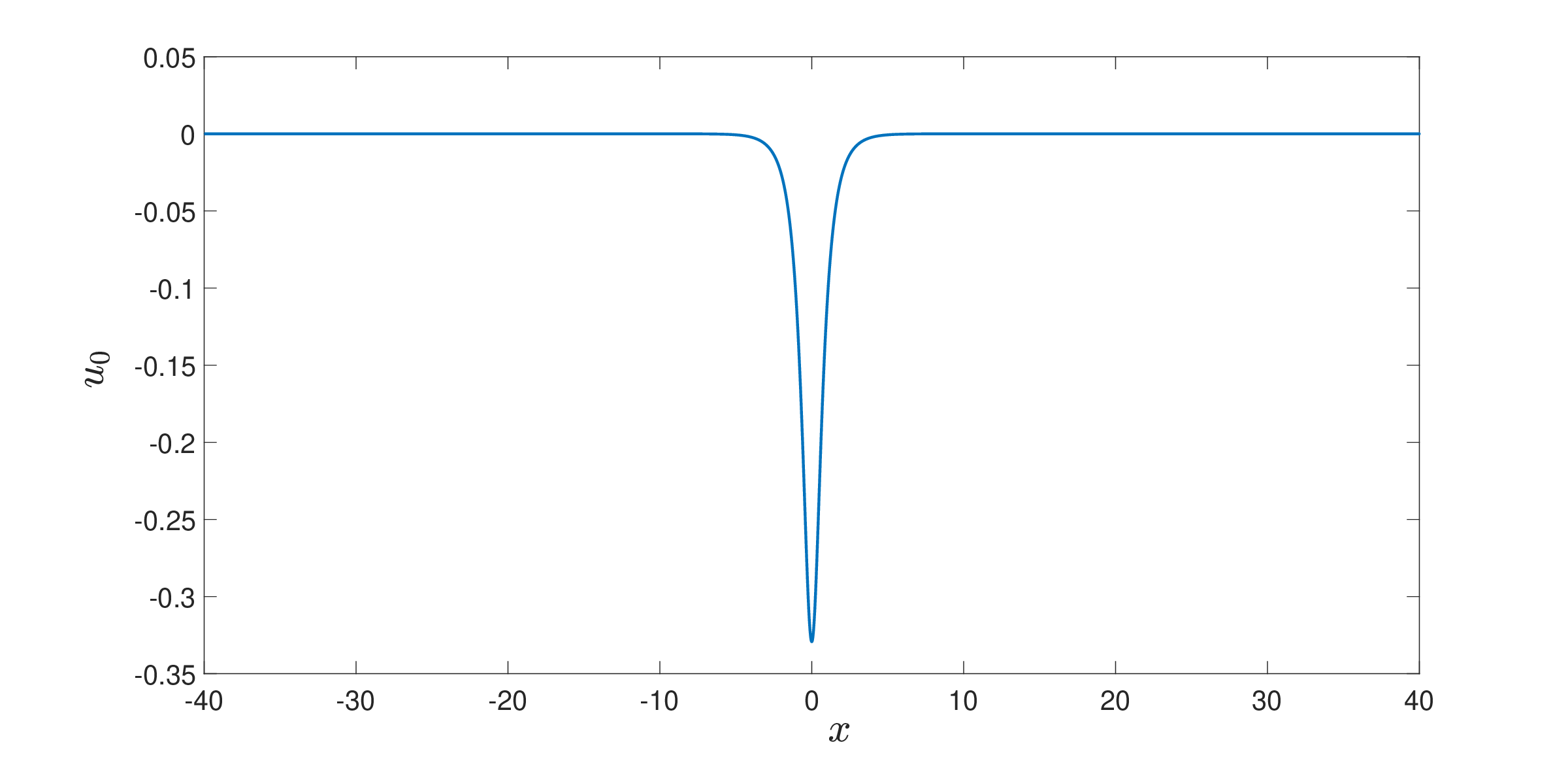,width=\linewidth}
 \end{minipage}%
 \caption{Approximate solution $u_0$ of the solitary wave  $\tilde{u}_3$ established in Theorem 4.8 in \cite{sh_cadiot}} \label{fig : whitham}
 \end{figure}
Note that in \cite{cadiot2024constructiveproofsexistencestability}, the spectral (linear) stability of $\tilde{u}$ has already been established combining computer-assisted proofs of isolated eigenvalues and non-existence of eigenvalues well-chosen regions of the real line. Our goal is to demonstrate that the framework of this manuscript allows to obtain the spectral stability much more efficiently.

Similarly as the Swift-Hohenberg case, the existence of $\tilde{u}$ is established in \cite{cadiot2024constructiveproofsexistencestability} in the even subspace of $\mathcal{H}$. Consequently, we denote 
\begin{align*}
    \mathcal{H}_c &\bydef \left\{u \in \mathcal{H}, ~ u(x) = u(-x) \text{ for all } x \in \R\right\}\\
    \mathcal{H}_s &\bydef \left\{u \in \mathcal{H}, ~ u(x) = -u(-x) \text{ for all } x \in \R\right\}.
\end{align*}
In terms of Fourier series, $\mathcal{H}_c$ will lead to cosine series and $\mathcal{H}_s$ to sine series. Now, since $\tilde{u}, u_0 \in \mathcal{H}_c$, we can consider $D\mathbb{F}_c(\tilde{u}) : \mathcal{H}_c \to L^2_c$ and $D\mathbb{F}_s(\tilde{u}) : \mathcal{H}_s \to L^2_s$, the restrictions of $D\mathbb{F}(\tilde{u})$ to $\mathcal{H}_c \to L^2_c$ and $\mathcal{H}_s \to L^2_s$ respectively. Moreover, similarly as for Section \ref{ssec : application SH}, we have that 
\begin{align*}
    \sigma(D\mathbb{F}(\tilde{u})) = \sigma(D\mathbb{F}_c(\tilde{u}))\cup \sigma(D\mathbb{F}_s(\tilde{u})).
\end{align*}
Using such a decomposition, we apply Theorem \ref{th : gershgorin unbounded} and obtain the following.
% In particular, we provide enclosures for the first three eigenvalues of $D\mathbb{F}(\tilde{u})$, demonstrating the potential of our approach for controlling multiple eigenvalues simultaneously. 
\begin{lemma}\label{lem : enclosures whitham}
    Let $\tilde{u}$ be the solitary wave given in Theorem 4.8 in \cite{cadiot2024constructiveproofsexistencestability} corresponding to $T=0.5$ and $c=0.8$. Then,  $\sigma_{ess}(D\mathbb{F}(\tilde{u})) = [0.2, \infty)$. Moreover,   $D\mathbb{F}(\tilde{u})$ possesses exactly three eigenvalues in  $(-\infty, 0.16]$, denoted $\nu_1, \nu_2, \nu_3$, which satisfy the following 
    \begin{equation}\label{eq : eigenvalues nui in whitham}
        \nu_1 \in [0.2691, 0.2704], ~~ 
        \nu_2 =0 ~~ \text{ and } ~~
        \nu_3 \in [-0.1294, -0.1268].
     \end{equation}
    Finally, $\tilde{u}$ is spectrally stable.
\end{lemma}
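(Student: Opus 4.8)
The plan is to derive the statement from Theorem~\ref{th : gershgorin unbounded}, applied separately to the two symmetry restrictions $D\mathbb{F}_c(\tilde u):\mathcal{H}_c\to L^2_c$ and $D\mathbb{F}_s(\tilde u):\mathcal{H}_s\to L^2_s$ and then recombined via $\sigma(D\mathbb{F}(\tilde u))=\sigma(D\mathbb{F}_c(\tilde u))\cup\sigma(D\mathbb{F}_s(\tilde u))$. Since $\mathbb{M}_T-cI$ and $D\mathbb{G}(\tilde u)=2\tilde u\,\cdot$ are self-adjoint on $L^2$, so is $D\mathbb{F}(\tilde u)$; its spectrum is real and, in the self-adjoint form of the Remark following Theorem~\ref{th : gershgorin unbounded}, every Gershgorin set $\overline{B_{\epsilon_n}(\lambda_n)}$ is a closed real interval. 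For the essential spectrum, Lemma~\ref{lem : separation of the spectrum sigma 0} together with \eqref{eq : essential spectrum} gives $\sigma_{ess}(D\mathbb{F}(\tilde u))=\{l(\xi):\xi\in\R\}$ with $l(\xi)=m_T(2\pi\xi)-c$; a direct interval-arithmetic study of the explicit symbol \eqref{def : Kernel whitham} at $T=0.5$, $c=0.8$ --- using that $l$ is even, extremal at $\xi=0$ with $m_T(0)=1$, and unbounded --- determines this half-line and hence fixes $\sigma_\delta$ for every small $\delta>0$.

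Next I would isolate a bounded Jordan domain $\mathcal{J}\subset\overline{\sigma_\delta}$ containing all of $\text{Eig}(D\mathbb{F}(\tilde u))$, arguing as in the proof of Theorem~\ref{th : unstable square solution}: if $\mathbb{L}u+D\mathbb{G}(\tilde u)u=\lambda u$ with $\|u\|_2=1$, then $\min_\xi|l(\xi)-\lambda|\le\|2\tilde u u\|_2\le2\|\tilde u\|_\infty$ and $\|\tilde u\|_\infty\le\|U_0\|_1+\kappa r_0$, where $\kappa$ is the embedding constant of $\mathcal{H}$ into $L^\infty$ and $r_0$ is as in \eqref{eq : defect with true solution}; combined with the location of $\text{range}(l)$ this confines all eigenvalues to an explicit $\mathcal{J}$, and the same estimate shows that $DF(U_0)+tI$ and $D\mathbb{F}(\tilde u)+tI$ are boundedly invertible for $t$ chosen just outside $-\mathcal{J}$, so Proposition~\ref{prop : minimum of the spectrum full} applies. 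With $\mathcal{J}$ fixed I would make the constants of Lemmas~\ref{lem : link of spectrum unbounded to U0} and \ref{lem : link of spectrum Fourier coeff in q} explicit: the operator-theoretic quantities $Z_{1,i}$, $\|\pi^N(S+tI)^{-1}P^{-1}(L-\mu I)\|_2$, and the matrix sums defining the radii $r_n$ are handled by rigorous numerics exactly as in Section~\ref{sec : control spectrum DF fourier}, whereas $\mathcal{Z}_{u,i}$, $\mathcal{Z}_{u,i}^{(q)}$ and $\mathcal{C}_i$ reduce --- following the analysis of \cite{cadiot2024constructiveproofsexistencestability}, which plays here the role that Section~3.5 of \cite{sh_cadiot} plays for Swift--Hohenberg --- to a uniform-in-$\lambda$ exponential bound $|\mathcal{F}^{-1}(1/(l-\lambda))(x)|\le C_\lambda e^{-a_\lambda|x|}$ for $\lambda\in\mathcal{J}$, with the supremum over $q\in[d,\infty)$ controlled by a fixed multiple of the $q=d$ value; the $(I-T\Delta)$ weight present in the norm of \eqref{eq : defect with true solution} must be carried through the estimation of $\mathcal{C}_1$ and $\mathcal{C}_2$.

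With all bounds explicit I would apply Theorem~\ref{th : gershgorin unbounded} to $D\mathbb{F}_c(\tilde u)$ and to $D\mathbb{F}_s(\tilde u)$ and let the computer verify that, inside $\mathcal{J}$, the Gershgorin intervals centered at the $\lambda_n$ split into isolated ones around the three candidates and disjoint from all the rest; this yields exactly three eigenvalues of $D\mathbb{F}(\tilde u)$, with the enclosures recorded in \eqref{eq : eigenvalues nui in whitham}. To pin the interval around $0$ to the exact simple eigenvalue $\nu_2=0$, I would use that $\mathbb{F}$ is autonomous, hence $D\mathbb{F}(\tilde u)\tilde u'=0$, and that $\tilde u'$ is odd and nonzero (otherwise $\tilde u$ would be constant, contradicting decay and $\tilde u\neq0$), so $\tilde u'\in\mathcal{H}_s\setminus\{0\}$ places $0\in\sigma(D\mathbb{F}_s(\tilde u))$ in that isolated interval; disjointness forces multiplicity one there, and since the $\mathcal{H}_c$ intervals exclude $0$, the zero eigenvalue is simple in the full space. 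Finally, combining the resulting spectral picture with the stability criterion for solitary waves of \eqref{eq : original Whitham} established in \cite{cadiot2024constructiveproofsexistencestability} gives that $\tilde u$ is spectrally stable.

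I expect the main obstacle to be the analytic input of the second step: producing rigorous enclosures of $\mathcal{Z}_{u,i}$, $\mathcal{Z}_{u,i}^{(q)}$ and $\mathcal{C}_i$ that are uniform in $\lambda\in\mathcal{J}$ and in $q\in[d,\infty)$ for the non-polynomial symbol \eqref{def : Kernel whitham}, since there is no closed-form resolvent kernel and one must run a Cauchy-theorem and decay argument on $\mathcal{F}^{-1}(1/(l-\lambda))$. A secondary difficulty is calibrating the truncation size $N$ so that the three Gershgorin intervals genuinely separate from one another and from the remaining discrete spectrum.
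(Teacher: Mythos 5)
Your proposal follows essentially the same route as the paper: split the self-adjoint operator $D\mathbb{F}(\tilde u)$ into even/odd restrictions, compute the essential spectrum from the explicit symbol $l(\xi)=m_T(2\pi\xi)-c$, make the bounds of Lemmas~\ref{lem : link of spectrum unbounded to U0} and~\ref{lem : link of spectrum Fourier coeff in q} explicit via the analysis of \cite{cadiot2024constructiveproofsexistencestability}, apply Theorem~\ref{th : gershgorin unbounded} to each restriction, pin the near-zero interval to the exact value $0$ via translation invariance ($\tilde u'\in\mathcal{H}_s\setminus\{0\}$ lies in the kernel), and conclude spectral stability from the Vakhitov--Kolokolov criterion of \cite{cadiot2024constructiveproofsexistencestability}. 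The only difference is cosmetic: you re-derive the a priori eigenvalue bound and the choice of $t$ from scratch in the style of Theorem~\ref{th : unstable square solution}, whereas the paper cites Lemma~5.2 of \cite{cadiot2024constructiveproofsexistencestability} for those ingredients.
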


\begin{proof}
First, notice that $l(\xi) \geq l(0) = 1-c = 0.2$ for all $\xi \in \R$. This implies that  $\sigma_{ess}(D\mathbb{F}(\tilde{u})) = [0.2, \infty)$ (cf. Lemma \ref{lem : separation of the spectrum sigma 0}). Using that $D\mathbb{F}(\tilde{u}) : L^2 \to L^2$ is self-adjoint, we have that $\sigma(D\mathbb{F}(\tilde{u})) \subset \R$. In particular,  we fix $\delta = 0.4$ and consider the restriction of $\sigma_\delta$ to $\R$, which we still denote $\sigma_\delta$ for convenience. In particular, $\sigma_\delta = (-\infty,0.16)$.

    In order to compute values for the sequence $(\epsilon_n)$ from Theorem \ref{th : gershgorin unbounded}, we use the results of Section 5 from \cite{cadiot2024constructiveproofsexistencestability}. In particular, these results provide explicit formulas for the bounds of Lemma \ref{lem : link of spectrum unbounded to U0} and \ref{lem : link of spectrum Fourier coeff in q}. For instance, a value for $t$ is given in Lemma 5.2 in \cite{cadiot2024constructiveproofsexistencestability}. Following the reasoning of the proof of Theorem \ref{th : unstable square solution}, we implement such formulas in \cite{julia_cadiot} and apply Theorem \ref{th : gershgorin unbounded}.

    Applying Theorem \ref{th : gershgorin unbounded} to the even restriction $D\mathbb{F}_c(\tilde{u})$, we obtain the enclosures for $\nu_1$ and $\nu_3$ given in \eqref{eq : eigenvalues nui in whitham}. In particular, the disks containing $\nu_1$ and $\nu_3$ do not intersect the rest of the disks given by Theorem \ref{th : gershgorin unbounded}.  Then, applying similarly Theorem \ref{th : gershgorin unbounded} to the odd restriction $D\mathbb{F}_c(\tilde{u})$, we obtain that $\nu_2 \in [-0.0011,0.0011]$. However, we know that $D\mathbb{F}(\tilde{u})$ has at least a 1D kernel coming from the translation invariance of solutions. This implies that $\nu_2 = 0$. We conclude the spectral stability proof using the negativity of the Vakhitov–Kolokolov quantity established in \cite{cadiot2024constructiveproofsexistencestability}.
\end{proof}

\begin{remark}
Using the previous lemma, we are able to control the spectrum of $D\mathbb{F}(\tilde{u})$ on \\ $(-\infty,0.16]\cup[0.2,\infty)$. 
   Numerically, we observe that $D\mathbb{F}(\tilde{u})$ possesses an eigenvalue around $0.19$, which we were not able to enclose. In order to enclose this eigenvalue, we would need to choose $\delta < 0.1$ in the  proof of Lemma \ref{lem : enclosures whitham}. In this case, our estimations lead to bounds $\mathcal{Z}_{u,i}$ in Lemma \ref{lem : link of spectrum unbounded to U0} which are too big to obtain non-intersecting Gershgorin disks in Theorem \ref{th : gershgorin unbounded}. In other words, the eigenvalues which are close to the essential spectrum lead to less localized eigenvectors. In turn, such eigenvectors are more difficult to approximate using a Fourier series on $\om$.
\end{remark}

\subsection{The planar Gray-Scott model}\label{sec : application GS}

In this section, we investigate the stability of localized stationary patterns in the planar Gray-Scott model :
\begin{equation}\label{eq : gray_scott cov}
  \partial_t \mathbf{u} = \mathbf{D}_1 \Delta \mathbf{u} + \mathbf{D}_2 \mathbf{u} + \mathbb{G}(\mathbf{u}) = \mathbb{F}(\mathbf{u}) ; ~~ \mathbf{u} = (u_1,u_2) : \R^2 \to \R^2
\end{equation}
where $\mathbf{D}_1$ and $\mathbf{D}_2$ are matrices given as  
\begin{align*}
    \mathbf{D}_1 \bydef  \begin{pmatrix}
        \lambda_1 & 0\\
        0 & 1
    \end{pmatrix} ~~ \text{ and } ~~ \mathbf{D}_2 \bydef \begin{pmatrix}
        -1 & 0\\
        \lambda_1\lambda_2-1 & -\lambda_2
    \end{pmatrix}, 
\end{align*}
and $\lambda_1, \lambda_2 >0$ are given parameters. Moreover, the nonlinear term $\mathbb{G}$ is defined as follows
\begin{align*}
    \mathbb{G}(\mathbf{u}) \bydef  \begin{pmatrix}
         (u_2 + 1 - \lambda_1 u_1)u_1^2 \\ 0
    \end{pmatrix}.
\end{align*}
Finally, we define $\mathbb{L}$ as 
\begin{equation}\label{eq : def of L gray scott}
    \mathbb{L} \bydef \mathbf{D}_1 \Delta + \mathbf{D}_2 I  = \begin{pmatrix}
    \lambda_1 \Delta - I & 0\\
    (\lambda_1\lambda_2-1) I & \Delta - \lambda_2 I
\end{pmatrix}
\end{equation}
and $l$ its Fourier transform as 
\begin{equation}\label{eq : def of l in gray scott}
    l(\xi) \bydef \begin{pmatrix}
    -\lambda_1 |2\pi\xi|_2^2 -1 & 0\\
     \lambda_1\lambda_2 -1 & -|2\pi\xi|_2^2 - \lambda_2
\end{pmatrix} ~~ \text{ for all } \xi \in \R^2.
\end{equation}

 Until that point, we have focused on scalar equations. However, \cite{cadiot20242dgrayscottequationsconstructive} provides that the bounds derived in Lemmas \ref{lem : link of spectrum unbounded to U0} and \ref{lem : link of spectrum Fourier coeff in q} can be explicitly computed in systems of PDEs satisfying Assumptions 1 and 2 in \cite{cadiot20242dgrayscottequationsconstructive}, which is the case for \eqref{eq : gray_scott cov}. In fact, the estimations derived in the previous sections can be applied to the different components of the system. For simplicity, we still use the scalar notations used in the previous section and naturally generalize them for system of PDEs as in \cite{cadiot20242dgrayscottequationsconstructive}. For instance, we denote $L^2 = L^2(\R^2) \times L^2(\R^2)$ and $\mathcal{H} = \{\mathbf{u} \in L^2, ~ \|\mathbf{u}\|_{\mathcal{H}} = \|\mathbb{L}\mathbf{u}\|_2 < \infty\}$. 
\begin{figure}[H]
\centering
 \begin{minipage}{.5\linewidth}
  \centering\epsfig{figure=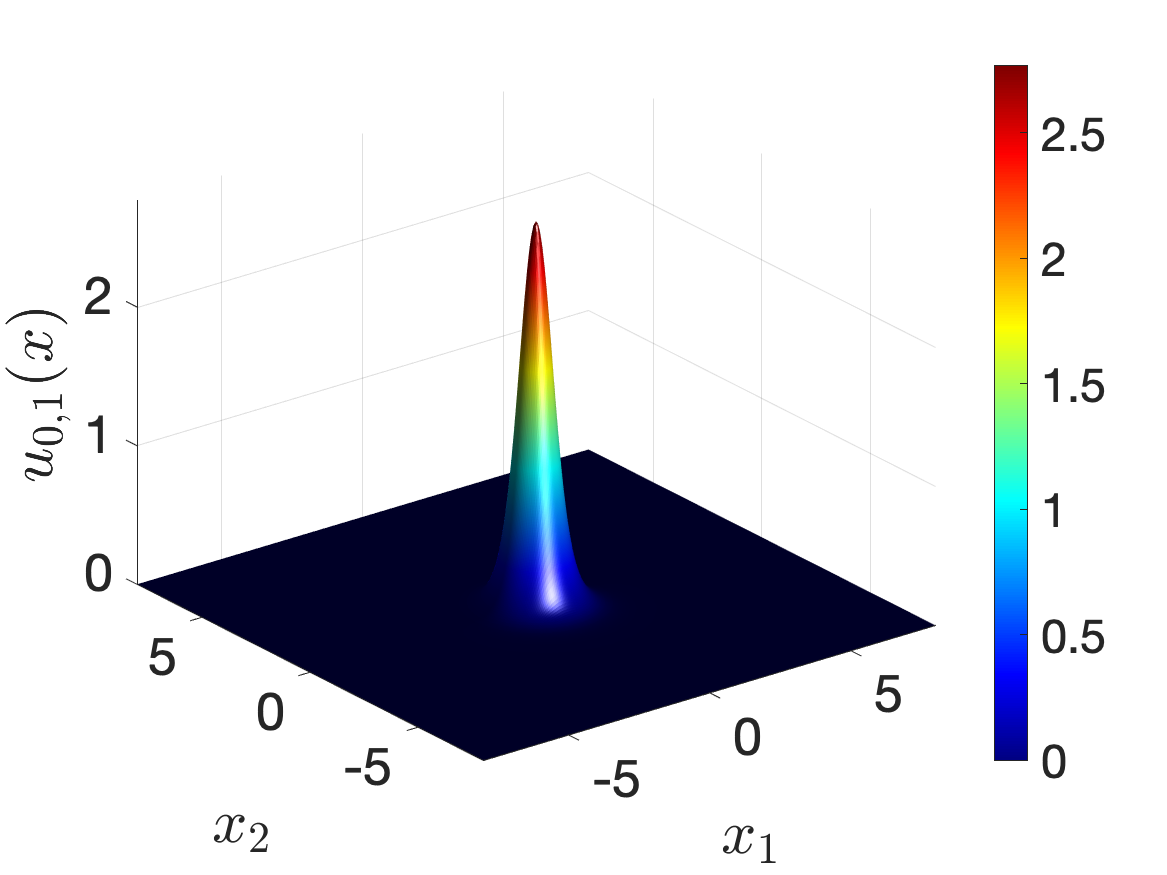,width=\linewidth}
 \end{minipage}%
 \begin{minipage}{.5\linewidth}
  \centering\epsfig{figure=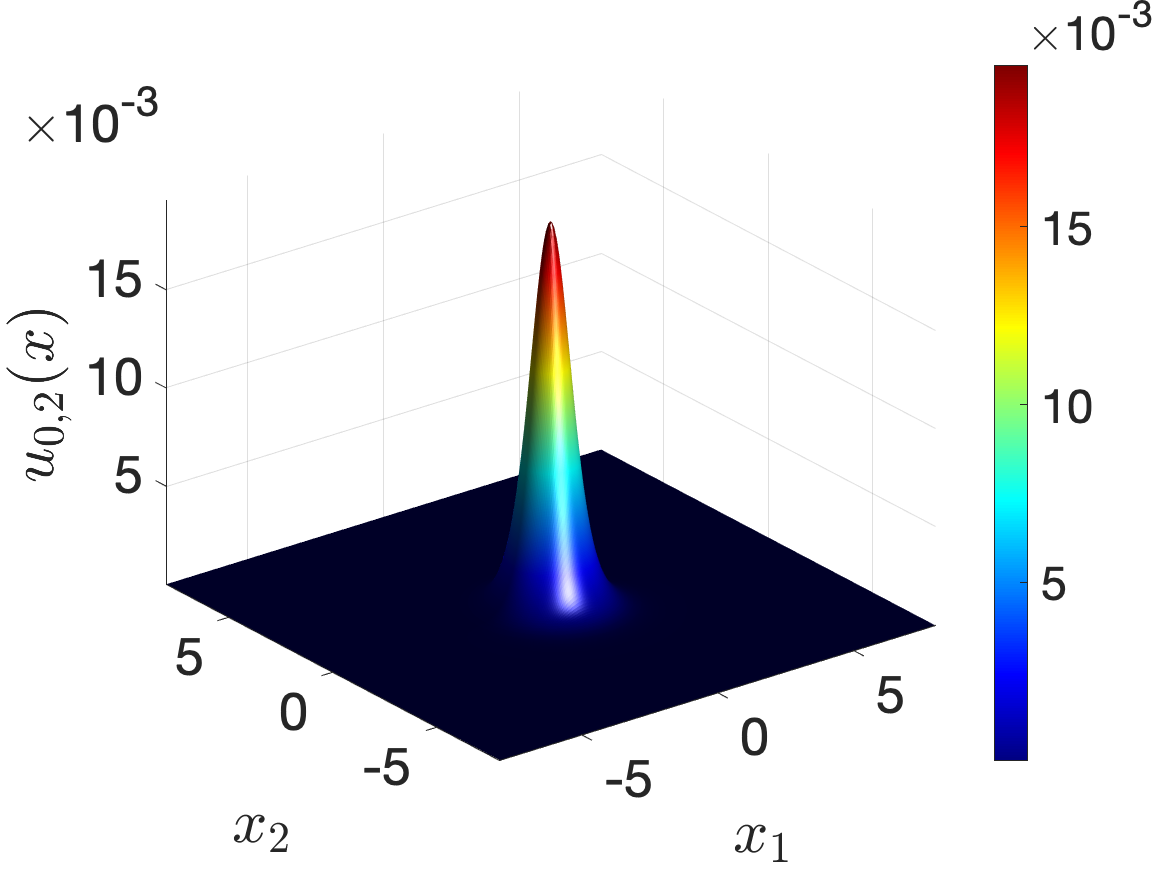,width=\linewidth}
 \end{minipage} 
 \caption{Approximation of the spike pattern established in Theorem 6.2 in \cite{cadiot20242dgrayscottequationsconstructive}. (L) is $u_{0,1}$ and (R) is $u_{0,2}$. }\label{fig : gray scott}
 \end{figure}
In the rest of this section, we will focus on the localized pattern given in Theorem 6.2 of \cite{cadiot20242dgrayscottequationsconstructive}.  In particular, we fix $\lambda_1 = \frac{1}{9}$ and $\lambda_2 = 10$, then Theorem 6.2 in \cite{cadiot20242dgrayscottequationsconstructive}  provides the existence of a localized stationary solution $\tilde{\mathbf{u}}$ to \eqref{eq : gray_scott cov} such that 
\begin{align}\label{eq : u0 for gray scott}
    \|\mathbf{\tilde{u}} - \mathbf{u}_0\|_{\mathcal{H}} \leq r_0 = 6\times 10^{-6}.
\end{align}
% Using that $\mathbb{L} = \Sigma(\mathbf{D}_1 \Delta + \mathbf{D}_2 I )$, we obtain that 
% \begin{equation}
%    \|\mathbf{\tilde{u}} - \mathbf{u}_0\|_\mathcal{H} =  \|\mathbb{L}(\mathbf{\tilde{u}} - \mathbf{u}_0)\|_2 \leq   \|\Sigma\|_2\|(\mathbf{D}_1 \Delta + \mathbf{D}_2 I )(\mathbf{\tilde{u}} - \mathbf{u}_0)\|_2 \leq \frac{3+\sqrt{5}}{2} r_0 = 
% \end{equation}
$\mathbf{u}_0 = (u_{0,1}, u_{0,2}) \in \mathcal{H}$  is the approximate solution given in \cite{cadiot20242dgrayscottequationsconstructive}, and represented in Figure \ref{fig : gray scott}. $\mathbf{u}_0$ is associated to its Fourier coefficients representation $\mathbf{U}_0.$
Moreover, $\tilde{\mathbf{u}}$ is a $D_4$-symmetric function, that is it is invariant under the symmetries of the square (corresponding to the dihedral group $D_4$). Since the dihedral group $D_2$ is a subgroup of $D_4$, we obtain that $\tilde{\mathbf{u}} \in \mathcal{H}_{cc}$ (since $\mathcal{H}_{cc}$ corresponds to $D_2$-symmetric functions in $\mathcal{H}$), where $\mathcal{H}_{cc}$ is given as  in Section \ref{ssec : application SH}. In particular, similarly as what was achieved in Section \ref{ssec : application SH}, we  have the following disjoint decomposition  
\begin{equation}\label{eq : identity symmetry spectrum}
    \sigma\left(D\mathbb{F}(\tilde{\mathbf{u}})\right) =  \sigma\left(D\mathbb{F}_{cc}(\tilde{\mathbf{u}})\right) \cup  \sigma\left(D\mathbb{F}_{cs}(\tilde{\mathbf{u}})\right) \cup  \sigma\left(D\mathbb{F}_{sc}(\tilde{\mathbf{u}})\right) \cup  \sigma\left(D\mathbb{F}_{ss}(\tilde{\mathbf{u}})\right),
\end{equation}
where $D\mathbb{F}_{ij}(\tilde{\mathbf{u}})$ is the restriction of $D\mathbb{F}(\tilde{\mathbf{u}})$ to $\mathcal{H}_{ij}$ for all $i,j \in \{c,s\}$. Moreover, since $\tilde{\mathbf{u}}$ is $D_4$-symmetric, we have that $\tilde{\mathbf{u}}(x_1,x_2) = \tilde{\mathbf{u}}(x_2,x_1)$ for all $(x_1,x_2) \in \R^2$. From this property, we obtain that 
\[
\sigma\left(D\mathbb{F}_{cs}(\tilde{\mathbf{u}})\right) =  \sigma\left(D\mathbb{F}_{sc}(\tilde{\mathbf{u}})\right),
\]
hence we simply need to compute $\sigma\left(D\mathbb{F}_{cc}(\tilde{\mathbf{u}})\right), \sigma\left(D\mathbb{F}_{cs}(\tilde{\mathbf{u}})\right)$ and $\sigma\left(D\mathbb{F}_{ss}(\tilde{\mathbf{u}})\right)$. In order to compute the estimations presented in Section \ref{sec : control spectrum DF unbounded}, we heavily rely on the analysis of Section 4 in \cite{cadiot20242dgrayscottequationsconstructive}, which provides explicit formulas for the application of Theorem \ref{th : gershgorin unbounded}. 

Moreover, we demonstrate how Theorem \ref{th : gershgorin unbounded} can be used to derive additional properties on the solution. In particular, using Lemma \ref{lem : radially symmetric}, we obtain information on the symmetries of $\tmu$.
\begin{theorem}\label{th : unstable gray scott}
    Let $\tilde{\mathbf{u}}$ be the stationary localized solution of \eqref{eq : gray_scott cov} given in  Theorem 6.2 in \cite{cadiot20242dgrayscottequationsconstructive}.  Then $\tilde{\mathbf{u}}$ is nonlinearly unstable and possesses exactly 1 unstable direction which is in $\mathcal{H}_{cc}$. Moreover $\tmu$ is radially symmetric.
\end{theorem}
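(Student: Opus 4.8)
The plan is to mirror the proof of Theorem \ref{th : unstable square solution}, using instead the explicit bound formulas for systems of PDEs derived in Section 4 of \cite{cadiot20242dgrayscottequationsconstructive}, and exploiting the symmetry decomposition \eqref{eq : identity symmetry spectrum}. The eigenvalues of the matrix symbol $l(\xi)$ in \eqref{eq : def of l in gray scott} are $-\lambda_1|2\pi\xi|_2^2-1$ and $-|2\pi\xi|_2^2-\lambda_2$, both strictly negative for every $\xi\in\R^2$; hence Lemma \ref{lem : separation of the spectrum sigma 0} gives $\sigma_{ess}(D\mathbb{F}(\tmu)) = (-\infty,-1]$ (with $\lambda_1=\tfrac19$, $\lambda_2=10$). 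Unlike the Swift--Hohenberg examples, $D\mathbb{F}(\tmu)$ is not self-adjoint here (because of the triangular structure of $\mathbf{D}_2$), so its spectrum need not be real and the Jordan domain $\mathcal{J}$ must be taken genuinely two-dimensional. I would fix a small $\delta_0>0$, set $\delta$ so that $\sigma_\delta$ is the set of $\lambda$ at distance $>\delta$ from $(-\infty,-1]$, and — exactly as in the proof of Theorem \ref{th : unstable square solution}, by bounding $\|D\mathbb{G}(\tmu)\|_2$ through $r_0$, $\|\mathbf{U}_0\|_1$ and \eqref{eq : u0 for gray scott} — produce an a priori bound $\lambda_{\max}$ with $\text{Eig}(D\mathbb{F}(\tmu))\cap\sigma_\delta\subset\{|\lambda|\le\lambda_{\max}\}$ together with an admissible $t$ for Proposition \ref{prop : minimum of the spectrum full}. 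Then $\mathcal{J}$ is a closed rectangle contained in $\overline{\sigma_\delta}$ and containing $\{\lambda : \text{Re}\,\lambda\ge-\delta_0,\ |\lambda|\le\lambda_{\max}\}$.

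Next, I would compute the constants $\mathcal{Z}_{u,i}$, $\mathcal{Z}_{u,i}^{(q)}$, $\mathcal{C}_i$ and $Z_{1,i}$ of Lemmas \ref{lem : link of spectrum unbounded to U0} and \ref{lem : link of spectrum Fourier coeff in q}, applying the estimates componentwise to the two-component system and relying on Section 4 of \cite{cadiot20242dgrayscottequationsconstructive} for the $\mu$-dependent Green's kernel bounds; the supremum over $q\in[d,\infty)$ is absorbed into a factor $2\mathcal{Z}_{u,i}$, the supremum over $\mathcal{J}$ is reduced to a boundary/corner computation as in the proof of Theorem \ref{th : unstable square solution}, and the matrix norms are handled with interval arithmetic in \cite{julia_cadiot}. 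Since $\tmu(x_1,x_2)=\tmu(x_2,x_1)$ and hence $\sigma(D\mathbb{F}_{cs}(\tmu)) = \sigma(D\mathbb{F}_{sc}(\tmu))$, it suffices to run Theorem \ref{th : gershgorin unbounded} on the three restrictions $D\mathbb{F}_{cc}(\tmu)$, $D\mathbb{F}_{cs}(\tmu)$ and $D\mathbb{F}_{ss}(\tmu)$, each in the appropriate cosine/sine Fourier representation.

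I would then read off the conclusion from the resulting Gershgorin disks. For $D\mathbb{F}_{cc}(\tmu)$ I expect exactly one isolated disk with positive real part, containing exactly one eigenvalue counted with multiplicity, and all remaining disks in $\{\text{Re}\,\lambda<0\}$: this gives exactly one unstable direction, lying in $\mathcal{H}_{cc}$. For $D\mathbb{F}_{cs}(\tmu)$ (hence $D\mathbb{F}_{sc}(\tmu)$) I expect one isolated disk around $0$ and all others in $\{\text{Re}\,\lambda<0\}$; since the $SE(2)$-invariance of \eqref{eq : gray_scott cov} forces $\partial_{x_2}\tmu\in\text{Ker}(D\mathbb{F}_{cs}(\tmu))$ and $\partial_{x_1}\tmu\in\text{Ker}(D\mathbb{F}_{sc}(\tmu))$ (the systems analogue of Lemma \ref{lem : radially symmetric}, proved componentwise), that single eigenvalue must be $0$. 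For $D\mathbb{F}_{ss}(\tmu)$ I expect \emph{all} disks to lie strictly in $\{\text{Re}\,\lambda<0\}$, so that $\text{Ker}(D\mathbb{F}_{ss}(\tmu))=\{0\}$; by the same analogue of Lemma \ref{lem : radially symmetric} this forces $\partial_\theta\tmu=0$, i.e.\ $\tmu$ is radially symmetric. Combining the sectors via \eqref{eq : identity symmetry spectrum}, $D\mathbb{F}(\tmu)$ has exactly one eigenvalue with positive real part, a two-dimensional kernel generated by the translations, and the rest of its spectrum with negative real part; nonlinear instability then follows from Section 5.1 of \cite{henry_semilinear}.

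The main obstacle is the rigorous execution of this last step: one must certify that the Gershgorin disks in all three sectors are pairwise non-intersecting and correctly placed relative to the imaginary axis and to $0$ — in particular that the $ss$-sector disks avoid $0$, which is precisely what certifies radial symmetry. This requires the bounds $\mathcal{Z}_{u,i}$ to be small, and, as in the Whitham remark, these bounds deteriorate for eigenvalues close to the essential spectrum; the choices of $\delta_0$ and of the Fourier truncation size $N$ are what make the computer-assisted proof go through. The non-self-adjointness adds a further difficulty, since every disk and every supremum over $\mathcal{J}$ must now be controlled as a genuine subset of $\mathbb{C}$ rather than of $\R$.
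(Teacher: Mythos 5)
Your proposal follows essentially the same route as the paper: the same symmetry decomposition \eqref{eq : identity symmetry spectrum}, the same three-sector application of Theorem \ref{th : gershgorin unbounded} with bounds lifted from Section 4 of \cite{cadiot20242dgrayscottequationsconstructive}, the same invocation of Lemma \ref{lem : radially symmetric} on $\text{Ker}(D\mathbb{F}_{ss}(\tmu))=\{0\}$ to get radial symmetry and to pin $\nu_2=\nu_3=0$, and the same appeal to \cite{henry_semilinear} for nonlinear instability. The only place the paper does something not spelled out in your sketch is the derivation of $\lambda_{\max}$ and the admissible $t$: rather than a direct operator-norm estimate of $\min_\xi\|(l(\xi)-\lambda I)^{-1}\|_2^{-1}$ ``as in Swift--Hohenberg,'' the paper exploits the lower-triangular structure of the Gray--Scott linearization (the second equation of $D\mathbb{F}(\tmu)\mathbf{u}=\lambda\mathbf{u}$ is linear in $\mathbf{u}$, giving $|\lambda_2+\lambda|\|u_2\|_2\le|\lambda_1\lambda_2-1|\|u_1\|_2$, which is then fed into the first equation), a small but concrete adaptation worth noting.
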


\begin{proof}
    The proof is very similar to the proof of Theorem \ref{th : unstable square solution}. In particular, we rely on \cite{cadiot20242dgrayscottequationsconstructive} in order to compute the sequence $(\epsilon_n)_{n \in \mathbb{Z}^2}$ in Theorem \ref{th : gershgorin unbounded}.

    First, using \eqref{eq : def of l in gray scott}, we obtain that 
    \begin{align}
        \sigma_{ess}(D\mathbb{F}(\tmu)) = (-\infty, ~\max\{-1,-\lambda_2\}] = (-\infty,-1]
    \end{align}
    since $\lambda_2 = 10$. Now, we find a value for $t$ such that $D\mathbb{F}(\tmu) + tI$ and $DF(\mathbf{U}_0) + tI$ are invertible as in Proposition \ref{prop : minimum of the spectrum full}. Let $\lambda \in \sigma_0$ be an eigenvalue of $D\mathbb{F}(\tmu)$ associated to an eigenvector $\mathbf{u}= (u_1,u_2)$ such that $\|\mathbf{u}\|_2=1$. Then, taking the second equation of $D\mathbf{F}(\tmu)\mathbf{u}-\lambda \mathbf{u} = 0$, we have that
    \begin{align*}
        (\lambda_1\lambda_2-1)u_1 + (\Delta - \lambda_2 - \lambda)u_2 = 0
    \end{align*}
This implies that 
\begin{align}\label{eq : proof GS th 1}
    |\lambda_2+\lambda| \|u_2\|_2 \leq \|(\Delta - \lambda_2 - \lambda)u_2\|_2 = \| (\lambda_1\lambda_2-1)u_1\|_2 \leq |\lambda_1\lambda_2 -1| \|u_1\|_2.
\end{align}
In particular, using the above, we have that $\|u_1\|_2 >0$.
Now, using the first equation of $D\mathbf{F}(\tmu)\mathbf{u}-\lambda \mathbf{u} = 0$, we get
\begin{align*}
    (\Delta-1-\lambda)u_1 + \tilde{v}_1u_1 + \tilde{v}_2u_2 = 0
\end{align*}
where $\tilde{v}_1 = 2\tilde{u}_{1}\tilde{u}_2 + 2\tilde{u}_1 - 3\lambda_1\tilde{u}_1^2$ and $\tilde{v}_2 = \tilde{u}_1^2$.
Using \eqref{eq : proof GS th 1} and the fact that $\lambda \in \sigma_0$, this implies that 
\begin{align}
    |1+\lambda| \|u_1\|_2 \leq \| \tilde{v}_1\|_\infty \|u_1\|_2 + \| \tilde{v}_2\|_\infty \|u_2\|_2 \leq \left( \| \tilde{v}_1\|_\infty + \frac{|\lambda_1\lambda_2-1|}{|\lambda_2+\lambda|} \| \tilde{v}_2\|_\infty\right) \|u_1\|_2.
\end{align}
Since $\|u_1\|_2 >0$, we get 
\begin{align*}
     |1+\lambda|\leq  \| \tilde{v}_1\|_\infty + \frac{|\lambda_1\lambda_2-1|}{|\lambda_2+\lambda|} \| \tilde{v}_2\|_\infty.
\end{align*}
But now, given $\mathbf{u} = (u_1, u_2) \in \mathcal{H}$ and $\hat{\mathbf{{u}}}$ the Fourier transform of ${\mathbf{{u}}}$, we have that 
\begin{align*}
    \| u_1\|_\infty + \| u_2\|_\infty \leq \|\hat{{\mathbf{{u}}}}\|_{L^1(\R^2)\times L^1(\R^2)}\leq \sup_{\xi \in \R^2} \|l(\xi)^{-1}\|_2 \|l \hat{{\mathbf{{u}}}}\|_2 = \kappa \|{\mathbf{u}}\|_\mathcal{H}
\end{align*}
where
\[
\kappa \bydef  \sup_{\xi \in \R^2} \|l(\xi)^{-1}\|_2 
\]
 and where we used Parseval's identity for the last step. But now, we define $v_1 \bydef 2u_{0,1}u_{0,2} + 2 u_{0,1} - 3\lambda_1 u_{0,1}$ and $v_2 \bydef u_{0,1}^2$. Using \eqref{eq : u0 for gray scott}, we have that 
 \small{
\begin{align*}
    \|\tilde{v}_1 - v_1\|_\infty &\leq 2 \|u_{0,1}\|_\infty \|u_{0,2}-\tilde{u}_2\|_\infty + 2 \|\tilde{u}_1-u_{0,1}\|_\infty (1+\|\tilde{u}_2\|_\infty) + 3 \lambda_1 \|\tilde{u}_1 - u_{0,1}\|_\infty\|\tilde{u}_1 + u_{0,1}\|_\infty \\
    &\leq \alpha_1\|\tilde{u}_1-u_{0,1}\|_\infty + \alpha_2\|\tilde{u}_2-u_{0,2}\|_\infty
\end{align*}
}\normalsize
where 
\begin{align*}
    \alpha_1 \bydef 2(1+ \|u_{0,2}\|_\infty + \kappa r_0) + 3 \lambda_1(2\|u_{0,1}\|_\infty+ \kappa r_0) ~~ \text{ and } ~~ \alpha_2 \bydef 2\|u_{0,1}\|_\infty. 
\end{align*}
Similarly, we get
\begin{align*}
    \|\tilde{v}_2 - v_2\|_\infty \leq (\|u_{0,1}\|_\infty + \kappa r_0)\|\tilde{u}_1-u_{0,1}\|_\infty.
\end{align*}
Combining the above, we have 
\begin{align*}
    &\| \tilde{v}_1 - v_1\|_\infty + \frac{|\lambda_1\lambda_2-1|}{|\lambda_2+\lambda|} \| \tilde{v}_2 - v_2\|_\infty\\
    \leq & ~\alpha_1\|\tilde{u}_1-u_{0,1}\|_\infty + \alpha_2\|\tilde{u}_2-u_{0,2}\|_\infty + \frac{|\lambda_1\lambda_2-1|}{|\lambda_2+\lambda|} (\|u_{0,1}\|_\infty + \kappa r_0)\|\tilde{u}_1-u_{0,1}\|_\infty\\
    \leq & ~ \max\left\{\alpha_2, ~ \alpha_1 + \frac{|\lambda_1\lambda_2-1|}{|\lambda_2+\lambda|} (\|u_{0,1}\|_\infty + \kappa r_0)\right\} (\|\tilde{u}_1-u_{0,1}\|_\infty + \|\tilde{u}_2-u_{0,2}\|_\infty)\\
    \leq &~ \max\left\{\alpha_2, ~ \alpha_1 + \frac{|\lambda_1\lambda_2-1|}{|\lambda_2+\lambda|} (\|u_{0,1}\|_\infty + \kappa r_0)\right\} \kappa r_0.
\end{align*}
This implies that we can choose $t < -\lambda_{max}$, where $\lambda_{max} >0$ is chosen big enough such that
\begin{align*}
    |1+\lambda| > \| v_1\|_\infty + \frac{|\lambda_1\lambda_2-1|}{|\lambda_2+\lambda|} \|v_2\|_\infty + \max\left\{\alpha_2, ~ \alpha_1 + \frac{|\lambda_1\lambda_2-1|}{|\lambda_2+\lambda|} (\|u_{0,1}\|_\infty + \kappa r_0)\right\} \kappa r_0 
\end{align*}
for all $\lambda > \lambda_{max}$.

Similarly as for the proof of Theorem \ref{th : unstable square solution}, the above reasoning will be useful when computing the bounds $\mathcal{C}_1$ and $\mathcal{C}_2$. In particular, the computation of the bounds of Lemma \ref{lem : link of spectrum unbounded to U0} and \ref{lem : link of spectrum Fourier coeff in q} follows the steps of the proof of Theorem \ref{th : unstable square solution} and can be computed explicitly thanks to the analysis of Section 4 in \cite{cadiot20242dgrayscottequationsconstructive}. We implement such bounds in \cite{julia_cadiot} and, applying Theorem \ref{th : gershgorin unbounded}, we get the following.

We obtain that $D\mathbb{F}_{cc}(\tmu)$ possesses an eigenvalue with positive real part $\nu_1$ such that $Re(\nu_1) \in [1.056, 1.061]$ and the rest of its spectrum has a negative real part. $D\mathbb{F}_{cs}(\tmu)$  possesses an eigenvalue  $\nu_2$ such that $\nu_2 \in \{z \in \mathbb{C},~ |Re(z)| \leq 0.0036 \text{ and } |Im(z)| \leq 0.0036\}$ and the rest of its spectrum has a negative real part.  $D\mathbb{F}_{sc}(\tmu)$ has the same spectrum as  $D\mathbb{F}_{cs}(\tmu)$ since $\tmu$ is $D_4$-symmetric. In particular, it has an eigenvalue $\nu_3 \in \{z \in \mathbb{C}, ~ |Re(z)| \leq 0.0036 \text{ and } |Im(z)| \leq 0.0036\}$. Finally, $D\mathbb{F}_{ss}(\tmu)$ only contains eigenvalues with a negative real part. 

First, we proved that $D\mathbb{F}_{ss}(\tmu)$ only contains eigenvalues with a negative real part, which implies that $Ker(D\mathbb{F}_{ss}(\tmu)) = \{0\}$. Using Proposition 2.1 in \cite{cadiot20242dgrayscottequationsconstructive}, we have that $\tmu \in H^\infty(\R^2)\times H^\infty(\R^2)$ and Theorem 6.2 in \cite{cadiot20242dgrayscottequationsconstructive} provides that $\tmu \neq 0$.  We can apply Lemma \ref{lem : radially symmetric} and we obtain that $\tmu$ is radially symmetric. Moreover, using Lemma \ref{lem : radially symmetric} again, we have that the kernel of $D\mathbb{F}(\tmu)$ is at least of dimension 2. This implies that $\nu_2 = \nu_3 =0$. The nonlinear unstability is a direct consequence of \cite{henry_semilinear}.
\end{proof}

\bibliographystyle{abbrv}
\bibliography{biblio}

\end{document}